\newtheorem{theorem}{Theorem}[section]
\newtheorem{corollary}[theorem]{Corollary}
\newtheorem{lemma}[theorem]{Lemma}
\newtheorem{question}[theorem]{Question}
\newtheorem{construction}[theorem]{Construction}
\theoremstyle{definition}
\newtheorem{example}[theorem]{Example}
\theoremstyle{remark}
\newtheorem*{remark}{Remark}
\DeclareMathOperator{\Aut}{Aut}
\DeclareMathOperator{\Br}{Br}
\DeclareMathOperator{\characteristic}{char}
\DeclareMathOperator{\Gal}{Gal}
\DeclareMathOperator{\Hom}{Hom}
\DeclareMathOperator{\Isom}{Isom}
\DeclareMathOperator{\Jac}{Jac}
\DeclareMathOperator{\PGL}{PGL}
\DeclareMathOperator{\SL}{SL}
\DeclareMathOperator{\Spec}{Spec}
\DeclareMathOperator{\Stab}{Stab}
\DeclareMathOperator{\Tr}{Tr}
\DeclareMathOperator{\Tw}{Tw}
\newcommand{\indexin}[2]{[#1\,{:}\,#2]}
\newcommand{\col}{\,{:}\,}
\newcommand{\BC}{{\mathbb{C}}}
\newcommand{\F}{{\mathbb{F}}}
\newcommand{\PP}{{\mathbb{P}}}
\newcommand{\Q}{{\mathbb{Q}}}
\newcommand{\BR}{{\mathbb{R}}}
\newcommand{\Z}{{\mathbb{Z}}}
\newcommand{\no}{\hbox to 5em{\hfil\textup{no}\hfil}}
\newcommand{\yes}{\hbox to 5em{\hfil\textup{yes}\hfil}}
\renewcommand{\hat}{\widehat}
\renewcommand{\tilde}{\widetilde}
\newcommand\lowtilde{\lower0.7ex\hbox{~}}
\newcommand{\mybar}[1]{#1\llap{$\overline{\phantom{\rm#1}}$}}
\renewcommand{\bar}{\mybar}
\newcommand{\Kb}{\bar{K}}
\newcommand{\Gb}{\bar{G}}
\newcommand{\Fbar}{\overline{\mathbb{F}}}
\newcommand{\Kbar}{{\bar{\scriptstyle K}}}
\begin{document}

\title[Nonisomorphic curves]
{Nonisomorphic curves that become isomorphic
over extensions of coprime degrees}

\author[Goldstein]{Daniel Goldstein}           
\address{Center for Communications Research,
         4320 Westerra Court,
         San Diego, CA 92121-1967, USA.}
\email{dgoldste@ccrwest.org}

\author[Guralnick]{\hbox{Robert M.~Guralnick}}    
\address{Department of Mathematics, 
         University of Southern California, 
         Los Angeles, CA 90089-2532, USA.}
\email{guralnic@usc.edu}

\author[Howe]{\hbox{Everett W.~Howe}}
\address{Center for Communications Research,
         4320 Westerra Court,
         San Diego, CA 92121-1967, USA.}
\email{however@alumni.caltech.edu}
\urladdr{http://www.alumni.caltech.edu/\lowtilde{}however/}

\author[Zieve]{\hbox{Michael E.~Zieve}}
\address{Center for Communications Research,
         805 Bunn Drive,
         Princeton, NJ 08540-1966, USA.}
\email{zieve@math.rutgers.edu}
\urladdr{http://www.math.rutgers.edu/\lowtilde{}zieve/}

\date{7 May 2008}

\keywords{Curve, automorphism, twist}
\subjclass[2000]{Primary 14H37; Secondary 14H25, 14H45}

\begin{abstract}
We show that one can find two nonisomorphic
curves over a field $K$ that become isomorphic
to one another over two finite extensions of $K$
whose degrees over $K$ are coprime to one another.

More specifically, 
let $K_0$ be an arbitrary prime field and let
$r>1$ and $s>1$ be integers that are coprime 
to one another.  We show that one can find a 
finite extension $K$ of $K_0$,  a degree-$r$ 
extension $L$ of~$K$, a degree-$s$ extension 
$M$ of~$K$, and two curves $C$ and $D$ over $K$
such that $C$ and $D$ become isomorphic to one
another over $L$ and over~$M$, but not over
any proper subextensions of $L/K$ or~$M/K$.

We show that such $C$ and $D$ can never have 
genus~$0$, and that if $K$ is finite, $C$ 
and $D$ can have genus $1$ if and only if 
$\{r,s\} = \{2,3\}$ and $K$ is an odd-degree
extension of~$\F_3$.  On the other hand, when
$\{r,s\}=\{2,3\}$ we show that genus-$2$ examples
occur in every characteristic other than~$3$.

Our detailed analysis of the case $\{r,s\} = \{2,3\}$
shows that over every finite field $K$
there exist nonisomorphic curves $C$ and $D$
that become isomorphic to one another over the
quadratic and cubic extensions of~$K$.

Most of our proofs rely on Galois cohomology.
Without using Galois cohomology, we show that
two nonisomorphic genus-$0$ curves over an
arbitrary field remain nonisomorphic over 
every odd-degree extension of the base field.
\end{abstract}

\maketitle

\section{Introduction}
\label{S:intro}
Suppose $C$ is a curve over a field $K$, and suppose $L$ is an extension
field of~$K$.  An \emph{$L$-twist} of $C$ is a curve $D$ over $K$ that
becomes isomorphic to $C$ when the base field is extended to~$L$,
and a \emph{twist} of $C$ is a curve $D$ over $K$ that becomes isomorphic
to $C$ over \emph{some} field extension.
The simplest version of the question we address in this paper is the following:
\begin{question}
\label{Q:oldmain}
Suppose $C$ is a curve over a field $K$, and suppose $L$ and $M$ are
finite extensions of $K$ whose degrees over $K$ are coprime to one
another. If $D$ is a curve over $K$ that is simultaneously an
$L$-twist of $C$ and an $M$-twist of $C$, must $D$ be isomorphic to
$C$ over~$K$\/\textup{?}
\end{question}
There are a number of published papers that consider a generalization of 
Question~\ref{Q:oldmain} (see the discussion in~\S\ref{S:related}),
but we have not found any papers specifically addressing
Question~\ref{Q:oldmain} itself.  We will show that the answer
to the question is `no'.
   
We actually answer a more refined question.
Suppose $D$ is a twist of $C$.  We say that an extension $L$ of $K$ is a 
\emph{minimal isomorphism extension}
for $C$ and $D$ if $C$ and $D$ become isomorphic to one another over
$L$ but not over any proper subextension of~$L/K$.
\begin{question}
\label{Q:main}
Suppose $r>1$ and $s>1$ are integers that are coprime to one another.
Does there exist a curve $C$ over a field $K$, a twist $D$ of $C$,
and a pair of extensions $L$ and $M$ of $K$ of degrees $r$ and $s$, 
respectively, such that $L$ and $M$ are both minimal isomorphism
extensions for $C$ and~$D$\/\textup{?}
\end{question}

The answer to Question~\ref{Q:main} is `yes', as the following theorem shows.

\begin{theorem}
\label{T:main}
Let $K_0$ be a prime field and let $r>1$ and $s>1$ be integers whose 
greatest common divisor is~$1$.  Then there exist curves $C$ and $D$
over a finite extension $K$ of $K_0$
that are twists of one another and that have
minimal isomorphism extensions of degrees $r$ and $s$ over $K$.
\end{theorem}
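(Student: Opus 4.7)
The plan is to recast the question cohomologically and then build an explicit twisting class. For a curve $C$ over $K$ with geometric automorphism group $A:=\Aut(C_{\Kb})$, the $K$-twists of $C$ are in bijection with the pointed set $H^1(\Gal(\Kb/K),A)$, and a twist $D$ corresponding to a class $[\xi]$ becomes isomorphic to $C$ over a subfield $F\subseteq\Kb$ precisely when $[\xi]$ restricts trivially to $\Gal(\Kb/F)$. My task thus reduces to producing a class $[\xi]$ whose restriction is trivial over $L$ and over $M$ but non-trivial over every proper subextension of $L/K$ and of $M/K$.

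A corestriction argument immediately rules out abelian coefficients: if $A$ is abelian, then $\mathrm{cor}\circ\mathrm{res}$ on $H^1$ is multiplication by the relevant index, so triviality of $[\xi]$ over $L$ and over $M$ would force $r\cdot[\xi]=s\cdot[\xi]=0$ and hence $[\xi]=0$ since $\gcd(r,s)=1$. Any example must therefore come from a curve with non-abelian geometric automorphism group, which in general forces genus at least~$2$.

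The construction then proceeds in three stages. First, I choose a finite group $G$ with subgroups $H_L$ and $H_M$ of indices $r$ and $s$; the simplest choice is $G=\Z/rs\Z$ with $H_L$ of order $s$ and $H_M$ of order $r$, so that $H_L\cap H_M=1$ and $H_LH_M=G$. Second, I construct a non-abelian group $A$ equipped with a $G$-action by automorphisms, together with a cocycle $\xi$ whose class vanishes on restriction to $H_L$ and to $H_M$ but is non-trivial on every subgroup of $G$ strictly containing $H_L$ or strictly containing $H_M$ (these correspond exactly to the forbidden proper subextensions of $L/K$ and $M/K$). Third, I realize this data geometrically: produce $G$ as $\Gal(N/K)$ for some finite extension $K$ of $K_0$ (trivial when $K_0=\F_p$, and accessible via cyclotomic constructions when $K_0=\Q$), and construct a curve $C$ over $K$ whose geometric automorphism group contains $A$ equivariantly for the prescribed $G$-action; this uses the flexibility that any finite group arises as the automorphism group of some curve, combined with a descent to install the required Galois action on $A$. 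The twist of $C$ by $\xi$ is then the desired curve $D$.

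The main obstacle is the second stage: designing $A$ and $\xi$ so that every minimality condition is met. When $r$ and $s$ are both prime this is considerably easier, since there are no proper intermediate subextensions and one need only ensure $[\xi]\neq 1$ together with $[\xi]|_{H_L}=[\xi]|_{H_M}=1$. For general coprime $r$ and $s$ the minimality requirement imposes non-triviality at every intermediate level of $L/K$ and $M/K$, which is the delicate group-theoretic content; keeping $A$ small enough to be realizable as an automorphism group over a controlled extension of $K_0$ is the final technical hurdle.
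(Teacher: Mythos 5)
Your cohomological framework matches the paper's strategy, and the corestriction observation is a nice way to see that abelian automorphism groups cannot work. But the proposal stops precisely where the proof actually begins. You correctly identify stages 2 and 3 as the substance of the argument (\emph{``the delicate group-theoretic content''} and \emph{``the final technical hurdle''}), and you do not carry either of them out. That is the gap.

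For stage 2, the paper's key idea is to arrange for the Galois action on $A$ to be \emph{trivial}, so that $H^1(\Gal(\Kb/K),A)$ becomes the set of conjugacy classes of $A$ and restriction to the degree-$n$ subextension sends the class of $g$ to the class of $g^n$ (Lemma~\ref{L:cohomology}). Your recipe of constructing a $G$-action together with a cocycle $\xi$ vanishing on $H_L$ and $H_M$ is more general than needed and correspondingly harder; the trivial-action reduction turns the whole problem into a concrete question about power maps and conjugacy in a finite group. The explicit answer the paper gives is the dihedral group $A = D_{4rs}$ (with $u^{2rs}=v^2=1$, $vuv=u^{-1}$), taking $x=u$ and $y=u^m$ where $m\equiv1\pmod r$ and $m\equiv-1\pmod{2s}$: then $x^i\sim y^i$ iff $r\mid i$ or $s\mid i$. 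Nothing in your proposal produces such a pair $(x,y)$ or even a candidate group, and finding it is genuinely the crux. (Incidentally, the paper also proves in Theorem~\ref{T:2rs} that $\#A$ must strictly exceed $rs$, so there is a real lower bound your $A$ must clear.)

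For stage 3, appealing to ``any finite group arises as the automorphism group of some curve'' gives a curve over $\Fbar_p$, not over a controlled finite field; the paper has to enlarge the base field until all geometric automorphisms become rational. That is fine over $\F_p$, but over $\Q$ the Madden--Valentini approach does not hand you a curve over a number field with prescribed Galois action on its automorphisms, and your ``accessible via cyclotomic constructions'' is not an argument. The paper instead handles $\Q$ (and the bulk of the characteristic-$p$ cases) by writing down explicit hyperelliptic and Artin--Schreier curves (Constructions~\ref{Con:prsodd}, \ref{Con:prseven}, \ref{Con:ps}) and verifying the minimal-isomorphism-extension conditions directly. Finally, your inference from ``non-abelian $A$'' to ``genus at least 2'' is not quite right: $\Aut\PP^1=\PGL_2$ is non-abelian, and the genus-$0$ exclusion in the paper requires a separate argument (Springer's theorem); similarly the $j=0$ and $j=1728$ elliptic curves have non-abelian automorphism groups, and genus~$1$ examples do exist in one special case (Theorem~\ref{T:genus1}).
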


In Section~\ref{S:proofs} we provide two proofs of this theorem for the 
special case where $K_0$ is finite.  The second proof shows
that we may take $K$ to be any even-degree extension of~$K_0$.
We can also write down very explicit examples of such curves $C$ and $D$
in many special cases.  In particular, 
when $K_0=\Q$ we can write down explicit examples for every $r$ and $s$, thus
completing the proof of Theorem~\ref{T:main}.  
We note that when $r$ and $s$ are both prime, we can write down explicit
examples over fields of every characteristic.

We also show that over finite fields, curves satisfying the conclusion of
Theorem~\ref{T:main} must have large geometric automorphism groups.
 
\begin{theorem}
\label{T:automorphism}
Let $K_0$, $r$, $s$, $C$, and $D$ be as in Theorem~\textup{\ref{T:main}},
and suppose that $K_0$ is finite.  Then the geometric automorphism 
group of $C$ \textup{(}and of $D$\textup{)}  contains a finite subgroup of
order divisible by $rs$, but not equal to $rs$.  Furthermore, if
$rs \equiv 2\bmod 4$ then this automorphism group contains a finite 
subgroup whose order is divisible by~$2rs$.
\end{theorem}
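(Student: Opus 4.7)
The plan is to argue via Galois cohomology, using that $\Gal(\bar K/K)\cong\hat{\Z}$ for $K$ finite, topologically generated by Frobenius $\phi$. Setting $A=\Aut_{\bar K}(C)$, the twist $D$ corresponds to a class in $H^1(\hat{\Z},A)$ represented by a cocycle $\phi\mapsto h$ for some $h\in A$; the condition $D\cong_{K_n}C$ is equivalent to the iterated norm $N_n(h):=h\cdot\phi(h)\cdots\phi^{n-1}(h)$ being of the form $b^{-1}\phi^n(b)$ for some $b\in A$. Modifying $h$ within its cohomology class I may take $N_r(h)=1$; then $D\cong_{K_s}C$ provides $w\in A$ with $N_s(h)=w^{-1}\phi^s(w)$. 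The minimality conditions translate to $N_d(h)$ being a non-coboundary for every proper divisor $d$ of $r$ or $s$; in particular, for $d=1$, $h$ itself is not a coboundary.

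Next I pass to the semidirect product $\Gamma=A\rtimes\hat{\Z}$ and set $\psi=h\phi$, so that $\psi^r=\phi^r$ and $w\psi^sw^{-1}=\phi^s$. A direct computation — using that $N_{rs}(h)=1$ telescopes from $N_r(h)=1$, and applying $\phi^{rs}$ to the identity $\phi^s(w)=w\cdot N_s(h)$ — shows that $h,w\in A^{\phi^{rs}}=\Aut_{K_{rs}}(C)$, which is finite because $K_{rs}$ is a finite field (even for $g(C)\in\{0,1\}$, where this group is $\PGL_2(K_{rs})$ or $C(K_{rs})\rtimes\Aut(C,O)$ respectively). The candidate subgroup is $B\subseteq A$ generated by all $\phi$-conjugates of $h$ and $w$, a finite subgroup of $\Aut_{K_{rs}}(C)$. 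To establish $rs\mid|B|$, I would study the finite subgroup $H=\langle\psi,\phi,w\rangle$ of $B\rtimes(\Z/rs\Z)$ (this semidirect product is well-defined since $\phi^{rs}$ acts trivially on $B$), observing that $\psi$ and $\phi$ both have order exactly $rs$ there, and use the $\phi$-action together with the cocycle relations to extract elements of $B$ whose combined orders yield the factor $rs$.

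The main obstacle is the strict inequality $|B|\ne rs$, together with the stronger conclusion $2rs\mid|B|$ when $rs\equiv 2\pmod 4$. Supposing $|B|=rs$: since $\gcd(r,s)=1$, the group $B$ has a unique Hall subgroup of each of orders $r$ and $s$, giving $B$ a rigid structure. Tracing the relations $N_r(h)=1$ and $N_s(h)=w^{-1}\phi^s(w)$ through this rigid structure together with the $\phi$-action on $B$ should produce $b\in A$ with $h=b^{-1}\phi(b)$, contradicting minimality at $d=1$. For the mod-$4$ refinement, assume $r=2$ and $s$ is odd; the relation $N_2(h)=h\phi(h)=1$ gives $\phi(h)=h^{-1}$, and combining this inversion symmetry with the order-$s$ piece of $B$ constructed above forces an additional involution, producing a subgroup of order $2rs$. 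The most delicate part is the coboundary-extraction argument for the strict inequality, which must simultaneously use the minimality assumptions at all proper divisors of $r$ and $s$ and proceeds in a possibly non-abelian setting; this is where I expect the bulk of the technical work.
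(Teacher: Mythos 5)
Your framework is the right one and is essentially the paper's: pass from the cocycle description of the twist $D$ to conjugacy of $\psi^m=(h\phi)^m$ and $\phi^m$ in a semidirect product, and translate the ``minimal isomorphism extension'' conditions into conjugacy/nonconjugacy of $m$-th powers. Your construction of the finite subgroup $B$ is also sound --- the $\phi^{rs}$-invariance you assert does hold: from $N_{rs}(h)=1$, two ways of factoring $N_{rs+1}(h)$ give $\phi^{rs}(h)=h$, and iterating $\phi^s(w)=w\,N_s(h)$ likewise gives $\phi^{rs}(w)=w$. This makes $B$ a $\phi$-stable finite subgroup on which the required conjugacy and nonconjugacy relations hold (the positive ones because $w\in B$, the negative ones a fortiori since $B\subseteq A$). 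The paper instead reduces to the finite-group case by first eliminating genus $0$ (Theorem~\ref{T:genus0}) and handling genus~$1$ via $\Aut(E,O)$; your $B$-construction is a reasonable uniform alternative.

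The genuine gaps are exactly where you say you expect technical work, and they are substantial. \emph{Divisibility $rs\mid\#B$:} this is the paper's Lemma~\ref{L:conjugacy}, and its proof is not a matter of ``extracting elements whose orders combine'' --- it proceeds by reducing to $r$ a prime power, centralizing $X^r$, splitting elements into $p$-part and $p'$-part, passing to a Sylow $p$-subgroup, and then inducting on $\#A$ by modding out a central order-$p$ subgroup. None of that is present in your sketch. \emph{Strict inequality $\#B\neq rs$:} the paper's argument supposes $\#G=rs$ and invokes a nontrivial external result (Horo\v{s}evski\u{\i}'s theorem that an automorphism of a nontrivial finite group has order strictly less than the group), then uses $X^m$ central and a gcd argument at $r$ and $s$ to contradict minimality at a proper divisor of $r$. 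Your Hall-subgroup sketch gestures at rigidity but has no route to the contradiction; there is no reason yet to expect it to produce a coboundary relation at $d=1$. \emph{The mod-$4$ refinement:} the paper's Lemma~\ref{L:conjugacy2} uses the odd-order characteristic subgroup $O\trianglelefteq G$ with $G/O\cong C_2$, shows $Y=gX$ with $g\in O$, and reduces to the case $G'=O$ to force $4\mid\#G$; your ``inversion symmetry forces an additional involution'' is not an argument. In short: the setup and the $B$-construction are correct, but all three conclusions of the theorem rest on lemmas you have not proved and whose actual proofs involve techniques (Sylow/$p$-group induction, Horo\v{s}evski\u{\i}'s bound, the odd-order subgroup reduction) absent from your sketch.
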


We actually prove a slightly stronger statement; see
Theorem~\ref{T:automorphism-plus-galois}. 

In addition, we take a closer look at the possibilities 
when $C$ and $D$ have small genus.   When $K_0$ is finite,
the curves that occur in Theorem~\ref{T:main} can never have genus~$0$, 
because over a finite field all twists of a genus-$0$ curve are trivial.
But even over an arbitrary field, genus-$0$ examples do not exist.

\begin{theorem}
\label{T:genus0}
Suppose $C$ and $D$ are curves of genus $0$ over a field $K$ that become
isomorphic to one another over an odd-degree extension of $K$.  Then
$C\cong D$.
\end{theorem}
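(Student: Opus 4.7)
The plan is to reduce this to Springer's theorem on quadratic forms, which says that two quadratic forms over $K$ that become isometric over an odd-degree extension $L/K$ are already isometric over $K$.

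First, realize $C$ and $D$ as smooth plane conics via the anticanonical embedding: for a genus-$0$ curve the invertible sheaf $\omega^{-1}$ has degree $2$ and three global sections, so the complete linear system embeds the curve into $\PP^2_K$ as the zero locus of a non-degenerate ternary quadratic form. Let $q_C$ and $q_D$ be the resulting forms, each well-defined up to $K^*$-scaling and the action of $\mathrm{GL}_3(K)$. Because the anticanonical embedding is intrinsic, any $K$-isomorphism $C\to D$ extends to a projective linear automorphism of $\PP^2$ carrying one conic to the other; hence $C\cong D$ over $K$ is equivalent to $q_C$ and $q_D$ being \emph{similar}, i.e.\ $q_C\cong \lambda q_D$ for some $\lambda\in K^*$.

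Next I would normalize: scaling a ternary form by $c$ multiplies its discriminant by $c^3\equiv c$ modulo squares, so after rescaling $q_D$ we may assume that $q_C$ and $q_D$ have equal discriminants in $K^*/(K^*)^2$, and then similarity is the same as isometry. The hypothesis $C_L\cong D_L$ gives $(q_C)_L\cong \mu (q_D)_L$ for some $\mu\in L^*$; comparing discriminants (both inherited from $K$) forces $\mu\in (L^*)^2$, so $(q_C)_L$ and $(q_D)_L$ are in fact isometric over $L$. Springer's theorem then yields $q_C\cong q_D$ over $K$, whence $C\cong D$.

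The main technical obstacle will be characteristic $2$: the bilinear form attached to a ternary quadratic form is then alternating with vanishing determinant, and the usual discriminant must be replaced by a suitable substitute such as an Arf invariant of a complementary even-dimensional subform. Springer's theorem nonetheless remains valid for quadratic forms in all characteristics, and the anticanonical model of a genus-$0$ curve is still a smooth plane conic in characteristic $2$, so the argument should go through once the correct char-$2$ notions of regularity and discriminant are in place.
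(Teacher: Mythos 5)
Your proposal takes a genuinely different route from the paper's: it is essentially the ``short proof'' via ternary quadratic forms (equivalently, quaternion algebras or $2$-torsion in the Brauer group) that the paper explicitly sketches at the beginning of the section and then deliberately sets aside. The crucial difference in input is that you invoke what is really the \emph{isometry} form of Springer's theorem (``two forms isometric over an odd-degree extension are already isometric over $K$''), which needs Witt decomposition and Witt cancellation on top of the isotropy statement. The paper, by contrast, uses only the isotropy statement --- proved there by the classical least-degree polynomial argument, which works in every characteristic and for arbitrary, possibly inseparable, odd-degree extensions --- and replaces the structure theory of quadratic forms with a direct geometric computation: it identifies a subvariety of $\Hom(C,D)$ cut out by the conditions $\psi(P)=Q$, $\psi(P^\sigma)=Q^\sigma$ (for a conjugate pair of quadratic points on $C$ and $D$) with a twice-punctured genus-$0$ curve $W$, observes that $W$ acquires a point over $L$, feeds $W$ itself back into the Springer corollary to get a $K$-point on $W$ and hence on $\Isom(C,D)$. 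What the paper's route buys is economy of machinery and true uniformity in the characteristic; what yours buys is brevity, at the cost of importing the Witt theory of quadratic forms.

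Two cautions on your version. First, ``Springer's theorem then yields $q_C\cong q_D$'' is not a direct application of Springer: you should note that $q_C\perp(-q_D)$ is hyperbolic over $L$, that its anisotropic kernel over $K$ remains anisotropic over $L$ by Springer and hence must be zero, and then invoke Witt cancellation. Second --- and this is a genuine gap rather than a cosmetic one --- the discriminant-normalization step has no direct analogue in characteristic $2$: the bilinear form associated to a ternary quadratic form is alternating with a nontrivial radical, so the usual discriminant vanishes identically. You would have to replace it by the Arf invariant of the nondegenerate two-dimensional complement, check that scaling still lets you match it, and cite the characteristic-$2$ Witt theory for semiregular quadratic forms (including the appropriate cancellation theorem). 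None of that is false, but none of it is in your writeup, and characteristic $2$ is exactly where the paper's chosen route earns its keep: the paper's proof of Springer's lemma and the geometric argument for the theorem apply without modification there.
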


(Note that this theorem would be false without the restriction to odd-degree
extensions; the genus-$0$ curve $x^2 + y^2 = -1$ over $\BR$ is a 
nontrivial quadratic twist of the projective line over~$\BR$.)

\begin{corollary}
\label{C:genus0}
For every $r$ and $s$, the answer to Question~\textup{{\ref{Q:main}}} is `no' 
if the curve $C$ is required to have genus~$0$.
\end{corollary}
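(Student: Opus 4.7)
The corollary should follow at once from Theorem~\ref{T:genus0}, so my plan is to simply observe how the parity condition in that theorem is triggered by the coprimality hypothesis in Question~\ref{Q:main}.

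First I would note that since $r>1$ and $s>1$ are coprime, they cannot both be even, so at least one of them—say $r$, without loss of generality—is odd. Suppose, toward a contradiction, that there exist a field $K$, a genus-$0$ curve $C$ over $K$, a twist $D$ of $C$, and extensions $L/K$ and $M/K$ of degrees $r$ and $s$ respectively, both of which are minimal isomorphism extensions for $C$ and $D$.

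Since $D$ becomes isomorphic to $C$ over $L$, and $C$ has genus $0$, the curve $D$ also has genus~$0$ (genus is preserved under base change). Now $L/K$ has odd degree $r$, so Theorem~\ref{T:genus0} applies and forces $C \cong D$ already over $K$. But then $C$ and $D$ become isomorphic over the trivial extension $K/K$, which is a proper subextension of $L/K$ (since $r>1$), contradicting the assumption that $L$ is a \emph{minimal} isomorphism extension.

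There is no substantive obstacle here beyond invoking Theorem~\ref{T:genus0}; the only subtlety worth flagging is the asymmetric use of the hypotheses on $r$ and $s$, namely that we only need one of the two extensions to have odd degree in order to kill the example, which is automatic from $\gcd(r,s)=1$ together with $r,s>1$.
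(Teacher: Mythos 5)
Your proof is correct and is exactly the argument the paper intends: the corollary is stated with no written proof precisely because it follows immediately from Theorem~\ref{T:genus0} once one observes that two coprime integers both greater than~$1$ cannot both be even, so one of the putative minimal isomorphism extensions has odd degree and the theorem forces $C\cong D$ over $K$, contradicting minimality. Your remark that only one of the two hypotheses on $r$ and $s$ is needed (via the automatic odd-degree extension) is the right observation to flag, and nothing further is required.
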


We also show that over finite fields examples of genus-$1$ curves as in 
Theorem~\ref{T:main} occur only in a very special case.

\begin{theorem}
\label{T:genus1}
Suppose that $C$ and $D$ are nonisomorphic
curves of genus $1$ over a finite field $K$ such that $C$ and $D$ have
minimal isomorphism extensions of degrees $r$ and $s$, where $r$ and $s$ are
coprime to one another.  Then $\{r,s\} = \{2,3\}$,
the field $K$ is an odd-degree extension of\/ $\F_3$, 
and $C$ and $D$ have supersingular Jacobians.
Conversely, for every odd-degree extension $K$ of\/ $\F_3$ there
are genus-$1$ curves $C$ and $D$ over $K$  that have
minimal isomorphism extensions of degrees $2$ and~$3$.
\end{theorem}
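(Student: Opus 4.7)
The plan is to prove necessity and sufficiency separately, using Theorem~\ref{T:automorphism-plus-galois} as the main tool.

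For necessity, I first note that over a finite field $K$, Lang's theorem forces every genus-$1$ curve to have a $K$-rational point (hence to be an elliptic curve) and makes $H^1(K,E)=0$ for any elliptic curve $E/K$. Consequently, via the exact sequence $1\to E\to \Aut_{\bar K}(E)\to A\to 1$, twists of $E$ inject naturally into $H^1(K,A)$ for $A=\Aut_{\bar K}(E,O)$, the finite geometric automorphism group of $E$ as an elliptic curve, and the minimal isomorphism extension of a twist is visible on its image in $H^1(K,A)$. Classically $|A|$ is $2$, $4$, $6$, $12$, or $24$ depending on the characteristic and on~$j$. I then dispose of each characteristic in turn. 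In characteristic $\neq 2,3$, Kummer theory together with the vanishing of the Brauer group of a finite field identifies $H^1(K,A)$ with $K^*/(K^*)^N$ for $N\in\{2,4,6\}$, and a direct divisibility analysis shows that no nontrivial class can simultaneously die in two coprime extensions of degrees $r,s\ge 2$. In characteristic $2$ with $j=0$ (where $|A|=24$), the corresponding non-abelian analysis similarly rules out all pairs. In characteristic $3$ with $j\ne 0$, $A=\{\pm 1\}$ admits no such class.

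The only remaining case is characteristic~$3$ with $j=0$, which is automatically supersingular, where $|A|=12$ and $A\cong \Z/3\rtimes\mu_4$. Frobenius acts trivially on the $\Z/3$ factor, but on $\mu_4$ it acts by inversion precisely when $[K\col\F_3]$ is odd. When $[K\col\F_3]$ is even, the action on $A$ is trivial and the same style of divisibility analysis applies, producing no valid class; so $[K\col\F_3]$ must be odd, and the analysis then forces $\{r,s\}=\{2,3\}$. For sufficiency, over each odd-degree extension $K$ of~$\F_3$, I exhibit the supersingular curve $E\col y^2=x^3-x$ together with an explicit twist $D$. The twist is constructed from a Galois $1$-cocycle valued in $A$, chosen so that its class in $H^1(K,A)$ restricts to zero over both $\F_{q^2}$ and $\F_{q^3}$ but to a nonzero element over every proper subextension of either, giving the required minimal isomorphism extension degrees $2$ and~$3$.

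The main obstacle is the non-abelian cohomological analysis in characteristic~$3$: because $A$ is non-abelian and Frobenius acts nontrivially on it when $[K\col\F_3]$ is odd, $H^1(K,A)$ is a non-abelian pointed set described by Frobenius-twisted conjugacy classes rather than a quotient group. Tracking the restriction maps $H^1(K,A)\to H^1(\F_{q^r},A)$ for $r\in\{2,3\}$ and identifying which classes have minimal killing degrees exactly $2$ and~$3$ requires a careful enumeration of these twisted conjugacy classes, and is where I expect the detailed bookkeeping to concentrate.
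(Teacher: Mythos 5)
Your overall structure — reducing genus-$1$ twists to elliptic-curve twists via the fact that every genus-$1$ curve over a finite field has a rational point, and then enumerating the possible finite automorphism groups $A$ by characteristic and $j$-invariant — is the same route the paper takes; the paper reaches the short list of candidate $(A,\text{char})$ pairs by first invoking its order constraint (Theorem~\ref{T:cocycles}) and then Silverman's table, while you reach it by enumerating $|A|\in\{2,4,6,12,24\}$ directly, but the case analysis that follows is essentially identical.

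However, your plan for the sufficiency direction contains a genuine error. You propose to take $C=E$ with $E\colon y^2=x^3-x$ and to construct $D$ from a cocycle in $H^1(K,A)$ that restricts to the trivial class over $\F_{q^2}$ and $\F_{q^3}$ but is nontrivial over $K$. No such cocycle exists. With $A\cong\Z/3\rtimes\Z/4$ and $\alpha$ of order $2$ (trivial on $\Z/3$, inversion on $\Z/4$), write $d$ for the value of the cocycle at Frobenius. Triviality of the restriction to $\F_{q^2}$ (where $\alpha^2=\mathrm{id}$, so classes are plain conjugacy classes) forces $d\,d^\alpha=1$, hence $d^\alpha=d^{-1}$. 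Then the restriction to $\F_{q^3}$ sends Frobenius to $d\,d^\alpha d^{\alpha^2}=d$, and triviality of this class in $H^1(\hat\Z,A,\alpha^3)=H^1(\hat\Z,A,\alpha)$ means $d=z^{-1}z^\alpha$ for some $z$ — which is precisely the condition that $[d]_\alpha$ is already trivial over $K$. So any $D$ satisfying your restriction conditions is $K$-isomorphic to $E$. (This is not a pathology of the non-abelian setting: the same restriction--corestriction argument kills the abelian case too, and you cannot sidestep it by working relative to the base point.) The paper avoids this by taking \emph{both} $C$ and $D$ to be nontrivial twists of $E$, namely $y^2=x^3-x-a$ and $y^2=x^3-x+a$ with $\Tr_{K/\F_3}(a)\ne 0$; their cocycles send Frobenius to mutually non-$\alpha$-twisted-conjugate translation automorphisms $t_\delta$ and $t_{-\delta}$ in the $\Z/3$ part, which become conjugate over $\F_{q^2}$ (via the order-$4$ element) and trivial over $\F_{q^3}$. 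You will need to repair your construction along these lines.

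Two smaller remarks. Your necessity argument is sound in outline, but the char-$2$, $j=0$ case (where $A\cong\SL_2(\F_3)$ and Frobenius may act nontrivially depending on $[K\col\F_2]$) and the elimination of $\{r,s\}=\{3,4\}$ in char $3$ (where $|A|=12=rs$ collides with the ``order strictly greater than $rs$'' constraint) are asserted rather than carried out; the paper handles the first by examining both the trivial- and nontrivial-Frobenius subcases and the second by a base-change trick. Also, the appeal to Lang's theorem for the existence of a rational point on a genus-$1$ curve over a finite field is fine and equivalent to the paper's appeal to the Weil bound.
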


On the other hand, when $\{r,s\} = \{2,3\}$ we can get genus-$2$ examples 
over every finite field whose characteristic is not~$3$.

\begin{theorem}
\label{T:genus2}
If $K$ is a finite field of characteristic not~$3$, then there 
exist genus-$2$ curves $C$ and $D$ over $K$
that have minimal isomorphism extensions of degrees $2$ and~$3$.
If $K$ is a finite field of characteristic~$3$, then no such
curves exist over $K$.
\end{theorem}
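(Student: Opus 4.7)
My plan is to prove the two statements separately.

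\textbf{Existence when $\characteristic(K) \neq 3$.}
I would proceed by exhibiting explicit genus-$2$ curves. In characteristic $\neq 2, 3$, a natural starting point is $C: y^2 = x^6 + ax^3 + b$, whose geometric automorphism group $A$ contains the hyperelliptic involution, the rotation $(x,y) \mapsto (\zeta_3 x, y)$, and, for suitable $a, b$, an involution exchanging $0$ with $\infty$, so that $|A|$ is divisible by $12$ (as must happen by Theorem~\ref{T:automorphism}). I would then use Galois cohomology to find a class $\xi \in H^1(\Gal(\bar K/K), A)$ corresponding to a twist $D$ whose restrictions to a quadratic extension $L$ and a cubic extension $M$ of $K$ both vanish but which itself is nontrivial. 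Over a finite field, $\Gal(\bar K/K) \cong \hat{\Z}$ is procyclic, so $\xi$ is determined by a single element $a \in A$ (up to twisted conjugation), and the triviality conditions over $L$ and $M$ become conditions on the norm elements $a\,\phi(a)$ and $a\,\phi(a)\,\phi^2(a)$, where $\phi$ denotes the Frobenius action on $A$. The task reduces to exhibiting $a$ with the right combinatorial properties, and then to verifying that no coincidental collapse causes the minimal extensions to be proper subextensions. For characteristic $2$, an analogous argument applies using a genus-$2$ model of the form $y^2 + h(x)\,y = f(x)$ with large automorphism group.

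\textbf{Nonexistence when $\characteristic(K) = 3$.}
Suppose for contradiction that such $C$ and $D$ exist over a finite field $K$ of characteristic $3$. Since $rs = 6 \equiv 2 \bmod 4$, Theorem~\ref{T:automorphism} forces $A = \Aut(C_{\bar K})$ to contain a finite subgroup of order divisible by $12$. I would then invoke the classification of geometric automorphism groups of genus-$2$ curves in characteristic $3$ (due to Igusa and refined in subsequent work) to enumerate the groups that can arise. For each such $A$, I would analyze the possible Frobenius actions and verify that no cohomology class in $H^1(\hat{\Z}, A)$ has minimal isomorphism extensions of both degree $2$ and degree $3$: that is, that no element $a \in A$ has $a\,\phi(a)$ and $a\,\phi(a)\,\phi^2(a)$ both coboundaries without $a$ itself being one. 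The stronger form of the automorphism theorem (the referenced Theorem~\ref{T:automorphism-plus-galois}) should further constrain the Galois-module structure of $A$, cutting down the case analysis.

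\textbf{Main obstacle.}
I expect the hardest step to be the characteristic-$3$ case analysis. In characteristic $3$, some genus-$2$ curves admit wild automorphisms of order $3$, producing geometric automorphism groups with no characteristic-$0$ analogue; enumerating these groups and carrying out the cohomological check in each case, while ruling out any accidental construction of the desired $(a, \phi)$, is where I expect the technical bulk of the work to lie. Finding a unified structural reason—perhaps that the only subgroups of $A$ of order $12$ in characteristic $3$ have a $3$-Sylow that is fixed by every Frobenius twist up to coboundary—would be the cleanest route, and is what I would look for first.
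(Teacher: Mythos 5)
Your existence sketch for $\characteristic K \neq 3$ is essentially the paper's route: explicit genus-$2$ curves with reduced automorphism group $D_6$ (a form $y^2 = f(x)$ admitting $x\mapsto n/x$ and an order-$3$ fractional-linear symmetry), followed by a cohomology computation in the order-$12$ group $\Aut X_{\Kbar}$. The paper is more careful about which characteristics and which quadratic residue conditions on $-2$ and $-3$ allow which reduced groups to occur, and needs a separate Artin--Schreier model in characteristic $2$, but your plan is recognizably the same one.

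The nonexistence argument in characteristic $3$ has a genuine gap, and the gap is not where you expect. Your proposed obstruction is group-theoretic: classify the geometric automorphism groups $G$ that can arise, and for each one check that no $a\in G$ has $a\,\phi(a)$ and $a\,\phi(a)\,\phi^2(a)$ coboundaries without $a$ itself being one. For the $S_4$ case this works, because $-2=1$ is always a square in characteristic $3$, so Frobenius acts trivially on $G$ and a direct conjugacy check ($u^2\sim v^2$ and $u^3\sim v^3 \Rightarrow u\sim v$) suffices. But for the $D_6$ case you hit $\#G=12=2rs$, and here the purely group-theoretic check \emph{does not fail}: for the nontrivial Frobenius action that twists the lift of the order-$2$ reduced automorphism by the hyperelliptic involution, the semidirect product $G\rtimes\langle\phi\rangle$ \emph{does} contain the required elements --- this is precisely what makes Example~\ref{X:D6} work in characteristics $\neq 3$. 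So no enumeration of Frobenius actions on the abstract group will rule $D_6$ out. The actual mechanism is geometric and characteristic-$3$-specific: Theorem~\ref{T:automorphism-plus-galois} says that if $\#G=2rs$ then no twist of $C$ can have all geometric automorphisms rational, and the paper derives a contradiction by exhibiting, for every $D_6$-curve over $\F_q$ in characteristic $3$, an $\F_q$-model with all twelve automorphisms rational --- namely $y^2 = (x^3-x)^2 - t^{1/3}$, with automorphisms $(x,y)\mapsto(\pm x+a,\pm y)$, $a\in\F_3$. The cube root $t^{1/3}\in\F_q$ exists because cubing is bijective on a perfect field of characteristic $3$, and this is exactly what cannot be done in the other characteristics. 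Your plan is missing this step entirely; you would need to realize that for $D_6$ the task is to produce a ``Galois-trivial'' twist rather than to show none of the cocycles work.

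One smaller point: your worry about wild automorphisms of order $3$ producing exotic automorphism groups is unfounded for genus $2$ in characteristic $3$. Igusa's list of reduced automorphism groups there is the tame one (trivial, $C_2$, $V_4$, $D_6$, $S_4$, $C_5$); the wildly-ramified genuinely new group $C_2^4\rtimes C_5$ appears only in characteristic $2$. The peculiarity of characteristic $3$ for genus $2$ lies in the moduli (no $D_{12}$ curve, and the cube-root rationality used above), not in the groups.
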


Combining Theorems~\ref{T:genus1} and~\ref{T:genus2} and the second
proof of Theorem~\ref{T:main} for finite fields, we obtain an interesting corollary:

\begin{corollary}
\label{C:allfield}
Over every finite field $K$, there exist nonisomorphic curves $C$ and 
$D$ that become isomorphic to one another over the quadratic and cubic 
extensions of $K$.
\end{corollary}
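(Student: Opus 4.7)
The plan is a three-case partition of finite fields, using the three cited results as black boxes so that together they cover every finite field $K$.

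First I would split on the characteristic of $K$. If $\characteristic K \ne 3$, then Theorem~\ref{T:genus2} applies directly: it hands us genus-$2$ curves $C$ and $D$ over $K$ whose minimal isomorphism extensions have degrees $2$ and~$3$. In particular, $C$ and $D$ are nonisomorphic over $K$, and they become isomorphic over both the quadratic and the cubic extension of~$K$. This disposes of all finite fields not of characteristic~$3$.

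Next I would handle $\characteristic K = 3$ by splitting on the degree $[K\col\F_3]$. If this degree is odd, then Theorem~\ref{T:genus1} provides nonisomorphic genus-$1$ curves $C$ and $D$ over $K$ with minimal isomorphism extensions of degrees $2$ and~$3$, which is exactly what the corollary requires. The remaining case is $\characteristic K = 3$ with $[K\col\F_3]$ even; here Theorem~\ref{T:genus2} explicitly fails and Theorem~\ref{T:genus1} does not apply, so neither genus $1$ nor genus $2$ is available. For this last case I would invoke the second proof of Theorem~\ref{T:main} for finite fields, which (as stated after the theorem) shows that with $\{r,s\}=\{2,3\}$ one may take $K$ to be any even-degree extension of $K_0=\F_3$; this yields curves $C$ and $D$ over $K$ of the desired type (with no genus restriction).

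Assembling the three cases — $\characteristic K \ne 3$; $\characteristic K = 3$ with odd residue degree; and $\characteristic K = 3$ with even residue degree — covers every finite field exactly once, and in each case we have produced nonisomorphic curves $C,D$ over $K$ that become isomorphic over the quadratic extension and over the cubic extension of~$K$. The only real content is bookkeeping: verifying that the three cited results do indeed tile the space of finite fields with no gap. The main obstacle, such as it is, is remembering that the even-degree characteristic-$3$ case falls outside the scope of both Theorems~\ref{T:genus1} and~\ref{T:genus2}, so the refinement of Theorem~\ref{T:main} recorded in the sentence following its statement is essential — without it, characteristic $3$ of even degree would be uncovered.
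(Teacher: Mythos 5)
Your three-case decomposition — characteristic $\ne 3$, odd-degree extension of $\F_3$, and even-degree extension of $\F_3$ — is precisely the decomposition the paper uses, invoking Theorem~\ref{T:genus2}, Theorem~\ref{T:genus1}, and the refinement of the second proof of Theorem~\ref{T:main} (that $K$ may be any even-degree extension of $K_0$) in exactly the same roles. The argument is correct and matches the paper's.
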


When $K_0$ is finite and $\{r,s\}\neq\{2,3\}$, examples of genus-$2$ curves as
in Theorem~\ref{T:main} are very special.

\begin{theorem}
\label{T:genus2bigger}
Let $r>1$ and $s>1$ be coprime integers with $\{r,s\} \neq \{2,3\}$.
Suppose that $C$ and $D$ are nonisomorphic curves
of genus $2$ over a finite field $K$ such that $C$ and $D$ have minimal
isomorphism extensions of degrees $r$ and $s$.
Then $\{r,s\} = \{2,5\}$, the field $K$ is an odd-degree extension 
of\/~$\F_5$, and there is an element $a$ of $K$ whose trace to\/ $\F_5$
is nonzero such that $C$ and $D$ are isomorphic to the curves
\begin{align*}
y^2 & = x^5 - x + a \text{\quad and}\\
y^2 & = x^5 - x + 2a,
\end{align*}
respectively.  Conversely, if $K$ is an odd-degree extension of\/ $\F_5$
and $a\in K$ has nonzero absolute trace, then the two curves given above
have minimal isomorphism extensions of degrees $2$ and $5$ over~$K$.
\end{theorem}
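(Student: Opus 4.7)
My plan is to combine Theorem~\ref{T:automorphism} with the classification of automorphism groups of genus-$2$ curves in each characteristic. Since $r, s > 1$ are coprime and $\{r, s\} \neq \{2, 3\}$, we have $rs \geq 10$, so Theorem~\ref{T:automorphism} supplies a finite subgroup $H$ of the geometric automorphism group $A := \Aut(\bar C)$ with $rs \mid |H|$ and $|H| \neq rs$; moreover $2rs \mid |H|$ when $rs \equiv 2 \pmod 4$. In characteristic $p \notin \{2, 3, 5\}$, $|A|$ divides $48$, so the constraint $rs \mid |H| \mid 48$ with $r, s > 1$ coprime and $\{r, s\} \neq \{2, 3\}$ leaves the short list $\{3, 4\}$ and $\{3, 8\}$; each forces $|\Aut(\bar C)| \geq 24$, so $C$ must be one of the few exceptional genus-$2$ curves (notably the Bolza curve $y^2 = x^5 - x$, whose automorphism group is isomorphic to $\mathrm{GL}_2(\F_3)$), and a direct inspection of the subgroup structure together with the refined form of the automorphism statement mentioned in the excerpt (Theorem~\ref{T:automorphism-plus-galois}) rules out each pair. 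Parallel enumerations handle characteristics $2$ and $3$. This leaves $\characteristic K = 5$, where the only genus-$2$ curves (up to $\bar K$-isomorphism) with $5 \mid |A|$ are forms of $y^2 = x^5 - x$, whose $A$ has order $240$ and sits as an extension of $\PGL_2(\F_5)$ by the hyperelliptic involution; among coprime pairs compatible with the divisibility and parity conditions, only $\{2, 5\}$ survives.

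With characteristic $5$ and $\{r, s\} = \{2, 5\}$ fixed, I next identify the $K$-forms. The group $A$ contains a normal subgroup isomorphic to $(\F_5, +)$ acting by $x \mapsto x + c$, and by additive Hilbert~$90$ every $K$-twist of $y^2 = x^5 - x$ arising from a cocycle into this subgroup has the form $C_a : y^2 = x^5 - x + a$ for some $a \in K$, with $C_a \cong_K C_0$ iff $\Tr_{K/\F_5}(a) = 0$. The $K$-rational automorphism $(x, y) \mapsto (4 x, 2 y)$ of $y^2 = x^5 - x$ induces $C_a \cong_K C_{4a}$, while $(x, y) \mapsto (2 x, \sqrt 2\, y)$, defined only over $K(\sqrt 2)$, induces $C_a \cong_{K(\sqrt 2)} C_{2a}$. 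Thus the $K(\sqrt 2)$-orbit $\{C_a, C_{2a}, C_{3a}, C_{4a}\}$ of $C_a$ splits into the two $K$-classes $\{C_a, C_{4a}\}$ and $\{C_{2a}, C_{3a}\}$, which are distinct provided $a \neq 0$. For $K(\sqrt 2)$ to be a proper quadratic extension of $K$ we need $[K : \F_5]$ odd, and for $C_a \not\cong_K C_0$ we need $\Tr_{K/\F_5}(a) \neq 0$.

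For the converse, I would exhibit the two isomorphisms directly. Over the Artin--Schreier extension $K(c)$ with $c^5 - c = -a$, of degree $5$ over $K$ since $\Tr_{K/\F_5}(a) \neq 0$, the translation $x \mapsto x + c$ gives $C_a \cong C_0$; since $2c$ satisfies $(2c)^5 - 2c = -2a$, the same field $K(c)$ realizes $C_{2a} \cong C_0$, yielding $C_a \cong_{K(c)} C_{2a}$. Over $K(\sqrt 2)$ the map $(x, y) \mapsto (2 x, \sqrt 2\, y)$ directly realizes $C_a \cong C_{2a}$. Each of the two extensions has prime degree over $K$, so minimality reduces to the assertion $C_a \not\cong_K C_{2a}$, which is exactly the orbit decomposition above. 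The main obstacle is the characteristic case analysis: in characteristics $2$ and $3$ the classification of genus-$2$ automorphism groups admits exotic large groups, and the Galois-refined Theorem~\ref{T:automorphism-plus-galois}---not a mere order count---must be invoked to exclude each coprime pair $\{r, s\}$ that survives the crude divisibility bound.
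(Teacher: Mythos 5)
Your overall strategy---invoke Theorem~\ref{T:automorphism} (and its refinement Theorem~\ref{T:automorphism-plus-galois}), run through Igusa's classification of reduced automorphism groups of genus-$2$ curves by characteristic, and then settle the surviving cases by Galois cohomology of the automorphism group $G\rtimes\langle\text{Frobenius}\rangle$---is the same as the paper's. The paper's case elimination is a little crisper: for $S_4$ (so $\{r,s\}\in\{\{3,4\},\{3,8\}\}$) it observes that base change to $\F_{q^2}$ or $\F_{q^4}$ would produce a $\{2,3\}$ example with reduced group $S_4$ over a square-order field, which Theorem~\ref{T:precise2}(a) forbids because $-2$ is then a square; you appeal instead to a direct subgroup inspection, which is fine but should be spelled out. (A small slip: in characteristic $>5$ you claim $\#\Aut C_{\Kbar}$ divides $48$, but the $C_5$ reduced group gives order $10$; the pair $\{2,5\}$ is still excluded there, just because $\#G=rs$ is not allowed.) Your explicit isomorphisms $(x,y)\mapsto(2x,\sqrt2\,y)$ over $K(\sqrt2)$ and $x\mapsto x+c$ over $K(c)$ are a nice concrete complement to the paper's cohomological description of the twists.

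There is, however, a genuine circularity in the final step. You reduce minimality to the assertion $C_a\not\cong_K C_{2a}$, and you justify that assertion by ``the orbit decomposition above''---but the orbit decomposition is precisely the claim that $\{C_a,C_{4a}\}$ and $\{C_{2a},C_{3a}\}$ are distinct $K$-classes, i.e.\ the thing you are trying to prove. Additive Hilbert~$90$ only tells you when a cocycle into the subgroup $(\F_5,+)\subset G$ is trivial \emph{in $H^1$ of that subgroup}; since the map $H^1(\Gal\Kb/K,\F_5)\to H^1(\Gal\Kb/K,G)$ need not be injective, one has to check separately that the images of the cocycles sending Frobenius to $(x,y)\mapsto(x+1,y)$ and to $(x,y)\mapsto(x+2,y)$ remain distinct in $H^1(\Gal\Kb/K,G)$. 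The paper handles the analogous point in Theorem~\ref{T:precise2} by a direct computation---e.g.\ observing that $y^2=x^5-x+1$ and $y^2=x^5-x+2$ have different numbers of points over $\F_{125}$---and that (or a brute-force conjugacy check in the order-$480$ group $G\rtimes\langle\phi\rangle$) is exactly what your argument still needs.
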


Our results lead naturally to a number of related questions.

\begin{question}
\label{Q:manyCs}
Given coprime integers $r>1$ and $s>1$, is there an upper bound on
the size of a set of curves $\{C_i\}$ over a field $K$
such that each pair of curves $(C_i,C_j)$ with $i\neq j$ has minimal
isomorphism extensions of degrees $r$ and $s$\/\textup{?}
\end{question}

\begin{question}
\label{Q:manyrs}
Given a finite set $\{r_i\}$ of integers greater than $1$, no one of
which divides any of the others, do there exist curves $C$ and $D$
over a field $K$ that have minimal isomorphism extensions of degree $r_i$
for each~$i$\/\textup{?}
\end{question}

Our methods can be used to show that the answer to
Question~\ref{Q:manyCs} is no and the answer to Question~\ref{Q:manyrs}
is yes; furthermore, the answers remain the same even if the field $K$ 
is required to be a finite field of a given positive characteristic.
As we will see, questions of this sort are related to the following
natural question:

\begin{question}
\label{Q:automorphismgroup}
Given a finite field $K$, a finite group $G$,
and an automorphism $\varphi$ of $G$,
does there exists a curve $C$ over $K$ whose
geometric automorphism group is isomorphic to $G$, 
with the isomorphism taking the action of Frobenius on 
the automorphism group to $\varphi$\/\textup{?}
\end{question}

This question is related to a result of Madden and 
Valentini~\cite{madden-valentini}, who show that for every finite
group $G$ and every field $K$, there is a curve over 
the algebraic closure of $K$ whose automorphism group is 
isomorphic to~$G$.

For finite fields $K$, specifying a finite extension $L$ of $K$ (up to 
isomorphism) is equivalent to specifying the degree of $L$ over $K$.
For arbitrary fields this is of course no longer the case.
This leads us to our final open question:

\begin{question}
\label{Q:arbitraryLM}
Given two linearly disjoint finite extension fields $L$ and $M$ 
of a field $K$, do there exist curves $C$ and $D$ over $K$ having
$L$ and $M$ as minimal isomorphism extensions\/\textup{?}
\end{question}

In Section~\ref{S:twists} we give some background information on
nonabelian Galois cohomology and twists of curves.  In
Section~\ref{S:proofs} we show how Theorem~\ref{T:main} can be proven
for finite fields if we can provide examples of curves with certain 
automorphism groups; we then complete the proof of Theorem~\ref{T:main}
for finite fields by constructing --- in two different ways --- curves 
with the right kind of automorphism group. In Section~\ref{S:proof3}
we provide several explicit constructions of curves satisfying the
conclusion of Theorem~\ref{T:main} for many different values of $r$ and $s$;
in particular, these examples give a complete proof of the theorem
in the case $K_0=\Q$.
In Section~\ref{S:genus0} we give a cohomology-free proof of
Theorem~\ref{T:genus0}.  In Section~\ref{S:grouptheory} we prove some
results from group theory that we require in the sections that follow.
We prove Theorem~\ref{T:genus1} in Section~\ref{S:genus1}
and Theorem~\ref{T:automorphism} in Section~\ref{S:automorphism}.
In Section~\ref{S:genus2} we prove Theorems~\ref{T:genus2} 
and~\ref{T:genus2bigger}.  In fact, we prove a stronger version of
Theorem~\ref{T:genus2} that gives more information about the 
automorphism groups of the genus-$2$ examples. 
In Section~\ref{S:related} we close the paper with a discussion
of questions related to our results.

\emph{Conventions.}
In this paper, by a \emph{curve over a field $K$} we always mean a 
connected complete geometrically nonsingular one-dimensional scheme over
$\Spec K$.  It follows by definition that all morphisms of curves over
$K$ are themselves defined over~$K$.  
When we present a curve via explicit equations, we mean the normalization
of the projective closure of the possibly-singular variety defined by those
equations.
If $C$ is a curve over $K$ and if
$L$ is an extension field of $K$, then the fiber product 
$C\times_{\Spec K} \Spec L$ is a curve over $L$ that we denote~$C_L$.
We can restate our definition of a twist of a curve using this notation:
If $C$ is a curve over a field $K$ and if $L$ is an extension field of
$K$, then an \emph{$L$-twist} of $C$ is a curve $D$ over $K$ such that
$D_L \cong C_L$.  We denote the algebraic closure of a field $K$ by $\Kb$.
The \emph{geometric automorphism group} of a curve $C$
over a field $K$ is the group $\Aut C_\Kbar$.

A \emph{proper divisor} of a positive integer $n$ is a positive
divisor of $n$ that is strictly less than $n$.  A \emph{proper subextension}
of a field extension $L/K$ is a subfield of $L$ that contains $K$, but that
is not equal to $L$ itself.

\emph{Acknowledgments.}
The authors are grateful to Brad Brock, George Glauberman, Murray Schacher, and 
Joseph Wetherell for helpful discussions.

\section{Twists and cohomology}
\label{S:twists}
In this section we review the relationship between twists of curves and
Galois cohomology.  To simplify the exposition, we limit our discussion
to the case of curves over finite fields.  Our source for the material in
this section is Serre's book~\cite{serre:GC}.  Nonabelian cohomology is
discussed in ~\cite{serre:GC}*{{\S}I.5}, and the cohomological
interpretation of twists is given 
in~\cite{serre:GC}*{{\S}III.1}.

Let $K$ be a finite field, let $X$ be a curve over $K$, and let $G = \Aut X_\Kbar$.
We begin by defining the pointed cohomology set  $H^1(\Gal \Kb/K,G)$.

We view $G$ as a topological group by giving it the discrete topology,
and we give the Galois group $\Gal \Kb/K$ the profinite topology.  A
\emph{cocycle} is a continuous map $\sigma \mapsto a_\sigma$ from
$\Gal \Kb/K$ to $G$ that satisfies the \emph{cocycle condition}
$a_{\sigma\tau} = a_\sigma a_\tau^\sigma$ for all $\sigma, \tau$ in
$\Gal \Kb/K$.  Since the absolute Galois group of a finite field is
freely generated as a profinite group by the Frobenius
automorphism~$\varphi$, a cocycle is completely determined by where it
sends $\varphi$.  Furthermore, for every $g\in G$ there
is a cocycle that sends $\varphi$ to~$g$.

Two cocycles $\sigma\mapsto a_\sigma$ and $\sigma\mapsto b_\sigma$ are
\emph{cohomologous} if there is an element $c$ of $G$ such that
$b_\sigma = c^{-1} a_\sigma c^\sigma$ for all $\sigma\in \Gal \Kb/K$.
This defines an equivalence relation on the cocycles, and the set of 
equivalence classes is denoted $H^1(\Gal \Kb/K,G)$.  We give 
$H^1(\Gal \Kb/K,G)$ the structure of a pointed set by distinguishing the 
class of the cocycle that sends all of $\Gal \Kb/K$ to the identity
of~$G$.

Let $\Tw(X)$ be the set of $K$-isomorphism classes of $\Kb$-twists
of~$X$.  We give $\Tw(X)$ the structure of a pointed set by 
distinguishing the isomorphism class of $X$ itself. As is shown 
in~\cite{serre:GC}*{{\S}III.1}, there is a bijection $\beta$
between the pointed sets $\Tw(X)$ and $H^1(\Gal \Kb/K,G)$, defined as 
follows:

Suppose $C$ is an $\Kb$-twist of $X$, and let $f$ be an isomorphism from
$X_\Kbar$ to~$C_\Kbar$.  Let $f^\varphi$ be the isomorphism $X_\Kbar\to C_\Kbar$ 
obtained from $f$ by replacing every coefficient in the equations
defining $f$ with its image under the Frobenius automorphism of~$\Kb/K$; 
this gives us an isomorphism from $X_\Kbar$ to $C_\Kbar$ because $X$ and $C$ are
defined over $K$.   We define $\beta(C)$ to be the cohomology class of
the cocycle that sends the Frobenius $\varphi\in \Gal \Kb/K$ to the
automorphism $f^{-1} f^\varphi$ of $X$.

Suppose $L$ is a finite extension of $K$.  Then there is a natural map 
from $\Tw(X)$ to $\Tw(X_L)$, defined by sending the class of an 
$\Kb$-twist $C$ of $X$ to the class of the $\Kb$-twist $C_L$ of~$X_L$.
This map is also easy to describe in terms of cohomology sets: A cocycle
representing a class of $H^1(\Gal \Kb/K,G)$ is a map $c:\Gal \Kb/K \to G$ 
that satisfies the cocycle condition, and we can define a map 
$H^1(\Gal \Kb/K,G) \to H^1(\Gal \Kb/L,G)$ by sending the class of a
cocycle $c$ to the class of $c|_{\Gal \Kbar/L}$.  We can also easily
describe this map in terms of Frobenius elements.  If $\varphi_K$ and
$\varphi_L$ are the Frobenius automorphisms of $\Kb/K$ and $\Kb/L$, 
respectively, then $\varphi_L = \varphi_K^n$, where
$n = \indexin{L}{K}$.  If a cocycle of $H^1(\Gal \Kb/K,G)$ sends 
$\varphi_K$ to $g$, then the corresponding cocycle of $H^1(\Gal \Kb/L,G)$
sends $\varphi_L$ to $g g^{\varphi_K} \cdots g^{\varphi_K^{n-1}}$.

A special case of this fact is important enough for our argument that we
state it as a lemma.

\begin{lemma}
\label{L:cohomology}
Let notation be as in the discussion above.  Suppose\/ $\Gal \Kb/K$ acts
trivially on $G = \Aut X_\Kbar$.  Then $H^1(\Gal \Kb/K,G)$ is isomorphic
\textup{(}as a pointed set\,\textup{)} to the set of conjugacy classes of~$G$.
If $C$ is a $\Kb$-twist of $X$ corresponding to the conjugacy class of
$g\in G$, then $C_L$ is the $\Kb$-twist of $X_L$ corresponding to the
conjugacy class of $g^n \in G$.
\end{lemma}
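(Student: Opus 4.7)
The plan is to unwind the cocycle conditions in the special case when the Galois action on $G$ is trivial. Under this hypothesis the cocycle condition $a_{\sigma\tau}=a_\sigma a_\tau^\sigma$ simplifies to $a_{\sigma\tau}=a_\sigma a_\tau$, so a cocycle is exactly a continuous homomorphism from $\Gal \Kb/K$ to the discrete group $G$. Since $\Gal \Kb/K$ is topologically generated by the Frobenius $\varphi$, continuity forces such a homomorphism to factor through a finite quotient, and the homomorphism is determined by the image $g = a_\varphi \in G$. Conversely, every $g \in G$ gives rise to a cocycle (as noted in the text just before the lemma); this is where finite‑fieldness matters, since we use the freeness of $\Gal \Kb/K$ on $\varphi$.

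Next I would check what the cohomology relation becomes. Two cocycles with Frobenius values $g$ and $g'$ are cohomologous iff there exists $c \in G$ with $g' = c^{-1} g c^\varphi$; with trivial action this reduces to $g' = c^{-1} g c$. Hence two cocycles are cohomologous precisely when their Frobenius values are $G$-conjugate, giving the bijection between $H^1(\Gal \Kb/K, G)$ and the set of conjugacy classes of $G$. The distinguished point (the trivial cocycle) corresponds to the conjugacy class of the identity, so the bijection is a map of pointed sets.

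For the second assertion I would appeal directly to the restriction formula stated in the paragraph preceding the lemma: if a cocycle of $H^1(\Gal \Kb/K,G)$ sends $\varphi_K$ to $g$, then the restricted cocycle in $H^1(\Gal \Kb/L,G)$ sends $\varphi_L = \varphi_K^n$ to $g\, g^{\varphi_K}\cdots g^{\varphi_K^{n-1}}$. Since $\Gal \Kb/K$ acts trivially on $G$, each $g^{\varphi_K^i}$ equals $g$, so the product collapses to $g^n$. Combining this with the bijection from the first part identifies the class of $C_L$ with the conjugacy class of $g^n$.

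There is no real obstacle here; the lemma is essentially a dictionary translation, and the only point requiring any care is the use of continuity and the topological generation of $\Gal \Kb/K$ by $\varphi$ to reduce cocycles to their Frobenius values. Everything else is formal manipulation of the definitions recalled earlier in this section.
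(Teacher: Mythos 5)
Your proof is correct and follows the same route as the paper's: reduce to Frobenius values using the preceding discussion, observe that with trivial Galois action the cohomology relation becomes conjugacy, and invoke the restriction formula to get $g^n$. The paper's version is simply terser, taking the reduction to Frobenius values as already established rather than re-deriving it via the homomorphism observation.
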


\begin{proof}
If $\Gal \Kb/K$ acts trivially on $G$ (that is, if all of the geometric
automorphisms of $X$ are defined over $K$), then the equivalence
relation for two cocycles being cohomologous degenerates into the
equivalence relation of conjugacy in~$G$, so $H^1(\Gal \Kb/K,G)$ is the
set of conjugacy classes of~$G$.

Suppose $C$ is a twist of $X$ corresponding to the cocycle that sends 
$\varphi_K$ to~$g$.  We have already noted that the cocycle representing
the class of the twist $C_L$ of $X_L$ is the cocycle that sends
$\varphi_L$ to $g g^{\varphi_K} \cdots g^{\varphi_K^{n-1}}$.  Since
$\varphi_K$ acts trivially on~$G$, this element is simply~$g^n$.  
\end{proof}

We close by noting that the relationship between twists of curves
and cohomology groups gives us a nice result about the automorphism
groups of twists of a given curve.  We state the result both for
curves and for \emph{pointed curves}, that is, curves with a marked
point.  Note that an elliptic curve is a pointed curve if we take 
the origin to be the marked point.

\begin{lemma}
\label{L:weight}
Let $X$ be a curve \textup{(}or a pointed curve\textup{)} over a finite field $K$,
and suppose the geometric automorphism group of $X$ is finite.
Then
\[
\sum_{C \in \Tw(X)} \frac{1}{\#\Aut C} = 1.
\]
\end{lemma}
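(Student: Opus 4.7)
The plan is to parametrize $\Tw(X)$ by the elements of $G := \Aut X_\Kbar$ (respectively, by the stabilizer in $G$ of the marked point, in the pointed case) using the value at Frobenius of a cocycle, and then conclude by an orbit-stabilizer count on $G$.

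Since $\Gal\Kb/K$ is topologically generated by Frobenius $\varphi$ and $G$ is finite and discrete, every continuous cocycle is determined by its value $a_\varphi\in G$, and every element of $G$ arises in this way. Composing with the bijection $\beta$ from \S\ref{S:twists} gives a surjection $G\twoheadrightarrow\Tw(X)$, $g\mapsto C_g$. The cocycle-equivalence relation $b_\sigma = c^{-1}a_\sigma c^\sigma$, specialized at $\sigma=\varphi$, becomes $g' = c^{-1}g c^\varphi$, so the fibers of $g\mapsto C_g$ are exactly the orbits of the right action $g\cdot c := c^{-1}g c^\varphi$ of $G$ on itself (``twisted conjugation'').

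The heart of the argument is to identify the stabilizer $\Stab_G(g)$ under this action with $\Aut C_g$. Fix an isomorphism $f\col X_\Kbar\to (C_g)_\Kbar$ with $f^{-1}f^\varphi = g$. Conjugation by $f$ identifies $G$ with $\Aut (C_g)_\Kbar$, and an element $\alpha\in G$ corresponds to a $K$-rational automorphism of $C_g$ if and only if $f\alpha f^{-1} = (f\alpha f^{-1})^\varphi = f^\varphi \alpha^\varphi (f^\varphi)^{-1}$; rearranging via $g=f^{-1}f^\varphi$ converts this into $\alpha^\varphi = g^{-1}\alpha g$, equivalently $\alpha^{-1}g\alpha^\varphi = g$, which is precisely the condition that $\alpha$ stabilize $g$. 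Hence $\#\Stab_G(g) = \#\Aut C_g$.

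Once this is established, orbit-stabilizer says the orbit of $g$ has size $|G|/\#\Aut C_g$, so summing orbit sizes yields $|G| = \sum_{C\in\Tw(X)} |G|/\#\Aut C$, and dividing by $|G|$ gives the lemma; the pointed-curve case is identical with $G$ replaced by the stabilizer of the marked point. The only delicate step is the middle paragraph, where one must carefully keep separate the Galois action on the coefficients of the scheme-theoretic isomorphism $f$ and the Galois action on the abstract group element $\alpha\in G$; conflating them is the easy way to get the identification of $\Stab_G(g)$ with $\Aut C_g$ wrong.
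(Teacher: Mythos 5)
Your proposal is correct and follows essentially the same route as the paper's own sketch: parametrize $\Tw(X)$ by orbits of the twisted-conjugation action $g\cdot c = c^{-1}gc^\varphi$ of $G$ on itself, identify the stabilizer of $g$ with $\Aut C_g$, and finish with orbit-stabilizer. The paper merely asserts the stabilizer identification, whereas you verify it by the computation $f^\varphi = fg \Rightarrow (f\alpha f^{-1})^\varphi = f\alpha f^{-1} \Leftrightarrow \alpha^{-1}g\alpha^\varphi = g$; that step is done correctly and is the only place where carelessness about the two Galois actions could go wrong.
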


\begin{proof}
For curves, this result appears as~\cite{vdgeer-vdvlugt}*{Prop.~5.1} 
and~\cite{katz-sarnak}*{Lem.~10.7.5}; for elliptic curves,
it is~\cite{howe:compositio1993}*{Prop.~2.1}.  We merely
sketch the proof here.
Let $G$ be the geometric automorphism group of $X$ and let 
$\varphi$ be the Frobenius automorphism of $\Kb$ over $K$.
We can define a (right) action of $G$ on itself by letting
an automorphism $\alpha$ act on $G$ by $a\mapsto\alpha^{-1}a\alpha^\varphi$.
The orbits of this action correspond to the elements of 
$H^1(\Gal \Kb/K,G)$, and hence to the twists of $X$.
Furthermore, if a twist $C$ corresponds to the cocycle that sends $\varphi$
to $a\in G$, then the automorphism group of $C$ is isomorphic to the stabilizer
of $a$ under the action of $G$ on itself that we just defined.
For each orbit $O$ of this action, let us choose a representative
element $a_O\in O$.  Clearly for each $O$ we have $(\#O)( \#\Stab a_O) = \#G$,
and we also clearly have
\[
\sum_{\text{orbits $O$}} \#O = \#G.
\]
Dividing this last equality by $\#G$ gives
\[
\sum_{C \in \Tw(X)} \frac{1}{\#\Aut C} = 
\sum_{\text{orbits $O$}} \frac{1}{\#\Stab a_O}
= 1.
\]
\end{proof}

\begin{remark}
The absolute Galois group of a finite field is isomorphic to~$\hat{\Z}$,
the profinite completion of the integers; this is the critical fact that
makes the discussion of twists of curves over finite fields simpler than
for curves over arbitrary fields.  We cannot resist observing that the
field $K=\BC((T))$  of Laurent series over the complex numbers 
also has $\hat{\Z}$ for its Galois group --- this follows from
Puiseux's theorem, which identifies $\bar{K}$ with 
$\cup_{n\ge1}\BC((T^{1/n}))$.  Thus, many of the arguments and examples
that we present below could easily be modified to work over $\BC((T))$ 
as well.
\end{remark}

\section{Two proofs of Theorem~\ref{T:main} for finite fields}
\label{S:proofs}
In this section we provide two proofs of Theorem~\ref{T:main} in the
case where $K_0$ is a finite prime field~$\F_p$.
Both proofs are based on the same basic strategy:
Given a prime $p$ and two coprime integers $r>1$ and $s>1$, 
we will find a power $q$ of $p$ and a curve $X$ over $K = \F_q$ such that
\begin{itemize}
\item the absolute Galois group $\Gal \Kb/K$ of $K$ acts trivially
      on the geometric automorphism group $G$ of $X$, and
\item the group $G$ contains two elements $x$ and $y$ such that
      $x^r$ and $y^r$ are conjugate to one another, and $x^s$ and $y^s$
      are conjugate to one another, but if $t$ is a proper divisor of
      $r$ or of $s$, then $x^t$ is not conjugate to $y^t$.
\end{itemize}
(Theorem~\ref{T:2rs}, below, shows that the order of such a group $G$
must be divisible by $rs$ and greater than~$rs$, and if $rs\equiv 2\bmod 4$
then the order of $G$ must be at least~$4rs$.)
Then we take $C$ and $D$ to be the $\Kb$-twists of $X$ corresponding
to $x$ and $y$, respectively.  Lemma~\ref{L:cohomology} 
shows that the degree-$r$ extension of $K$ and the degree-$s$ extension
of $K$ are both minimal isomorphism extensions for $C$ and~$D$.
The two proofs differ from one another in the choice of the
curve~$X$.

We choose to look for curves $X$ where $\Gal \Kb/K$ acts trivially
on $G$ purely for convenience; in this case the condition that
two cocycles give rise to twists of $X$ that have minimal isomorphism
extensions of degrees $r$ and $s$ turns into the easily-stated
conjugacy condition given above.  In Section~\ref{S:genus2} we will
analyze a number of examples in which the Galois group does
not act trivially.

\begin{proof}[First proof of Theorem~\textup{\ref{T:main}} for finite fields]
Let $K_0=\F_p$, $r$, and $s$ be given, and interchange $r$ and $s$, if
necessary, so that $r$ is odd. Let $D_{4rs}$ denote the dihedral 
group of order $4rs$, and let $u$ and $v$ be elements of $D_{4rs}$
satisfying $u^{2rs} = v^2 = 1$ and $vuv = u^{-1}$.  Note that $u^i$ is
conjugate to $u^j$ in $D_{4rs}$ if and only if 
$j \equiv \pm i \bmod 2rs$.

Let $m$ be an integer that is congruent to $1$ modulo $r$ and congruent 
to $-1$ modulo $2s$,  let $y = u^m$, and let $x=u$.  Suppose $i$ is an integer such
that $x^i$ is conjugate to $y^i$.  Then either
$i \equiv im \bmod 2rs$ or $i\equiv -im \bmod 2rs$.
The first possibility occurs precisely when $s$ divides $i$, and the second
when $r$ divides $i$.

A result of Madden and Valentini~\cite{madden-valentini} says that every 
finite group occurs as the automorphism group of a curve over~$\Fbar_p$.
Thus, there is a curve $X$ over $\Fbar_p$ whose automorphism group is 
isomorphic to $D_{4rs}$.  The curve $X$ may be defined over some finite field
$K\subset \Fbar_p$, and by replacing $K$ by a finite extension, we may assume that 
all of the geometric automorphisms of $X$ are defined over~$K$.

Proceeding as in the outline presented earlier, we find that there are
twists $C$ and $D$ of $X$ that have minimal isomorphism extensions
of degrees $r$ and $s$ over~$K$.
\end{proof}

This first proof is straightforward, but says nothing about the genus or
field of definition of the examples.  The next proof shows that we can 
find examples over~$\F_{p^2}$.

\begin{proof}[Second proof of Theorem~\textup{\ref{T:main}} for finite fields]
Let $K_0=\F_p$, $r$, and $s$ be given, and interchange $r$ and $s$, if necessary,
so that $r$ is odd.  Let $n$ be a positive integer that is coprime to $p$,
that is divisible by at least two odd primes, and that has at least one
prime divisor that is congruent to $1$ modulo~$2rs$.
Let $q$ be an even power of $p$, let
$t$ be the positive square root of~$q$, and fix a primitive $n$-th root
of unity $\zeta\in\Fbar_q$.  Let $X$ be the modular curve over 
$K = \F_q$ with the following property: for every finite extension 
$M = \F_{q^e}$ of~$K$, the noncuspidal $M$-rational points of $X$ 
parametrize triples $(E,P,Q)$, where $E$ is an elliptic curve over $M$
such that the $\F_{q^e}$-Frobenius acts on $E[n]$ as multiplication 
by~$t^e$, and where $P$ and $Q$ are $\Fbar_q$-rational points of order 
$n$ on $E$ such that the Weil pairing of $P$ and $Q$ is~$\zeta$.  (Thus
$X$ is an $\F_q$-rational version of the usual modular curve~$X(n)$,
constructed in much the same way as the Igusa-Ihara modular curve is
constructed --- see~\cite{ihara}.)  Let 
$G = \Aut X_{\Fbar_q}$ and let $G_0 = \Aut X$.  From the modular interpretation
of $X$ it is easy to see that $G_0$ contains a group isomorphic to 
$\SL_2(\Z/n\Z) / \langle\pm1\rangle$.  But the main result 
of~\cite{mod2} implies that $G\cong \SL_2(\Z/n\Z) / \langle\pm1\rangle$,
so $G$ is equal to~$G_0$.  Therefore the Galois group of $\Kb/K$ acts
trivially on $G$.

Let $\mu$ be an element of $(\Z/n\Z)^*$ of order $2rs$ such that
$\mu^{rs} \neq -1.$  (The conditions on $n$ imply that such a $\mu$ exists.)
Let $u$ be the matrix 
  $(\!\begin{smallmatrix}\mu &0 \\0 &1/\mu\end{smallmatrix}\!)$ 
in $\SL_2(\Z/n\Z)$, and let $v$ be the matrix
  $(\!\begin{smallmatrix}0 &1 \\-1 &0\end{smallmatrix}\!)$.
It is easy to see that the subgroup of 
  $G = \SL_2(\Z/n\Z) / \langle\pm1\rangle$
generated by the images of $u$ and $v$ is isomorphic to the dihedral
group $D_{4rs}$.

Let $m$ be an integer that is congruent to $1$ modulo $r$ and congruent 
to $-1$ modulo $2s$, and let $x$ and $y$ be the images
of $u$ and $u^m$ in $G$.  A straightforward computation shows 
that if $d$ is a proper divisor of $r$ or of $s$, 
then $u^d$ and $u^{md}$ have distinct sets of eigenvalues,
even up to sign, so $x^d$ and $y^d$ are not conjugate to one another in~$G$.
But $x^r$ and $y^r$ (and $x^s$ and $y^s$) are conjugate to one
another in $D_{4rs}$, whence also in~$G$.

As before, if we let $C$ and $D$ be the $L/K$-twists of $X$ given by the
elements $x$ and $y$ of $G$, then $C$ and $D$ satisfy the conclusion of 
Theorem~\ref{T:main}.
\end{proof}

The preceding proof gives examples over the quadratic extension of $K_0$, but with
genus greater than $(2rs)^3$.  In the following section, we will show
that when $p$ does not divide $rs$ we can construct explicit examples
(over possibly large extensions of $K_0$) of genus $rs-1$.

\section{Explicit examples, and the proof of Theorem~\ref{T:main} over $\Q$}
\label{S:proof3}

In this section we provide explicit examples of curves satisfying the
conclusion of Theorem~\ref{T:main} for certain values $r$ and $s$
and for certain prime fields $K_0$.
In particular, Constructions~\ref{Con:prsodd} and~\ref{Con:prseven}
give explicit curves for the case in which the characteristic $p$
of $K_0$ does not divide $rs$ (and therefore provide a proof of 
Theorem~\ref{T:main} for the case $K_0=\Q$), and Construction~\ref{Con:ps}
gives a construction for the case in which $r = p>0$ and $s$ is prime.

\begin{construction}
\label{Con:prsodd}
Let $K_0$ be a prime field whose characteristic $p$ is not~$2$.
Let $r>1$ and $s>1$ be integers that are coprime to one another, 
that are not divisible by $p$, and with $r$ odd.
Let $K$ be a finite extension of $K_0$ that contains the
$4rs$-th roots of unity and let $a\in K^*$
be an element whose image in $K^*/K^{*2rs}$ has order $2rs$.
Using the Chinese Remainder Theorem, choose integers $i$ and $j$ as
follows\/\textup{:}  If $s$ is odd, let $i$ and $j$ satisfy
\begin{align*}
i &\equiv \phantom{s+1}\llap{$1$}  \bmod r  &  j &\equiv \phantom{s+1}\llap{$-1$} \bmod r  \\
i &\equiv                   s+1    \bmod 2s &  j &\equiv                    s+1   \bmod 2s,
\intertext{while if $s$ is even, let $i$ and $j$ satisfy}
i &\equiv 1\bmod r    &  j &\equiv                   -1  \bmod r  \\
i &\equiv 1\bmod 2s   &  j &\equiv \phantom{-1}\llap{$1$}\bmod 2s.
\end{align*}
Let $C$ and $D$ be the curves over $K$ defined by
\begin{alignat*}{2}
C:&\quad  & z^2 & = w^{2rs} + a^i\\
D:&\quad  & v^2 & = u^{2rs} + a^j.
\end{alignat*}
Then $C$ and $D$ are twists of one another, and they have
minimal isomorphism extensions of degrees $r$ and $s$ over $K$.
\end{construction}

\begin{proof}
Note that whatever the parity of $s$, we always have
\[
(j-i,2rs) = 2s\text{\qquad and\qquad} (j+i,2rs) = 2r,
\]
and $sj$ is always even.  Also, $-1$ is a $2rs$-th power in $K^*$.
Let $E$ be the Kummer extension $K(a^{1/2rs})$ of $K$, let $e$ be
an element of $E$ with $e^{2rs} = a$, let $L = K(e^{2s})$, and let
$M = K(e^{2r})$, so that $L$ and $M$ are extensions of $K$ of
degrees $r$ and $s$, respectively.
We start by showing that $C$ and $D$ become
isomorphic to one another over $L$ and over~$M$.

Let $e_L = e^{2s}\in L$, so that $e_L^r = a$. 
Set $c = e_L^{(j-i)/(2s)}$ and $d = c^{rs}$. Then $u= cw$, $v = d z$ 
gives an isomorphism from $C_L$ to $D_L$.

Let $e_M = e^{2r}\in M$, so that $e_M^s = a$.  Set $c = e_M^{(j+i)/(2r)}$
and $d = e_M^{sj/2}$.  Then $u= c/w$, $v = d z / w^{rs}$ gives an isomorphism
from $C_M$ to $D_M$.

Now let $N$ be a finite extension of $K$ that is a proper subextension of
either $L$ or~$M$;  we must show that $C_N$ and $D_N$ are not
isomorphic to one another. 
Suppose, to obtain a contradiction, that
there is an isomorphism $\varphi$ from $C_N$ to $D_N$.  Then $\varphi$ induces an 
isomorphism $\bar{\varphi}$ from $\PP^1_N$ to $\PP^1_N$ that takes the roots
of the polynomial  $f = x^{2rs} + a^i\in N[x]$ to the roots of
$g = x^{2rs} + a^j$.  Kummer theory shows that if $d$ is the degree 
of $N$ over $K$, then the polynomial $f$
splits into irreducible factors of degree $rs/d$ (if $i$ is even)
or $2rs/d$ (if $i$ is odd).  In either case, these irreducible factors
have degree at least $6$.
Lemma~\ref{L:kummer} below shows that $\bar{\varphi}$ must be of the
form $u = cw$ or $u = c/w$ for some $c\in N^*$.
Thus our hypothetical map $\varphi$ from $C$ to $D$ is either of the form
\[
u = cw \qquad v = dz \qquad\text{for some $c,d \in N$}
\]
or of the form
\[
u = c/w \qquad v = dz/w^{rs} \qquad\text{for some $c,d \in N$.}
\]

If $\varphi$ has the former shape, we find that we must have
both $d^2 = c^{2rs}$ and $d^2 a^i  = a^j$, so that 
$a^{j-i} = c^{2rs}$.   Now, $j-i = 2st$ for some $t$ coprime to $r$,
so we find that $a^t = \zeta c^r$ for some $\zeta\in K$ with $\zeta^{2s} = 1$.
Since $K$ contains the $2rs$-th roots of unity, there is a $\xi\in K$ with
$\zeta = \xi^r$.  Thus, the element $\xi c$ of $N$ satisfies
$(\xi c)^r = a^t$.  Now, since the image of $a$ in $K^*/K^{*2rs}$ has order $2rs$,
the image of $a$ in $K^*/K^{*r}$ has order $r$, as does the image of $a^t$.
By Kummer theory, the degree of $K(\xi c)$ over $K$ is $r$, so the degree of 
$N$ over $K$ is divisible by $r$, a contradiction.

If $\varphi$ is of the form $u=c/w$, $v=dz/w^{rs}$, then 
we have both $d^2 = a^j$ and $d^2 a^i = c^{2rs}$, so that
$a^{j+i} = c^{2rs}$.  Now, $i+j\equiv 0\bmod r$ and $i+j\equiv 2 \bmod 2s$,
so $i+j = 2st$ for some $t$ coprime to $s$.  Arguing as in the preceding paragraph, 
we find that the degree of $N$ over $K$ is divisible by $s$, a contradiction.
\end{proof}

\begin{lemma}
\label{L:kummer}
Let $m>2$ be an integer, let $K$ be a field whose characteristic does not
divide $m$, and suppose that $K$ contains the $m$-th roots of unity.
Suppose $a\in K^*$ is not an $m$-th power and let $b$ be a nonzero element of $K$.
Then any $K$-rational automorphism $\psi:\PP^1_K\to\PP^1_K$ that takes the set of
roots in $\Kb$ of the polynomial $x^m-a\in K[x]$ to the set of roots of $x^m - b$ is 
of the form $x\mapsto cx$ or $x\mapsto c/x$ for some nonzero $c\in K$.
\end{lemma}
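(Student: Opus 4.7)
The plan is to combine the $K$-rationality of $\psi$ with Kummer theory to force $\psi$ to permute the pair $\{0,\infty\}\subset\PP^1$, from which the stated form is immediate. Fix $\alpha_0\in\Kb$ with $\alpha_0^m=a$, and set $\beta_0=\psi(\alpha_0)$; since $\psi$ sends the roots of $x^m-a$ to those of $x^m-b$, one has $\beta_0^m=b$, so the full root sets are $S_a=\{\zeta^i\alpha_0\}_{i=0}^{m-1}$ and $S_b=\{\zeta^i\beta_0\}_{i=0}^{m-1}$, where $\zeta\in K$ is a primitive $m$-th root of unity. Because $G=\Gal(\Kb/K)$ fixes $\zeta$, there are Kummer characters $\chi_a,\chi_b:G\to\mu_m$ determined by $\sigma(\alpha_0)=\chi_a(\sigma)\alpha_0$ and $\sigma(\beta_0)=\chi_b(\sigma)\beta_0$, and the hypothesis that $a$ is not an $m$-th power in $K$ forces $\chi_a$ to be nontrivial. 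The Galois-equivariance relation $\sigma\circ\psi=\psi\circ\sigma$, applied to any $\xi=\zeta^i\alpha_0\in S_a$ and unwound via the Kummer descriptions of the $G$-actions on $S_a$ and $S_b$, yields
\[
\psi(\chi_a(\sigma)\xi)=\chi_b(\sigma)\psi(\xi)\qquad\text{for all }\xi\in S_a\text{ and all }\sigma\in G.
\]

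The crucial step is to promote this identity from $S_a$ to all of $\PP^1$. The M\"obius transformation $x\mapsto\psi^{-1}(\chi_b(\sigma)\psi(x))$ agrees on $S_a$ with the M\"obius transformation $x\mapsto\chi_a(\sigma)x$, and since $|S_a|=m>2$, two M\"obius transformations that agree at three or more points must coincide; hence the two transformations are equal on all of $\PP^1$. Equivalently, $\psi$ conjugates multiplication by $\chi_a(\sigma)$ to multiplication by $\chi_b(\sigma)$ globally on $\PP^1$. This rigidity step is the main structural point of the argument, and it is precisely where the hypothesis $m>2$ enters.

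Now choose $\sigma$ with $\chi_a(\sigma)\ne1$, which exists because $\chi_a$ is nontrivial. Then $\chi_b(\sigma)\ne1$ as well, for otherwise $\psi$ would conjugate a nontrivial element of $\PGL_2$ to the identity. Both $x\mapsto\chi_a(\sigma)x$ and $y\mapsto\chi_b(\sigma)y$ are nontrivial automorphisms of $\PP^1$ with fixed-point set exactly $\{0,\infty\}$, and conjugation preserves fixed-point sets, so $\psi(\{0,\infty\})=\{0,\infty\}$. Hence $\psi$ has the form $x\mapsto cx$ or $x\mapsto c/x$ over $\Kb$, and $c\in K^*$ by the $K$-rationality of $\psi$.
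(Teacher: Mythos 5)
Your proof is correct, and it takes a genuinely different route from the paper's.

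The paper normalizes $\psi$ to an automorphism $\chi$ of $\PP^1_{\Kb}$ fixing $1$ and satisfying $\chi(\zeta^i)=\xi^i$, where $\zeta$ and $\xi$ are the roots of unity by which a fixed generator of $\Gal(L/K)$ acts on the roots of $x^m-a$ and of $x^m-b$; it then writes $\chi=(rx+s)/(tx+u)$, observes that the interpolation conditions at $i=0,1,2,3$ make a $4\times4$ Vandermonde matrix singular, concludes $\xi=\zeta^{\pm1}$, and finishes by three-point rigidity when $d>2$ together with a separate ad hoc argument when $d=2$ (this is where the paper invokes $m>2$). Your argument sidesteps the Vandermonde computation and the case split entirely: you derive the identity $\psi^{-1}\circ(\text{mult.\ by }\chi_b(\sigma))\circ\psi=(\text{mult.\ by }\chi_a(\sigma))$ on the $m$-point set $S_a$ directly from Galois-equivariance, promote it to an identity of M\"obius transformations on all of $\PP^1$ using $|S_a|=m>2$ (a single, cleanly isolated use of the hypothesis), and then finish with the observation that conjugation carries fixed-point sets to fixed-point sets, so $\psi$ permutes $\{0,\infty\}$. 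The result is a shorter, more conceptual proof with no coordinates and no case analysis on the order $d$ of the Kummer character; the paper's version is more elementary in the sense of relying only on an explicit determinant computation. Both are fine, but yours is arguably the cleaner of the two.
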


\begin{proof}
Let $L$ be the splitting field of $x^m - a$ over $K$, let $\sigma$ be a 
generator for $\Gal L/K$, and let $\zeta$ be the $m$-th root of unity
such that $\alpha^\sigma = \zeta \alpha$ for all roots $\alpha$ of $x^m-a$.
Let $d$ be the multiplicative order of $\zeta$, so that $d$ is the degree
of the irreducible factors of $x^m-a$ in $K[x]$.
Note that $d>1$ because $a$ is not an $m$-th power.
Since $\psi$ is $K$-rational, there is a primitive $d$-th root of unity $\xi$
in $K$ such that $\beta^\sigma = \xi \beta$ for all roots $\beta$ of $x^m-b$.

Let $\alpha$ and $\beta$ be roots of $x^m-a$ and $x^m-b$, respectively,
such that $\psi(\alpha) = \beta$.
The fact that $\psi$ is $K$-rational implies that 
$\psi(\alpha^\tau) = \beta^\tau$ for all $\tau\in\Gal L/K$,
so we have $\psi(\zeta^i\alpha) = \xi^i \beta$ for all integers~$i$.
Let $\chi$ be the automorphism of $\PP^1_\Kbar$ 
such that $\chi(x) = \psi(\alpha x) / \beta$.  Then $\chi(\zeta^i) = \xi^i$ for
all $i$.
Let $r,s,t,u$ be elements of $\Kb$ such that
$\chi(x) = (rx+s)/(tx+u).$  
The conditions that  $\chi(\zeta^i) = \xi^i$ for $i\in\{0,1,2,3\}$
show that $[r,s,-t,-u]$ is an element of the null space of the Vandermonde matrix
\[\left[\begin{matrix}
1       & 1 & 1            & 1     \\
\zeta   & 1 & \zeta  \xi   & \xi   \\
\zeta^2 & 1 & \zeta^2\xi^2 & \xi^2 \\
\zeta^3 & 1 & \zeta^3\xi^3 & \xi^3 
\end{matrix}\right].\]
Therefore the determinant of this matrix is $0$, and it follows that
either $\zeta = \xi$ or $\zeta\xi = 1$.

Suppose that $d>2$.  If $\zeta=\xi$ then $\chi$ agrees
with the automorphism $x\mapsto x$ for three distinct values
of $x$ (namely $1$, $\zeta$, and $\zeta^2$), so 
$\chi(x) = x$ and $\psi(x) = c x$ with $c= \beta/\alpha$.
If $\zeta \xi = 1$ then $\chi$ agrees with the automorphism $x\mapsto 1/x$ 
for three distinct values of $x$, so $\chi(x) = 1/x$ and $\psi(x) = c/x$
with $c= \beta\alpha$.

Suppose that $d=2$.  Then $\psi(-\alpha) = -\psi(\alpha)$ for
every root $\alpha$ of $x^m-a$, and it follows that $\chi(-\eta) = -\chi(\eta)$
for every $m$-th root of unity $\eta$.   Since $\chi(x) = (rx+s)/(tx+u)$,
we find that $rt\eta^2 = su$ for every $m$-th root of unity $\eta$.  
Since $m>2$, we must have $rt=su=0$, and we see that either $\psi(x) = cx$ or
$\psi(x) = c/x$ for some constant $c$.
\end{proof}

\begin{remark}
The lemma would be false without the assumption that 
$a$ is not an $m$-th power.  For example, suppose $r$ and $s$
are nonzero elements of $\F_p$ with $r^2 \neq s^2$.  
Then the automorphism $x\mapsto (rx + s)/(sx + r)$
permutes the roots of $x^{p+1} - 1$.  However, the lemma would remain
true if we replaced the hypothesis about $a$ with the hypothesis that
$m\not\equiv 0,1 \bmod p$. 
\end{remark}

\begin{construction}
\label{Con:prseven}
Let $r>1$ and $s>1$ be odd integers that are coprime to one another.
Let $q$ be a power of $4$ that is congruent to $1$ modulo~$rs$ and
let $a$ be a generator of\/ $\F_q^*$.  Let $m$ be an integer that is
congruent to $-1$ modulo $r$ and to $1$ modulo $s$.
Let $C$ and $D$ be the curves over $K=\F_q$ defined by
\begin{alignat*}{2}
C:&\quad  & z^2 + z & = \frac{a}{w^{rs} + a}\\
D:&\quad  & v^2 + v & = \frac{a^m}{u^{rs} + a^m}.
\end{alignat*}
Then $C$ and $D$ are twists of one another, and they have
minimal isomorphism extensions of degrees $r$ and $s$ over $K$.
\end{construction}

\begin{proof}
We start by showing that $C$ and $D$ become
isomorphic to one another over the extensions of $K$ of degrees $r$ and $s$.

Let $L$ be the degree-$r$ extension of $K$ and let $e$ be an element of $L$
with $e^r = a$.  Set $c = e^{(m - 1)/s}$. Then
$u= cw$, $v = z$  gives an isomorphism from $C_L$ to $D_L$.

Let $M$ be the degree-$s$ extension of $K$ and let $e$ be an element of $M$
with $e^s = a$.  Let $\omega$ be an element of $K$ such that $\omega^2 + \omega = 1$.
Set $c = e^{(m+1)/r}$.  
Then $u= c/w$, $v = z +\omega$ gives an isomorphism from $C_M$ to $D_M$.

Now let $N$ be a finite extension of $K$ of whose degree $d$ is a proper
divisor of $r$ or of~$s$; we must show that $C_N$ and $D_N$ are not
isomorphic to one another.  Suppose, to obtain a contradiction, that
there is an isomorphism $\varphi$ from $C_N$ to $D_N$.  Then $\varphi$ induces an 
isomorphism $\bar{\varphi}$ from $\PP^1_N$ to $\PP^1_N$ that takes the roots
of the polynomial  $f = x^{rs} + a\in N[x]$ to the roots of
$g = x^{rs} + a^m$, and since $f$ has no roots in $N$ we see from 
Lemma~\ref{L:kummer} that $\bar{\varphi}$ is either of the form $x\mapsto cx$ or
$x\mapsto c/x$ for some constant $c\in N$.

If $\bar{\varphi}(x) = cx$, then the roots of $(cx)^{rs} + a^m$ must be the
roots of $x^{rs} + a$, so $a^{m-1} = c^{rs}$.  As in the proof of 
Construction~\ref{Con:prsodd}, we find that $a$ must be an $r$-th power
in $N$, so $d$ is a multiple of $r$, a contradiction.
Similarly, if $\bar{\varphi}(x) = c/x$ then we find that $a^{m+1} = c^{rs}$,
so that $a$ is an $s$-th power in $N$, and $d$ is a multiple of $s$,
again a contradiction.
\end{proof}

\begin{remark}
The curves in Constructions~\ref{Con:prsodd} and~\ref{Con:prseven}
have genus $rs-1$.
\end{remark}

\begin{construction}
\label{Con:ps}
Let $p$ and $s$ be distinct prime numbers, let $q$ be a power of $p$
that is congruent to $1$ modulo~$s$, and let $a$ be a generator of\/ $\F_q^*$.
Let $C$ and $D$ be the curves over $K=\F_q$ defined by
\begin{alignat*}{2}
C:&\quad  & z^q - z & =   w^s - 1  \\
D:&\quad  & v^q - v & = a u^s - 1.
\end{alignat*}
Then $C$ and $D$ are twists of one another, and they have
minimal isomorphism extensions of degrees $p$ and $s$ over $K$.
\end{construction}

\begin{proof}
Let $L$ be the degree-$p$ extension of $K$ and let $e$ be an element of $L$
with $e^q - e = a - 1$.  Then
$u= w$, $v = az + e$  gives an isomorphism from $C_L$ to $D_L$.

Let $M$ be the degree-$s$ extension of $K$ and let $e$ be an element of $M$
with $e^s = a$.  Then $u= w/e$, $v = z$ gives an isomorphism from $C_M$ to $D_M$.

To complete the proof, we must show that $C$ and $D$ are not isomorphic to one
another over $K$.  To see this, we can simply note that $C$ and $D$ have different
numbers of $K$-rational points.  Both curves have a single rational point 
lying over the (singular) point at infinity in the models given above.
The curve $D$ has no further rational points, because if $u$ and $v$ are elements
of $K$ then $v^q - v = 0$ but $a u^s \neq 1$, because $a$ is not an $s$-th power.
On the other hand, $C$ has $sq$ further rational points: $z$ can be an arbitrary
element of $K$, and $w$ can be an arbitrary $s$-th root of unity.
\end{proof}

\begin{remark}
The curves in this construction have genus $(q-1)(s-1)/2$.
\end{remark}

One can relate these constructions to our discussion of automorphism groups.
For example, the curves $C$ and $D$ from Construction~\ref{Con:prsodd}
are both twists of the curve $X$ over $\F_q$ defined by 
$y^2 = x^{2rs} + 1$.  Let $a$ be the generator of $\F_q^*$ chosen
in Construction~\ref{Con:prsodd}, let $\alpha\in\Fbar_q$ satisfy
$\alpha^{2rs} = a$, and let $\zeta$ be the primitive $2rs$-th
root of unity $\alpha^{q-1}$.
The curve $X$ has some obvious automorphisms $\rho,\varphi,\eta$ defined by
\begin{align*}
\rho(x,y)    & = (\zeta x,y)     \\
\varphi(x,y) & = (1/x, y/x^{rs}) \\
\eta(x,y)    & = (x, -y).
\end{align*}
The subgroup $G$ of $\Aut X$ generated by these automorphisms has 
order~$8rs$, and one can show that when $2rs \not\equiv1\bmod p$
the curve $X$ has no geometric automorphisms other than these.  The 
curve $C$ is the twist of $X$ by $(\eta\rho)^i$ and the curve $D$ is the 
twist of $X$ by $(\eta\rho)^j$.  One can compute that the $r$-th powers
of $(\eta\rho)^i$ and $(\eta\rho)^j$ are conjugate to one another in~$G$,
as are their $s$-th powers, but their $d$-th powers are not conjugate to one 
another when $d$ is a proper divisor of $r$ or of~$s$.

Similar computations can be made for the curves that appear
in Constructions~\ref{Con:prseven} and~\ref{Con:ps}.

\begin{remark}
The constructions in this section
depend rather visibly on the existence of elements of
order $r$ and $s$ in the group $\Aut \PP^1_{\Kbar_0}$. 
When $K_0 = \F_p$ this group contains no elements of order $p^2$,
so any explicit construction that deals with general values of $r$ 
and $s$ will have to use ideas not present in this section.
\end{remark}

\begin{remark}
Suppose $r$ and $s$ are distinct prime numbers, and let $p$ be either 
$0$ or a prime.  Then we can apply one of the constructions given in 
this section to produce 
explicit equations for curves $C$ and $D$ over a field of characteristic~$p$
that satisfy the conclusion of Theorem~\ref{T:main}.  Thus, this section 
provides a proof of Theorem~\ref{T:main} in the special case where $r$ and
$s$ are prime.
\end{remark}

\section{The nonexistence of genus-$0$ examples}
\label{S:genus0}
In this section we give a proof of Theorem~\ref{T:genus0} that uses no Galois 
cohomology.  Before we start, we note that there is a short proof based on
quaternion algebras and the Brauer group.
First, one can relate twists of the projective line
to quaternion algebras (as in~\cite{serre:GC}*{{\S}III.1.4});
the proof then reduces to the problem of showing that two quaternion
algebras $H_1$ and $H_2$ over $K$ that become isomorphic to one another
over an odd-degree extension $L$ of $K$ are already isomorphic over $K$.  
The classes of $H_1$ and $H_2$ in the Brauer group $\Br(K)$ of $K$ have order~$2$,
so we would like to show that the $2$-torsion element $[H_1]-[H_2]$ of~$\Br(K)$,
which becomes trivial in $\Br(L)$, is trivial already in~$\Br(K)$. 
If $L$ is a separable extension of $K$ there is an easy argument 
that shows this; if $L$ is inseparable over $K$,
we can use~\cite{serre:LF}*{Exer.~X.4.2}.  For those
readers familiar with the concepts involved, this is a reasonably 
direct method of proof.
The proof we present here is slightly longer, but it uses much less
machinery.

We start with some basic facts about curves of genus~$0$.

\begin{lemma}
\label{L:conic}
Let $C$ be a genus-$0$ curve over a field $K$.  Then there is an
embedding of $C$ into\/ $\PP^2$ as a nonsingular conic.  Also, $C$ is 
isomorphic to\/ $\PP^1$ if and only if it has a rational point.
\end{lemma}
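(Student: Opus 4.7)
The plan is to deduce both assertions from Riemann--Roch applied to the (anti)canonical divisor. Let $K_C$ denote a canonical divisor on $C$; since $C$ has genus $0$, $\deg K_C = -2$ and hence $\deg(-K_C) = 2$. Riemann--Roch, together with the vanishing $h^0(2K_C) = 0$ forced by $\deg(2K_C) < 0$, yields $h^0(-K_C) = 3$. Since $\deg(-K_C) = 2 \ge 2g+1$, the complete linear system $|-K_C|$ is very ample, and the associated closed embedding $C \hookrightarrow \PP^2$ has image of degree $2$---a smooth plane conic.

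For the second assertion the `only if' direction is trivial. For the converse, given a $K$-rational point $P$ on $C$, I would apply Riemann--Roch to the divisor $P$: since $\deg(K_C - P) = -3 < 0$ one has $h^0(K_C - P) = 0$, so $h^0(P) = 2$. Because $\deg P = 1 \ge 2g$, the linear system $|P|$ is base-point-free, and the associated $K$-morphism $C \to \PP^1$ has degree $1$ and therefore must be an isomorphism. Alternatively, realizing $C$ as a smooth conic in $\PP^2$ via the first part and projecting from $P$ produces an isomorphism with $\PP^1$ directly; this geometric route may be worth mentioning since it is the form that will actually be useful in the sequel.

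The only conceivable wrinkle is that $K$ may be an arbitrary---possibly imperfect or non-algebraically-closed---field, so one must confirm that Riemann--Roch, the identification of $h^0$ with the $K$-dimension of section spaces, and the very-ampleness criterion for divisors of degree at least $2g+1$ all hold in this setting. These are standard facts for the curves fixed by the paper's conventions (connected, complete, geometrically nonsingular, one-dimensional over $\Spec K$), so I anticipate no genuine obstacle; the proof should occupy at most a few lines.
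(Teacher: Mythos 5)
Your proposal is correct and follows the same basic strategy as the paper: apply Riemann--Roch to the anticanonical divisor $-K_C$ to produce an embedding of $C$ as a plane conic, then handle the rational-point criterion separately. The differences are modest and mostly stylistic. Where you invoke the general ``degree $\ge 2g+1$ implies very ample'' criterion as a black box, the paper works more explicitly: it uses $\ell(2D)=5$ to exhibit a single quadratic relation among $x^2,xy,xz,y^2,yz,z^2$, checks that the resulting conic $C'$ cannot be singular (else its equation would factor, contradicting linear independence of $x,y,z$), and verifies that $x,y,z$ generate the function field so that $C\to C'$ is an isomorphism. Your route is shorter but relies on the very-ampleness criterion holding over a possibly non-perfect base field --- which, as you correctly note, is fine here (very ampleness can be checked after faithfully flat base change to $\bar K$, and $H^0$ commutes with such base change), but this is exactly the kind of step one wants made explicit. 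For the second assertion, the paper simply projects $C'$ away from the rational point; your Riemann--Roch argument via $|P|$ is an acceptable alternative, and you also note the projection route, which is the one the paper actually uses later. No gaps; the proof is sound.
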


\begin{proof}
The canonical divisor class of $C$ has degree $-2$.  Let $D$ be $-1$ 
times a canonical divisor.  Then the Riemann-Roch formula shows that for
every positive integer $n$, we have $\ell(n D) = 2n + 1$. In particular,
we find that there are three linearly independent functions $x$, $y$, 
$z$ in $L(D)$; furthermore, since $\ell(2D) = 5$, there must be a 
relation among the $6$ functions $x^2, xy, xz, y^2, yz, z^2$.  This 
relation defines a conic $C'$ in $\PP^2$ and a map $C\to C'$.  If $C'$ 
were singular, its defining equation would factor into the product of 
two linear terms, contradicting the fact that $x$, $y$, and $z$ are
linearly independent.  Using the fact that $\ell(n D) = 2n + 1$ it is 
not hard to show that the functions $x^i y^j z^k$ with $i + j + k = n$ 
span $L(nD)$, and it follows that the function field of $C$ is generated
by $x$, $y$, and $z$.  Therefore, the map $C \to C'$ is an isomorphism.

If $C$ is isomorphic to $\PP^1$ then it has a rational point. 
Conversely, if $C\cong C'$ has a rational point, then projecting $C'$
away from this rational point onto our favorite copy of $\PP^1$ in 
$\PP^2$ will give us an isomorphism from $C$ to $\PP^1$.
\end{proof}

Next, we require a result about quadratic forms.

\begin{lemma}[Springer]
\label{L:Springer}
Let $Q$ be a quadratic form over a field $K$.  If $Q$ represents $0$
over an odd-degree extension $L$ of $K$, then it represents $0$ 
over~$K$.
\end{lemma}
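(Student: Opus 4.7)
The plan is to proceed by induction on $n = \indexin{L}{K}$, with the base case $n=1$ being trivial. For the inductive step I would first reduce to the case where $L = K(\alpha)$ is a simple extension: if $L/K$ is not simple, pick any $\alpha \in L$ with $K(\alpha) \neq K$ of odd degree over $K$, and apply the inductive hypothesis separately to $K(\alpha)/K$ and to $L/K(\alpha)$, both of which have odd degree strictly less than $n$.

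Once $L = K(\alpha)$ with minimal polynomial $p(x) \in K[x]$ of odd degree $n$, the key idea is to write the assumed zero of $Q$ over $L$ as $(f_1(\alpha), \ldots, f_k(\alpha))$ where each $f_i \in K[x]$ has degree less than $n$, chosen so that the $f_i$ have no common polynomial factor and so that $m := \max_i \deg f_i$ is as small as possible over all such representations. Then $F(x) := Q(f_1(x), \ldots, f_k(x))$ is a polynomial in $K[x]$ of degree at most $2m$, vanishing at $\alpha$, hence divisible by $p(x)$; write $F = p \cdot h$ with $h \in K[x]$.

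The heart of the argument is the following dichotomy. If the vector of leading coefficients of the $f_i$ is itself a zero of $Q$, then $Q$ represents $0$ over $K$ directly. Otherwise $F$ has degree exactly $2m$, so $h$ has degree exactly $2m - n$; since $n$ is odd and $2m$ is even, this degree is odd and bounded above by $n - 2$. Hence $h$ has an irreducible factor $q(x) \in K[x]$ of odd degree $d < n$. Letting $\beta$ be a root of $q$ and using that $\gcd(f_1, \ldots, f_k) = 1$ forces at least one $f_i(\beta)$ to be nonzero, I obtain a nontrivial zero of $Q$ over the odd-degree extension $K(\beta)/K$, and the inductive hypothesis finishes the argument.

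The main obstacle is the intertwining of the two normalizations (minimality of $m$ and coprimality of the $f_i$) with the case analysis on whether the leading-coefficient vector solves $Q$. Once the coprimality is arranged, the specialization $\alpha \mapsto \beta$ is guaranteed to produce a nonzero vector, which supplies the nondegeneracy needed to feed the induction; without coprimality one could be left with the trivial solution over $K(\beta)$ and gain nothing.
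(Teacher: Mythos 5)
Your proof is correct and is essentially Springer's classical argument, the same one the paper gives: represent the $L$-valued zero by polynomials of degree less than $n$, divide out the gcd, note that $Q(f_1,\ldots,f_k)$ has even degree strictly less than $2n$ (or else the leading coefficients already give a $K$-zero), factor out the minimal polynomial, and pass to a root of an odd-degree irreducible factor of the cofactor. The only differences from the paper are organizational — you phrase it as induction where the paper uses a minimal counterexample (which also yields the primitive element for free via the absence of intermediate fields) — and your extra normalization minimizing $\max_i\deg f_i$ is harmless but unused.
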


\begin{proof}
For fields of characteristic not $2$, this is proven in~\cite{springer}
(and reproduced, for example, in~\cite{scharlau}*{Thm.~2.5.3}).  But the
proof is surprisingly simple, and works in characteristic $2$ as well, 
so we present the proof here for the reader's convenience.

We argue by contradiction.  Let $n$ be the smallest odd integer for
which there is a field $K$, a degree-$n$ extension $L$ of $K$, and a
quadratic form $Q$ over $K$ for which the lemma fails.  (Note that 
clearly $n>1$.)  The minimality of $n$ shows that there are no fields 
intermediate between $L$ and $K$, so there is a primitive element 
$\alpha$ for $L$ over $K$.  Let $p\in K[t]$ be the minimal polynomial 
for $\alpha$.

Let $m$ be the number of variables for the quadratic form~$Q$, and 
suppose $\beta_1,\ldots,\beta_m$ are elements of $L$, not all zero, such
that $Q(\beta_1,\ldots,\beta_m) = 0.$  Let $f_1,\ldots,f_m\in K[t]$ be 
polynomials of degree at most $n-1$ such that $\beta_i = f_i(\alpha)$ 
for all~$i$.  Then in the polynomial ring $K[t]$ we have 
\[
Q(f_1,\ldots,f_m) \equiv 0\bmod p,
\]
but $f_i\not\equiv 0 \bmod p$ for some $i$.  These relations will still
hold if we divide each $f_i$ by the greatest common divisor of all of 
the $f_i$, so we may assume that the $f_i$ have no nontrivial common
factor.

Let $k$ be the largest degree of the $f_i$, and for each $i$ let $b_i$ 
be the coefficient of $t^k$ in $f_i$.  Then 
\[
Q(f_1,\ldots,f_m) = Q(b_1,\ldots,b_m) t^{2k} + 
                      \text{(lower order terms)},
\]
and since $Q$ does not represent $0$ over $K$ we see that 
$Q(f_1,\ldots,f_m)$ is a polynomial of degree~$2k<2n$.  Thus we have 
$Q(f_1,\ldots,f_m) = p q$ for some polynomial $q$ whose degree is odd 
and is less than $n$.  One of the irreducible factors of $q$ must also 
have odd degree less than $n$.  Let $r$ be one such factor of~$q$.

Let $L'$ be the extension of $K$ defined by~$r$, and let $\alpha'$ be a 
root of $r$ in~$L'$.  For each $i$ let $\beta_i' = f_i(\alpha')$.  The 
$\beta_i'$ are not all $0$ because the $f_i$ are not all divisible 
by~$r$.  But $Q(\beta_1',\ldots,\beta_m') = 0$, so $L'$ is an extension 
of $K$ of odd degree less than $n$ for which the lemma fails.  This 
contradicts the definition of~$n$.
\end{proof}

\begin{remark}
For an amusing exercise, the reader should determine how the proof fails
without the assumption that $n$ is odd.
\end{remark}

As a corollary to Springer's theorem, we get a special case of 
Theorem~\ref{T:genus0}.

\begin{corollary}
\label{C:conic}
Let $C$ be a genus-$0$ curve defined over a field $K$.  If $C$ has a 
point over an odd-degree extension $L$ of $K$, then $C$ has a point 
over~$K$.
\end{corollary}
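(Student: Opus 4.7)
The plan is to combine the two previous lemmas directly. By Lemma~\ref{L:conic}, I would first embed $C$ as a smooth conic in $\PP^2_K$, cut out by a homogeneous quadratic polynomial $Q(x,y,z)$ in three variables with coefficients in $K$. A point of $C$ over $L$ then corresponds to a triple $(x_0,y_0,z_0)\in L^3$, not all zero, satisfying $Q(x_0,y_0,z_0)=0$; in other words, the quadratic form $Q$ represents $0$ nontrivially over $L$.

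Since $L/K$ has odd degree, Springer's theorem (Lemma~\ref{L:Springer}) applies and $Q$ represents $0$ nontrivially already over $K$. That nontrivial zero gives a $K$-rational point of $C\subset\PP^2_K$, which is what we wanted.

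The only potential subtlety is verifying that the embedding of $C$ as a conic produces an \emph{honest} ternary quadratic form over $K$ (so that Springer applies to a form over $K$, not only over $L$), but this is already guaranteed by Lemma~\ref{L:conic}. There is no real obstacle: the corollary is essentially an immediate consequence of the conic embedding plus Springer's theorem.
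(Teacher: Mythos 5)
Your argument is correct and is exactly the paper's proof: embed $C$ as a conic in $\PP^2_K$ via Lemma~\ref{L:conic}, observe that an $L$-point gives a nontrivial zero of the ternary quadratic form over $L$, and apply Springer's theorem (Lemma~\ref{L:Springer}) to descend to $K$. No differences worth noting.
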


\begin{proof}
Lemma~\ref{L:conic} shows that $C$ has can be written as a conic 
in~$\PP^2$, so there is a $3$-variable quadratic form $Q$ that gives an
equation for $C$.  If $C$ has a point over $L$ then this quadratic form
has a nontrivial zero over $L$.  Springer's theorem then shows that the
quadratic form has a nontrivial zero over $K$, so $C$ has a point 
over~$K$.
\end{proof}

We need one more lemma before we can prove Theorem~\ref{T:genus0}.

\begin{lemma}
\label{L:conicautomorphism}
Suppose $C$ is a genus-$0$ curve over a field $K$, and let $M$ be a
separable quadratic extension of $K$ over which $C$ has points.  Let $P$
and $Q$ be points of $C(M)$ that do not lie in $C(K)$ and that 
are not\/ $\Gal M/K$-conjugates of one another.  Then there is a 
\textup{(}$K$-defined\/\textup{)} automorphism $\alpha$ of $C$ such that
$\alpha(P) = Q$.
\end{lemma}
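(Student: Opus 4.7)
The plan is to reduce the problem to an explicit Möbius-transformation calculation by passing to $\PP^1$ over $M$. Since $C$ has the point $P$ over $M$, Lemma~\ref{L:conic} provides an isomorphism $\phi\colon C_M\to\PP^1_M$, and I would choose $\phi$ so that $\phi(P)=0$ and $\phi(\bar P)=\infty$. Let $\sigma$ denote the nontrivial element of $\Gal M/K$. The $K$-structure on $C$ endows $C_M$ with a semilinear involution over $\sigma$; via $\phi$, this becomes a semilinear involution $\tau$ of $\PP^1_M$ that swaps $0$ and $\infty$. Writing $\tau$ as the canonical Galois action on $\PP^1_M$ composed with an element $u\in\PGL_2(M)$, the condition that $u$ swap $0$ and $\infty$ forces $u(t)=\lambda/t$ for some $\lambda\in M^*$, so that $\tau$ sends a point with coordinate $r$ to one with coordinate $\bar\lambda/\bar r$. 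The involutivity $\tau^2=\mathrm{id}$ then forces $\lambda=\bar\lambda$, i.e., $\lambda\in K^*$.

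Now let $\beta\in M\cup\{\infty\}$ be the coordinate of $Q$ under $\phi$. The hypotheses translate as follows: $Q\neq\bar P$ gives $\beta\neq\infty$, and $Q\notin C(K)$ gives $\beta\bar\beta\neq\lambda$. If $\beta=0$ then $Q=P$ and $\alpha=\mathrm{id}$ works, so I may assume $\beta\in M^*$. I would then propose the Möbius transformation
\[
\alpha(r)=\frac{\lambda(r+\beta)}{\bar\beta\,r+\lambda}
\]
and verify three things: (i)~$\alpha(0)=\beta$, so $\alpha(P)=Q$; (ii)~the determinant $\lambda(\lambda-\beta\bar\beta)$ of the matrix representing $\alpha$ is nonzero, so $\alpha$ is a well-defined automorphism of $\PP^1_M$; and (iii)~$\tau\alpha=\alpha\tau$, so that by Galois descent $\alpha$ arises from a $K$-defined automorphism of $C$.

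The main obstacle is the commutation check (iii). A direct computation shows that both $\tau\alpha(r)$ and $\alpha\tau(r)$ simplify to $(\beta\bar r+\lambda)/(\bar r+\bar\beta)$; the key ingredient is $\bar\lambda=\lambda$, which is precisely what the normalization of $\tau$ delivered. Once this is in place, the other verifications are mechanical, and Galois descent produces the desired $K$-automorphism of $C$.
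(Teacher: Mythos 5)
Your proof is correct, but it takes a genuinely different route from the paper's. The paper considers the subvariety $V$ of $\Aut(C)$ consisting of \emph{involutions} $\alpha$ with $\alpha(P)=Q$ and $\alpha(P^\sigma)=Q^\sigma$, notes that these conditions are Galois-stable so $V$ is defined over $K$, and then observes that after identifying $C_M$ with $\PP^1_M$ (sending $P,P^\sigma,Q$ to $0,\infty,1$ and $Q^\sigma$ to some $a$) this $V$ becomes the variety of involutions sending $0\mapsto\infty$, $1\mapsto a$, which has exactly one geometric point ($x\mapsto a/x$); a $K$-variety with a single geometric $M$-point has that point $K$-rational. You instead make the descent datum completely explicit: you normalize so that $\tau(r)=\bar\lambda/\bar r$ with $\lambda=\bar\lambda\in K^*$, read off the constraints $\beta\ne\infty$ and $\beta\bar\beta\ne\lambda$ from the hypotheses on $Q$, and write down the Möbius map $\alpha(r)=\lambda(r+\beta)/(\bar\beta r+\lambda)$, whose determinant $\lambda(\lambda-\beta\bar\beta)$ is nonzero and which commutes with $\tau$ by the computation $\tau\alpha(r)=\alpha\tau(r)=(\beta\bar r+\lambda)/(\bar r+\bar\beta)$. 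One small contrast worth noting: your $\alpha$ is generally \emph{not} an involution, whereas the paper deliberately imposes $\alpha^2=1$ to cut $V$ down to a single geometric point — that uniqueness is what lets the paper avoid any explicit verification of Galois equivariance. Your version avoids the scheme-theoretic ``unique geometric point is rational'' step, at the cost of a short but concrete commutation check, and has the advantage of producing a closed formula for $\alpha$.
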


\begin{proof}
If $P = Q$ we simply take $\alpha$ to be the identity, so we may assume
that $P\neq Q$.

Let $\sigma$ be the nontrivial element of $\Gal M/K$.  Consider the 
subvariety $V$ of the variety $\Aut(C)$ defined by
\[
\{ \alpha \in \Aut(C) : \alpha(P) = Q, 
                          \quad\alpha(P^\sigma) = Q^\sigma,
                          \quad\alpha^2 = 1\}.
\]                          
The conditions defining $V$ are stable under $\Gal M/K$, so $V$ is
defined over~$K$.  Over $M$, there is a constant $a\neq 0,1$ such that
$V$ is isomorphic to the subvariety of $\Aut \PP^1$ consisting of 
involutions that send $0$ to $\infty$ and $1$ to $a$; clearly, this
latter variety has exactly one geometric point, corresponding to the
involution $x \mapsto a/x$.  So $V$ has one geometric point, which is
defined over~$M$.  It follows that this single point is defined 
over~$K$.  The lemma follows.
\end{proof}

\begin{proof}[Proof of Theorem~\textup{\ref{T:genus0}}]
We are given genus-$0$ curves $C$ and $D$ over a field $K$ that become
isomorphic to one another over an odd-degree extension $L$ of $K$, and
we are to show that $C$ and $D$ are isomorphic over $K$.  If either $C$
or $D$ is isomorphic to $\PP^1$, then the desired result follows from 
Corollary~\ref{C:conic} and Lemma~\ref{L:conic}.  Thus we may assume
that neither $C$ nor $D$ has a $K$-rational point.

Let $M$ be a separable quadratic extension over which $C$ has rational
points; the fact that $C$ can be represented as a nonsingular conic
shows that such fields exist.  Let $P$ be an element of $C(M)$, and let
$N$ be the compositum of $L$ and $M$ over $K$.  Let $\varphi$ be an
isomorphism from $C_L$ to $D_L$.  Then $\varphi(P)$ is an $N$-rational
point of $D$.  

We see that the curve $D_M$ over $M$ has an $N$-rational point, where
$\indexin{N}{M} = \indexin{L}{K}$ is odd.  Corollary~\ref{C:conic} shows
that therefore $D_M$ has a point over $M$; in other words, there is an
$M$-rational point $Q$ on $D$.  Replacing $Q$ with its $M/K$-conjugate,
if necessary, we may assume that $Q$ is not the $N/L$-conjugate 
of~$\varphi(P)$.

Applying Lemma~\ref{L:conicautomorphism} to the curve $D_L$ over $L$ and
the $N$-rational points $\varphi(P)$ and $Q$ of $D_L$, we find that 
there is an automorphism $\alpha$ of $D_L$ that sends $\varphi(P)$ 
to~$Q$.  Replacing $\varphi$ with $\alpha\varphi$, we find that 
$\varphi$ is an isomorphism $C_L \to D_L$ that takes $P$ to~$Q$.

Consider the subvariety $V$ of the variety $\Hom(C,D)$ defined by
\[
\{ \psi \in \Hom(C,D) : \psi(P) = Q, 
                          \quad\psi(P^\sigma) = Q^\sigma\},
\]                  
where again we take $\sigma$ to be the nontrivial element of $\Gal M/K$.
The conditions defining $V$ are stable under $\Gal M/K$, so $V$ is 
defined over~$K$.  Over $M$, there is a constant $a\neq 0,1$ such that
$V$ is isomorphic to the subvariety of $\Aut \PP^1$ consisting of 
automorphisms that send $0$ to $\infty$ and $1$ to $a$; it is easy to 
see that this latter variety is isomorphic to $\PP^1$ minus two points.
Thus, our variety $V$ is isomorphic to a genus-$0$ curve $W$ with two
points removed.  Our isomorphism $\varphi:C_L\to D_L$ shows that the 
variety $V$ has a point over $L$, so the genus-$0$ curve $W$ has a point
over $L$ as well.  By Corollary~\ref{C:conic}, $W$ has points over~$K$,
and so is isomorphic to~$\PP^1$.  But over any field, $\PP^1$ minus two
points has rational points, so $V$ has a $K$-rational point.  Therefore 
$C$ is isomorphic to $D$ over~$K$.
\end{proof}

Over finite fields we can say more than is claimed by 
Theorem~\ref{T:genus0}. By Wedderburn's theorem, there are no nontrivial
quaternion algebras over a finite field.  Therefore, there are no 
nontrivial twists of $\PP^1$ over a finite field.

\section{Group theory}
\label{S:grouptheory}

In this section we state and prove several group-theoretical results
that are needed for the proof of Theorems~\ref{T:automorphism} 
and~\ref{T:genus1}.  First some notation: if $x$ and $y$ are elements of
a group~$G$, we write $x\sim y$ if $x$ and $y$ are conjugate in~$G$.

Suppose $G$ is a finite group and $\alpha$ is an automorphism of $G$.
We view $G$ as a topological group by endowing it with the discrete 
topology.  There is a continuous homomorphism $\hat{\Z}\to \Aut G$ that
sends $1$ to $\alpha$.  Using this action, we may consider the pointed
cohomology set $H^1(\hat{\Z},G)$ defined in Section~\ref{S:twists}; we
denote this pointed set by $H^1(\hat{\Z},G,\alpha)$ to specify the 
$\hat{\Z}$-action.  We identify a cocycle $a:\hat{\Z}\to G$ with 
$a(1)\in G$, and given a cocycle $x\in G$, we let $[x]_\alpha$ denote
its class in $H^1(\hat{\Z},G,\alpha)$.

\begin{theorem}
\label{T:cocycles}
Let $G$ and $\alpha$ be as above, and let $r>1$ and $s>1$ 
be integers that are coprime to one another.
Suppose $x$ and $y$ are two cocycles such that 
$[x x^\alpha \cdots x^{\alpha^{r-1}}]_{\alpha^r} = 
     [y y^\alpha \cdots y^{\alpha^{r-1}}]_{\alpha^r}$
and
$[x x^\alpha \cdots x^{\alpha^{s-1}}]_{\alpha^s} = 
     [y y^\alpha \cdots y^{\alpha^{s-1}}]_{\alpha^s}$
but such that for every proper divisor $d$ of $r$ or of $s$ we have
$[x x^\alpha \cdots x^{\alpha^{d-1}}]_{\alpha^d} \neq
     [y y^\alpha \cdots y^{\alpha^{d-1}}]_{\alpha^d}$.
Then the order of $G$ is divisible by $rs$ but is not equal to $rs$.  
Furthermore, if $rs\equiv 2\bmod 4$ then the order of $G$ is divisible by $2rs$.
\end{theorem}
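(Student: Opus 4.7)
The approach is to reformulate everything in the semidirect product $\widetilde G := G \rtimes \langle\alpha\rangle$. For a cocycle $x$, set $x' := x\alpha \in G\alpha\subset\widetilde G$; a direct computation shows $(x')^n = x_n\,\alpha^n$, and $x$ is cohomologous to $y$ in $H^1(\widehat{\Z},G,\alpha)$ if and only if $x'$ and $y'$ are conjugate in $\widetilde G$ by an element of $G$ (call this ``$G$-conjugate''). In this language, the hypotheses become: $x'$ and $y'$ are not $G$-conjugate (the $d=1$ case), the pairs $(x')^r,(y')^r$ and $(x')^s,(y')^s$ are $G$-conjugate, but no $d$-th powers are $G$-conjugate for $d$ a proper divisor of $r$ or of $s$.

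For the divisibility $rs \mid |G|$, I would choose $c_1 \in G$ with $(y')^r = c_1^{-1}(x')^r c_1$, set $z' := c_1^{-1}x'c_1$, and consider $u_1 := z'(y')^{-1}\in G$. Then $u_1 \neq 1$ (else $z' = y'$ would be $G$-conjugate to $x'$, contradicting the $d=1$ case), and expanding $(u_1 y')^r = (z')^r = (y')^r$ in $\widetilde G$ yields the relation
\[
u_1 \cdot \bigl(y' u_1 (y')^{-1}\bigr) \cdot \bigl((y')^2 u_1 (y')^{-2}\bigr) \cdots \bigl((y')^{r-1} u_1 (y')^{-(r-1)}\bigr) = 1.
\]
The analogous construction with $s$ replacing $r$ gives a nontrivial $u_2 \in G$ satisfying a corresponding relation of length $s$. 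The plan is to extract, from these two relations together with the proper-divisor hypotheses forbidding any ``shorter'' relation of the same form, the divisibilities $r \mid |G|$ and $s \mid |G|$. Since $\gcd(r,s)=1$, this forces $rs \mid |G|$.

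To rule out $|G| = rs$, I would assume equality for contradiction: the coprimality of $r,s$ together with Schur--Zassenhaus gives $G$ a Hall decomposition in which elements split canonically into coprime-order components, and combining $(x')^r \sim_G (y')^r$ with $(x')^s \sim_G (y')^s$ via the Bezout identity $ar+bs = 1$ should produce a single element of $G$ witnessing $x' \sim_G y'$, a contradiction. For the refinement when $rs \equiv 2\bmod 4$, exactly one of $r,s$ is even and $\equiv 2\bmod 4$; say $r = 2r'$ with $r'$ and $s$ odd. Since $r'$ is a proper divisor of $r$, the elements $(x')^{r'}$ and $(y')^{r'}$ are not $G$-conjugate, yet their squares are. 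Running the Step 1 construction on this pair produces a nontrivial $v \in G$ satisfying $(y')^{r'}\, v\, (y')^{-r'} = v^{-1}$, a dihedral-type relation which, combined with the element of order divisible by $r = 2r'$ already produced in Step 1, should force an additional factor of $2$ in $|G|$ beyond the one already present, giving $2rs \mid |G|$.

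The main obstacle will be the extraction claim in Step 1: the length-$r$ relation on $u_1$ does not immediately yield an element of $G$ of order divisible by $r$, since in a noncommutative group the action of $y'$ by conjugation can effectively shorten the relation (for example if $y'$ centralizes $u_1$, the relation reduces to $u_1^r = 1$). One must carefully translate such a shortening back to cohomology to see that it would equate $[x_d]_{\alpha^d}$ with $[y_d]_{\alpha^d}$ for a proper divisor $d$ of $r$, contradicting hypothesis. The parity refinement in Step 3 is similarly delicate: ensuring that the factor of $2$ produced by the $v$-relation is genuinely independent of the one already present in $r=2r'$ requires tracking the $2$-adic valuations of element orders in $G$, and is where I expect the argument to require the most care.
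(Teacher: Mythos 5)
Your reformulation in the semidirect product $\widetilde G = G \rtimes \langle\alpha\rangle$ is exactly the paper's first move (its Lemma~\ref{L:semidirect}), and correctly translates the cohomological hypotheses into conjugacy of powers of $X=(x,\alpha)$ and $Y=(y,\alpha)$. After that, however, your argument has real gaps at each of the three steps, and you yourself flag the central one.

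The main problem is Step~1. The ``length-$r$ relation'' $u_1 \cdot (y'u_1(y')^{-1}) \cdots ((y')^{r-1}u_1(y')^{-(r-1)})=1$ does not yield $r \mid |G|$ by itself, and your plan to ``extract'' it by ruling out shorter relations is precisely the missing content. The paper's route (its Lemma~\ref{L:conjugacy}) is quite different: first reduce to the case where $r$ is a prime power $p^k$ by passing to divisors; then, after conjugating so that $X^r = Y^r$, pass to the centralizer $C_A(X^r)$; split $X$ and $Y$ into $p$-parts and $p'$-parts so that the $p'$-parts agree and lie in the center; reduce to the case that $A$ is a $p$-group; and finally induct on $\#A$ using a central subgroup $Z$ of order $p$ in $G$, showing that in $A/Z$ the least divisor at which the images become conjugate is $r$ or $r/p$. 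Without some such structural reduction, the word relation on $u_1$ can genuinely shorten (as you note, if $y'$ centralizes $u_1$ it collapses to $u_1^r=1$, and in general the conjugates of $u_1$ can interact noncommutatively); nothing in your sketch controls that.

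Step~2 has a separate gap: Schur--Zassenhaus requires a \emph{normal} Hall subgroup of $G$, which is not a hypothesis --- a group of order $rs$ with $\gcd(r,s)=1$ need not split into Hall subgroups. The paper instead uses the fact that a nontrivial finite group has no automorphism of order $\geq$ its order (Horo\v{s}evski\u{\i}), applied to the conjugation action of $X$ on $G$, to find $m<rs$ with $X^m$ central in $A$, and then combines $X^r=Y^r$ with $Y^m=X^m$ to force $X^g=Y^g$ for a proper divisor $g$ of $r$, a contradiction. Your Bezout idea would need to be carried out inside such a structural framework to work.

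Step~3 is similarly incomplete. The dihedral-type relation $(y')^{r'}v(y')^{-r'}=v^{-1}$ you derive does not by itself account for an \emph{independent} factor of~$2$; $v$ could, for example, already lie in a subgroup whose $2$-part was counted by Step~1. The paper's Lemma~\ref{L:conjugacy2} takes a different route: under the contrary assumption $4 \nmid \#G$, the set of odd-order elements of $G$ is a characteristic index-$2$ subgroup $O$, normal in $A$; one shows $Y=gX$ with $g\in O$ by looking at images in the abelian group $A/O$; and then applying Lemma~\ref{L:conjugacy} to the smaller pair $(O\cdot\langle X\rangle, O)$ with the exponent $2$ yields $2 \mid \#O$, a contradiction. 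This bookkeeping via the characteristic subgroup $O$ is what makes the extra factor of $2$ genuinely independent; your sketch does not supply an analogue.

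In short: the translation to the semidirect product matches the paper, but the three substantive group-theoretic lemmas (reduction to $p$-groups plus induction; the automorphism-order bound for ruling out $\#G=rs$; the odd-order characteristic subgroup for the $4\mid\#G$ refinement) are all absent, and the steps you propose in their place either have acknowledged gaps or rest on inapplicable tools.
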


Our proof of Theorem~\ref{T:cocycles} relies on three lemmas, which
we prove later in this section.  The first lemma allows us to rephrase
the statement of the theorem in terms of conjugacy in an extension of~$G$.

\begin{lemma}
\label{L:semidirect}
Suppose $G$ is a finite group and $\alpha$ is an element of\/ $\Aut G$.
Let\/ $\hat{\Z}$ act on $G$ by having $1\in\hat{\Z}$ act as $\alpha$.
Let $x$ and $y$ be elements of $G$ and let $m$ be a positive integer.
Then the cocycles\/ $\hat{\Z}\to G$
determined by $x x^\alpha \cdots x^{\alpha^{m-1}}$
and $y y^\alpha \cdots y^{\alpha^{m-1}}$ are cohomologous in 
$H^1(\hat{\Z},G,\alpha^m)$
if and only if the $m$-th powers of the
elements $(x,\alpha)$ and $(y,\alpha)$ of the 
semidirect product $A = G \rtimes \langle\alpha\rangle$ are conjugate to
one another in $A$.
\end{lemma}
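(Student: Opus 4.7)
The plan is to compute both sides of the claimed equivalence explicitly inside the semidirect product. Using the multiplication rule $(g_1,\alpha^{i_1})(g_2,\alpha^{i_2}) = (g_1\alpha^{i_1}(g_2),\alpha^{i_1+i_2})$ in $A = G \rtimes \langle\alpha\rangle$, a straightforward induction shows that $(x,\alpha)^m = (X,\alpha^m)$ with $X = x x^\alpha \cdots x^{\alpha^{m-1}}$, and similarly $(y,\alpha)^m = (Y,\alpha^m)$ with $Y = y y^\alpha \cdots y^{\alpha^{m-1}}$. Since $\hat\Z$ acts on $G$ via $\alpha^m$, the cocycles represented by $X$ and $Y$ are cohomologous in $H^1(\hat\Z,G,\alpha^m)$ precisely when there is some $c \in G$ with $Y = c^{-1} X c^{\alpha^m}$.

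For the easy direction, given such a $c$ I would simply verify by direct multiplication that $(c,1)^{-1}(X,\alpha^m)(c,1) = (c^{-1} X \alpha^m(c), \alpha^m) = (Y,\alpha^m)$, exhibiting the required conjugacy in $A$.

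The reverse direction is the main obstacle, because a conjugating element in $A$ can have the form $(c,\alpha^k)$ with $k$ not necessarily zero, whereas a cohomology between cocycles only allows intertwiners of the shape $(c,1)$. Computing $(c,\alpha^k)^{-1}(X,\alpha^m)(c,\alpha^k)$ and equating it to $(Y,\alpha^m)$, I would obtain (after applying $\alpha^k$ to the resulting $G$-level identity) the relation $c^{-1} X c^{\alpha^m} = Y^{\alpha^k}$. This shows that $X$ is cohomologous to the twisted element $Y^{\alpha^k}$, but not yet to $Y$ itself. To bridge the gap I would exploit the explicit product form of $Y$: setting $c' = y y^\alpha \cdots y^{\alpha^{k-1}}$ (assuming $k \ge 0$, as we may since $\alpha$ has finite order in $\Aut G$), a telescoping cancellation on the left together with the factor $c'^{\alpha^m} = y^{\alpha^m} y^{\alpha^{m+1}} \cdots y^{\alpha^{m+k-1}}$ on the right yields
\[
c'^{-1} Y c'^{\alpha^m} = y^{\alpha^k} y^{\alpha^{k+1}} \cdots y^{\alpha^{m+k-1}} = Y^{\alpha^k}.
\]
Hence $Y^{\alpha^k}$ and $Y$ are cohomologous, so combining this with the relation from the semidirect-product computation gives $Y = (c c')^{-1} X (c c')^{\alpha^m}$, completing the proof.
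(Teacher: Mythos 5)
Your proof follows essentially the same route as the paper's: both compute the conjugation $(c,\alpha^k)^{-1}(X,\alpha^m)(c,\alpha^k)$ explicitly in the semidirect product, and both rely on the same telescoping computation to absorb the $\langle\alpha\rangle$-component of the conjugating element. The paper simply defines the cohomology intertwiner $z = x^{-\alpha^{-1}}\cdots x^{-\alpha^{-k}}w^{\alpha^{-k}}$ directly using a product of $x$-terms, whereas you pass through the intermediate element $Y^{\alpha^k}$ and use a product of $y$-terms; these are morally identical. The easy direction and the relation $c^{-1}Xc^{\alpha^m}=Y^{\alpha^k}$ are correct, as is the identity $c'^{-1}Yc'^{\alpha^m}=Y^{\alpha^k}$ for $c'=yy^\alpha\cdots y^{\alpha^{k-1}}$, which indeed follows from the telescoping you describe (or more slickly from comparing the $G$-components of $(y,\alpha)^m(y,\alpha)^k$ and $(y,\alpha)^k(y,\alpha)^m$).

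There is, however, a small slip at the very end. From $Y^{\alpha^k}=c^{-1}Xc^{\alpha^m}$ and $Y^{\alpha^k}=c'^{-1}Yc'^{\alpha^m}$ you should solve for $Y$ by writing $Y = c'\,Y^{\alpha^k}\,(c')^{-\alpha^m} = c'c^{-1}Xc^{\alpha^m}(c')^{-\alpha^m}$, which gives $Y = d^{-1}Xd^{\alpha^m}$ with $d = c(c')^{-1}$, not $d = cc'$. With $d=cc'$ one instead gets $(c')^{-1}Y^{\alpha^k}(c')^{\alpha^m}$, which differs from $Y$ by a further conjugation. The fix is immediate and does not affect the structure of the argument.
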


The second lemma allows us to deduce much of Theorem~\ref{T:cocycles}
by looking at the conditions on $r$ and on $s$ separately.

\begin{lemma}
\label{L:conjugacy}
Let $A$ be a finite group with a normal subgroup $G$ such that
$A/G$ is cyclic.  Let $X$ and $Y$ be elements of $A$ that have the 
same image in $A/G$.  Suppose there is a positive integer $r$ 
such that $X^r\sim Y^r$ but $X^d\not\sim Y^d$
for all proper divisors $d$ of~$r$.  Then $r$ divides $\#G$.
\end{lemma}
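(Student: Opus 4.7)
The plan is to use induction on $r$, and first reduce to the case where $r$ is a prime power. If $r = ab$ with $a, b > 1$ coprime, apply the lemma to the pair $(X^a, Y^a)$ with new exponent $b$: the hypothesis transfers cleanly because for any proper divisor $d$ of $b$, the product $ad$ is a proper divisor of $r$, forcing $(X^a)^d = X^{ad} \not\sim Y^{ad} = (Y^a)^d$. Induction then yields $b \mid \#G$, and symmetrically $a \mid \#G$, so $r = ab \mid \#G$. Hence I may assume $r = p^k$ is a prime power.

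Next, I would replace $Y$ by an $A$-conjugate so that $Y^r = X^r$; this is permitted because conjugation in $A$ preserves the coset $XG$ (as $A/G$ is abelian). Writing $Y = XZ$ with $Z \in G$, and noting $Z \ne 1$ since $X \not\sim Y$, the identity
\[(Xg)^n = X^n \cdot \alpha^{n-1}(g)\,\alpha^{n-2}(g) \cdots \alpha(g)\, g, \quad \alpha(g) = X^{-1}gX,\]
converts $(XZ)^r = X^r$ into the norm relation $\alpha^{r-1}(Z)\,\alpha^{r-2}(Z) \cdots \alpha(Z)\, Z = 1_G$.

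For the base case $r = p$, suppose toward contradiction that $p \nmid \#G$. When $p = 2$ the relation becomes $\alpha(Z) = Z^{-1}$, i.e.\ $X^{-1}ZX = Z^{-1}$; a direct computation then gives $Z^n X Z^{-n} = X Z^{-2n}$, and since $|Z|$ divides the odd number $\#G$ the congruence $-2n \equiv 1 \pmod{|Z|}$ is solvable, exhibiting $Y = XZ$ as a conjugate of $X$ in $A$ by a power of $Z$, which contradicts $X \not\sim Y$. For general $p$ with $\gcd(p, \#G) = 1$, an analogous argument shows that the length-$p$ norm relation forces $Z$ to be of the form $\alpha(h)\, h^{-1}$ for some $h \in G$; then $Y = XZ = hXh^{-1}$, again a contradiction. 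This is the nonabelian analogue of the standard vanishing of $H^1$ in coprime order, and reduces to a straightforward averaging once the appropriate semidirect product is set up.

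The main obstacle is the inductive step $r = p^k$ with $k \ge 2$. Applying induction to $(X^p, Y^p)$, which inherits the hypothesis with exponent $p^{k-1}$, gives only $p^{k-1} \mid \#G$, short of what is needed. To bridge the gap and obtain the full $p^k \mid \#G$, I would exploit the entire chain of nonconjugacy conditions $X^{p^i} \not\sim Y^{p^i}$ for $0 \le i < k$. After reducing to the case where $X^r$ lies in $Z(A)$, one can pass to quotients by central cyclic subgroups and verify that the $k$ levels of nonconjugacy produce $k$ independent obstructions, each contributing a factor of $p$ to the Sylow $p$-subgroup of $G$. The delicate point, and the main technical hurdle, is verifying that the nonconjugacy hypotheses survive these quotients at every intermediate level; this is where the cyclicity of $A/G$ is essential, since it ensures that conjugation in $A$ always respects the coset structure that drives the chain of power relations.
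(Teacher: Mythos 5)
Your reduction to the prime-power case $r=p^k$ matches the paper's, but the inductive strategy you propose afterward does not close, and you essentially say so yourself. The paper does something structurally different: after conjugating $Y$ so that $X^r=Y^r$ and passing to $C_A(X^r)$, it splits $X$ and $Y$ into commuting $p$- and $p'$-parts, shows the $p'$-parts coincide and are central, places the $p$-parts in a common Sylow $p$-subgroup, and thereby reduces to the case where $A$ is a $p$-group; it then inducts on $\#A$, not on $r$. You instead keep $A$ general and try to induct on $r$. That choice is what creates your "main technical hurdle": after quotienting by a central order-$p$ subgroup $Z\le G$, the minimal $s$ dividing $r$ with $x^s\sim y^s$ in $A/Z$ need not be $r/p$ --- it can equal $r$ itself, in which case an induction on $r$ gives you nothing (same exponent, new group). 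An induction on $\#A$ handles both possibilities at once: from $X^s=cY^s$ with $c\in Z$ one gets $X^{sp}=Y^{sp}$, forcing $s\in\{r,r/p\}$, and then $s\mid\#(G/Z)$ together with $\#G=p\,\#(G/Z)$ gives $r\mid\#G$. Moreover, without first reducing to a $p$-group you have no guarantee that $G$ even contains a central cyclic subgroup of $p$-power order on which to base the quotient.

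There is also a smaller gap in your base case $r=p$ for odd $p$. The "averaging" argument for the vanishing of the relevant $H^1$ requires the action to be by a group of order coprime to $\#G$; concretely you need the automorphism $\alpha=X^{-1}(\,\cdot\,)X$ of $G$ to have order dividing $p$. This is not automatic: $\alpha^p$ is conjugation by $X^p$, which is trivial on $G$ only after you have replaced $A$ by $C_A(X^p)$. You perform that centralization in the inductive step but not in the base case, so as written the coprime-order cohomology vanishing does not apply. Once the centralization is done first, the $p=2$ computation and the $p>2$ averaging both become legitimate --- but, as noted above, the base case is not where the real difficulty lies.
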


Finally, the third lemma gives us stronger information at the prime $2$.

\begin{lemma}
\label{L:conjugacy2}
Let $A$ be a finite group with a normal subgroup $G$ such that
$A/G$ is cyclic.  Let $X$ and $Y$ be elements of $A$ that have the 
same image in in $A/G$.
Suppose that $X^2\sim Y^2$ and that $X^r\sim Y^r$ for some odd integer $r>1$,
but that $X\not\sim Y$.
Then $4$ divides $\#G$.
\end{lemma}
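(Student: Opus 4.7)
The plan is an argument by contradiction. Suppose $4 \nmid \#G$. By Lemma~\ref{L:conjugacy} applied with $r = 2$ (using $X^2 \sim Y^2$ and $X \not\sim Y$) one already has $2 \mid \#G$, so $\#G = 2m$ with $m$ odd. Hence $G$ possesses a characteristic normal $2$-complement $G_1$ of order $m$, which is therefore normal in $A$. The quotient $\bar A := A/G_1$ fits in $1 \to \Z/2 \to \bar A \to A/G \to 1$, and this extension is central (since $G/G_1$ is normal of order $2$ in $\bar A$ and $\Aut(\Z/2)$ is trivial), so $\bar A$ is abelian.

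I then use each conjugacy hypothesis in turn. Passing $X^r \sim Y^r$ to the abelian group $\bar A$ yields $(\bar X \bar Y^{-1})^r = 1$; since $\bar X \bar Y^{-1}$ lies in $G/G_1 \cong \Z/2$ and $r$ is odd, this forces $\bar X = \bar Y$, i.e., $YX^{-1} \in G_1$. Using $X^2 \sim Y^2$, choose $a \in A$ with $a X^2 a^{-1} = Y^2$ and set $X' := a X a^{-1}$, so that $(X')^2 = Y^2$ on the nose; abelianness of $\bar A$ still gives $\bar{X'} = \bar Y$, whence $W := Y (X')^{-1} \in G_1$. Expanding $(X')^2 = (W X')^2 = W \cdot (X' W X'^{-1}) \cdot (X')^2$ then yields the crucial inversion relation $X' W X'^{-1} = W^{-1}$.

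The contradiction will come from producing an explicit conjugator. Since $W \in G_1$ has odd order $n$, the integer $2$ is invertible modulo $n$; choose $k$ with $2k \equiv 1 \pmod n$ and set $b := W^k$. Iterating the inversion relation gives $X' W^{-k} = W^k X'$, so $b X' b^{-1} = W^k X' W^{-k} = W^{2k} X' = W X' = Y$. Hence $X' \sim Y$ in $A$, and since $X' \sim X$ by construction, $X \sim Y$, contradicting the hypothesis.

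The main obstacle I expect is the careful bookkeeping that shows both hypotheses are essential: $X^r \sim Y^r$ for odd $r$ is exactly what drives $W$ into the odd-order subgroup $G_1$, while $X^2 \sim Y^2$ is what produces the inversion relation $X' W X'^{-1} = W^{-1}$, and only once both are in hand does conjugation by $W^k$ collapse the relation $bX'b^{-1} = W^{2k}X'$ down to $Y$.
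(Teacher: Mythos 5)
Your proof is correct, and it shares the paper's key structural ideas: after using Lemma~\ref{L:conjugacy} to get $2\mid\#G$, you pass (under the assumption $4\nmid\#G$) to the characteristic odd-order subgroup $G_1$ (the paper calls it $O$), observe that $A/G_1$ is abelian, use $X^r\sim Y^r$ with $r$ odd to place the discrepancy $W$ in $G_1$, and use $X^2\sim Y^2$ to normalize to $(X')^2=Y^2$. The difference is in the endgame: the paper closes by applying Lemma~\ref{L:conjugacy} once more (with $r=2$) to the subgroup $A'=O\langle X\rangle$ with $G'=O$, concluding $2\mid\#O$ and contradicting oddness of $O$; you instead extract the inversion relation $X'WX'^{-1}=W^{-1}$ from $(X')^2=(WX')^2$ and explicitly exhibit the conjugator $b=W^k$ with $2k\equiv1\bmod n$, giving $bX'b^{-1}=W^{2k}X'=WX'=Y$. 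Your route is more self-contained and makes transparent exactly how the odd order of $W$ enables the conjugation; the paper's is shorter by reusing the earlier lemma. Worth noting: your calculation handles the odd-$\#G$ case too (then $G_1=G$), so the opening appeal to Lemma~\ref{L:conjugacy} for evenness is actually dispensable in your version, though harmless.
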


With these lemmas in hand, we can prove Theorem~\ref{T:cocycles}.

\begin{proof}[Proof of Theorem~\textup{\ref{T:cocycles}}]
Let $A$ be the semidirect product $G \rtimes \langle\alpha\rangle$
and let $X$ and $Y$ be the elements $(x,\alpha)$ and $(y,\alpha)$ of~$A$.
Lemma~\ref{L:semidirect} shows that
the hypothesis of the theorem is equivalent to the statement that 
$X^r\sim Y^r$  and $X^s\sim Y^s$ but that $X^d\not\sim Y^d$ for all 
proper divisors $d$ of $r$ or of~$s$.

Lemma~\ref{L:conjugacy} shows that $\#G$ is divisible by both
$r$ and $s$, and since $r$ and $s$ are coprime, we find that $rs$ divides~$\#G$.
Suppose one of $r$ and $s$ is congruent to $2$ modulo $4$, say $s\equiv 2\bmod 4$.
Applying Lemma~\ref{L:conjugacy2} to $X^{s/2}$ and $Y^{s/2}$, we see that
$\#G$ is divisible by $4$, so that $2rs$ divides~$\#G$.

Suppose that $\#G = rs$.  The element $X$ of $A$ acts on the normal subgroup
$G$ of $A$ by conjugation.  Theorem~$2$ of~\cite{horosevskii} says that an automorphism
of a nontrivial finite group has order less than that of the group
(this is also an easy consequence of~\cite{guralnick}*{Thm.~1}), so
there is a positive $m < rs$ 
such that $X^m$ acts trivially on $G$.  Since $r$ and $s$ are coprime to one another,
they both cannot divide $m$.  Switch $r$ and $s$, if necessary, so that $r$ does
not divide $m$.  Replacing $Y$ by a conjugate, we may assume that $X^r = Y^r$.

Since $X^m$ commutes with $X$, and since $A$ is generated by $G$ and $X$, we see that
$X^m$ lies in the center of $A$.  Since $Y^{mr}$ is conjugate to $X^{mr}$, we see that
$Y^{mr}=X^{mr}$.  Likewise, $Y^{ms}=X^{ms}$.  Since $r$ and $s$ are coprime to one
another, we find that $Y^m = X^m$.  Combining this with the fact that $X^r = Y^r$,
we find that $X^g = Y^g$, where $g = (m,r)$ is a proper divisor of $r$.  This
contradiction shows that~$\#G > rs$.
\end{proof}

We are left with the proofs of our three lemmas.

\begin{proof}[Proof of Lemma~\textup{\ref{L:semidirect}}]
We know from Section~\ref{S:twists} that the cocycles 
$x x^\alpha \cdots x^{\alpha^{m-1}}$
and $y y^\alpha \cdots y^{\alpha^{m-1}}$ 
are cohomologous if and only if there is a $z\in G$ such that 
\[
y y^\alpha \cdots y^{\alpha^{m-1}} = z^{-1}
                                     x x^\alpha \cdots x^{\alpha^{m-1}}
                                     z^{\alpha^m}.
\]                                     

Suppose the two cocycles are cohomologous, and let $z$ be as above.
Then in $A$ we have 
\begin{align*}
(y y^\alpha \cdots y^{\alpha^{m-1}} ,\alpha^m) 
  & = (z^{-1} x x^\alpha \cdots x^{\alpha^{m-1}} z^{\alpha^m}, \alpha^m)\\
  & = (z, 1)^{-1} (x x^\alpha \cdots x^{\alpha^{m-1}},\alpha^m) (z, 1).
\end{align*}
But $(y y^\alpha \cdots y^{\alpha^{m-1}} ,\alpha^m) = Y^m$
and $(x x^\alpha \cdots x^{\alpha^{m-1}},\alpha^m) = X^m$,
so $X^m$ and $Y^m$ are conjugate in $A$.

Conversely, suppose that $Y^m = W^{-1} X^m W$ for some $W = (w,\alpha^i)$ in~$A$,
where we choose $i\ge 0$.
Then we have
\[
y y^\alpha \cdots y^{\alpha^{m-1}} 
  = w^{-\alpha^{-i}} 
    x^{\alpha^{-i}} x^{\alpha^{1-i}}\cdots x^{\alpha^{m-1-i}}
    w^{\alpha^{m-i}}.
\]
Setting $z = x^{-\alpha^{-1}} x^{-\alpha^{-2}} \cdots x^{-\alpha^{-i}} w^{\alpha^{-i}}$,
we find that the right-hand side of the preceding equality is equal
to $z^{-1}x x^\alpha \cdots x^{\alpha^{m-1}}z^{\alpha^m},$
so the two cocycles are cohomologous.
\end{proof}

\begin{proof}[Proof of Lemma~\textup{\ref{L:conjugacy}}]
The lemma is trivial when $r=1$, so we assume throughout the proof
that $r > 1$.
To prove the first statement of the lemma for a given $X$, $Y$, and $r$,
it suffices to prove the statement for $X'=X^d$, $Y'=Y^d$, and $r'=r/d$, 
for all divisors $d$ of $r$ such that $r/d$ is a prime power.
Thus we may assume that $r$ is a power of a prime~$p$.

By replacing $Y$ with a conjugate element, we may assume that $X^r = Y^r$.
Then $X$ and $Y$ both lie in the centralizer $C_A(X^r)$ of $X^r$ in $A$,
and it will suffice to prove the lemma with $A$ replaced by $C_A(X^r)$ and
$G$ with $G\cap C_A(X^r)$.  (Note that the hypotheses of the lemma still
hold when we make these replacements.)

Now write $X=X_1 X_2$, where $X_1$ and $X_2$ are powers of $X$ such that the
order of $X_1$ is a power of $p$ and the order of $X_2$ is coprime to~$p$.
Likewise, write $Y = Y_1 Y_2$.  Since $X^r = Y^r$, we have $X_2 = Y_2$; 
furthermore, this element is a power of $X^r$ and so lies in the 
center of $A$.

We claim that $X_1^r = Y_1^r$ and that $X_1^d\not\sim Y_1^d$
for all proper divisors $d$ of $r$.  The first statement follows from the
facts that $X^r = Y^r$ and $X_2 = Y_2$.  To prove the second statement,
we note that if $X_1^d \sim Y_1^d$, we can multiply both sides of the
relation by the central element $X_2^d = Y_2^d$ to find that $X^d\sim Y^d$.

Replacing $X$ and $Y$ with $X_1$ and $Y_1$, we find that it suffices
to prove the lemma in the case where $X$ and $Y$ have $p$-power order.
Again replacing $Y$ with a conjugate, we may assume that there is a 
$p$-Sylow subgroup $S$ of $A$ that contains both $X$ and~$Y$. 
Replacing $A$ with $S$ and $G$ with $G\cap S$, 
we see that we may assume that $A$ is a $p$-group.

We prove the lemma for $p$-groups by
induction on~$\#A$.  The base case $\#A=1$ is trivial.

Since $r>1$ we know that $X$ and $Y$ are not conjugate to one another;
since they have the same image in $A/G$, the group $G$ must be nontrivial.
Every nontrivial normal subgroup of a $p$-group contains a nontrivial
central element, so there is an order-$p$ subgroup $Z$ of $G$ that 
is central in~$A$.  Let $A' = A/Z$ and $G'=G/Z$, let $x$ and $y$ be
the images of $X$ and $Y$ in $A'$, and let $s$ be the smallest divisor of $r$
such that $x^{s}\sim y^{s}$.  Replacing $Y$ by a conjugate,
we may assume that $x^{s}=y^{s}$. 
Then we have $X^{s} = c Y^{s}$ for some $c$ in $Z$, and it follows
that $s$ is either $r$ or $r/p$.  In either case, the induction
hypothesis shows that $r/p$ divides $\#G'$, so that $r$ divides~$\#G$.
\end{proof}

\begin{proof}[Proof of Lemma~\textup{\ref{L:conjugacy2}}]
We know from Lemma~\ref{L:conjugacy} that $\#G$ is even.
Suppose, to obtain a contradiction, that $\#G$ is not a multiple of $4$.
Then the odd-order elements of $G$ form an index-$2$ characteristic
subgroup $O$ of $G$; if $g$ is an element of $G$ of order $2$, 
then $G = O\cdot\langle g\rangle$.   The subgroup $O$ of $G$ is 
fixed by every automorphism of $G$, so $O$ is normal in $A$ as well.

Replacing $Y$ by a conjugate, we may assume that $X^2 = Y^2$.  Since $X$ and
$Y$ have the same image in $A/G$, we may write $Y = gX$ for some $g\in G$.  We
claim that in fact $g\in O$.  To see this, note that $A/O$ is an extension
of the order-$2$ group $G/O$ by the abelian group $A/G$, so that $A/O$ is 
abelian.  Since $X^r$ is conjugate to $Y^r$, the images of these elements
in $A/O$ are equal.  In particular, this shows that the image of $g$ in
$G/O\subset A/O$ has odd order, and so is trivial.

Now let $A' = O\cdot\langle X\rangle$ and $G' = O$.  The elements $X$ and $Y$ of $A'$
satisfy $X^2 = Y^2$ and $X\not\sim Y$, so by Lemma~\ref{L:conjugacy} we see
that $G'$ has even order, a contradiction.
\end{proof}

The following example shows that Theorem~\ref{T:cocycles} is sharp, in the sense
that for any $r$ and $s$ there are elements $x$ and $y$ of the dihedral group
$D_{2rs}$ and an automorphism $\alpha$ of $D_{2rs}$ that satisfy the hypotheses
of Theorem~\ref{T:cocycles}.

\begin{example}
Let $r>1$ and $s>1$ be two integers that are coprime to one another, and
let $G$ be the dihedral group of order $2rs$.  Choose generators $u$ and $v$
for $G$ such that $u^{rs} = v^2 = 1$ and $vuv=u^{-1}$.  Let $\alpha$
be the involution of $G$ that sends $u$ to $u^{-1}$ and $v$ to $uv$.
Let $m$ be an integer that is congruent to $0$ modulo $r$ and congruent
to $1$ modulo $s$, and take $x = uv$ and $y = u^m v$.  We claim that
$x$ and $y$ satisfy the hypotheses of Theorem~\ref{T:cocycles}.  

To see this, we first note that for any positive integer $d$ we have
\begin{align*}
x x^\alpha x^{\alpha^2}\cdots x^{\alpha^{d-1}} &= 
\begin{cases}
  u^{d/2} & \text{if $d$ is even;}\\
  u^{(d+1)/2} v & \text{if $d$ is odd;}
\end{cases}\\
y y^\alpha y^{\alpha^2}\cdots y^{\alpha^{d-1}} &= 
\begin{cases}
  u^{dm - d/2} & \text{if $d$ is even;}\\
  u^{dm - (d-1)/2} v & \text{if $d$ is odd.}
\end{cases}
\end{align*}
On the other hand, if $z = u^a$ for some integer $a$ then
\[
z^{\alpha^d} = \begin{cases}
u^a & \text{if $d$ is even;}\\
u^{-a} & \text{if $d$ is odd}
\end{cases}
\]
while if $z = u^a v$ then
\[
z^{\alpha^d} = \begin{cases}
u^a v& \text{if $d$ is even;}\\
u^{1-a} v & \text{if $d$ is odd.}
\end{cases}
\]
One can then check that then we have
\begin{equation}
\label{EQ:cocycle}
y y^\alpha y^{\alpha^2}\cdots y^{\alpha^{d-1}} =
z^{-1} x x^\alpha x^{\alpha^2}\cdots x^{\alpha^{d-1}} z^{\alpha^d}
\end{equation}
for some $z = u^a$ if and only if $u^{(m-1)d} = 1$, and 
that~\eqref{EQ:cocycle} holds for some $z = u^a v$ if
and only if $u^{md} = 1$.    The first condition holds if and only
if $r\mid d$, and the second condition holds if and only if $s\mid d$.
\end{example}

However, Theorem~\ref{T:cocycles} can be sharpened in the case where the 
automorphism $\alpha$ is the identity and $rs$ is even.
In this case, the cohomology set
$H^1(\hat{\Z},G,\alpha)$ is just the set of conjugacy classes of $G$.

\begin{theorem}
\label{T:2rs}
Let $r>1$ and $s>1$ be integers that are coprime to one another.
Suppose $G$ is a finite group that contains two elements
$x$ and $y$ such that $x^r \sim y^r$ and $x^s \sim y^s$, but such that
$x^d\not\sim y^d$ for all proper divisors $d$ of $r$ or of~$s$.
Then $\#G$ is divisible by $rs$ but is not equal to $rs$.
If $rs$ is even then $\#G$ is not equal to $2rs$, and if
$rs\equiv 2\bmod 4$ then $\#G$ is divisible by $2rs$.
\end{theorem}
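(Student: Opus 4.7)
The first, second, and fourth claims follow immediately from Theorem~\ref{T:cocycles} applied with $\alpha$ equal to the identity automorphism of $G$: in that setting $H^1(\hat{\Z},G,\alpha)$ is just the set of conjugacy classes of $G$, so the cohomological hypotheses of Theorem~\ref{T:cocycles} collapse to the conjugacy hypotheses given here, and the divisibility and non-equality conclusions transfer verbatim.

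The new content is the assertion that $\#G \neq 2rs$ when $rs$ is even.  The plan is to suppose $\#G = 2rs$, relabel so that $r$ is even and $s$ is odd, and derive a contradiction.  The starting move is to replay the final paragraph of the proof of Theorem~\ref{T:cocycles}: Horosevskii's theorem supplies a smallest positive $m$ with $x^m \in Z(G)$, and $m < \#G = 2rs$.  The argument used there shows that if $r$ or $s$ fails to divide $m$, then we obtain $x^g = y^g$ for some proper divisor $g$ of $r$ or of $s$, contradicting the hypothesis.  Thus $rs \mid m$ and so $m = rs$, whence the image of $x$ in $G/Z(G)$ has order exactly $rs$, forcing $\#Z(G) \in \{1,2\}$.

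The case $\#Z(G) = 2$ is quick: $G/Z(G)$ is cyclic, so $G$ is abelian, conjugacy reduces to equality, and $x^r = y^r$ together with $x^s = y^s$ and $\gcd(r,s)=1$ force $x = y$, contradicting $x \not\sim y$.

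The case $Z(G) = 1$ is the crux.  Then $\langle x \rangle$ is cyclic of order $rs$ and has index $2$ in $G$, hence is normal.  Because conjugation preserves cosets of $\langle x \rangle$ and $s$ is odd, $y$ cannot lie outside $\langle x \rangle$ (otherwise $y^s \notin \langle x \rangle$ while $x^s \in \langle x \rangle$, precluding $x^s \sim y^s$).  Write $y = x^\ell$, and pick $t \in G \setminus \langle x \rangle$ with $t x t^{-1} = x^k$; since $\gcd(k,rs) = 1$ and $2 \mid rs$, the integer $k$ is odd.  The conjugacy class of $x^i$ in $G$ is $\{x^i, x^{ik}\}$, so $x^s \sim y^s$ amounts to $\ell \equiv 1 \pmod r$ or $\ell \equiv k \pmod r$, either of which forces $\ell$ odd.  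The decisive step is then to apply the non-conjugacy hypothesis at the proper divisor $d = r/2$ of $r$: because $\ell$, $1$, and $k$ are all odd, the conditions $x^{r/2} \not\sim y^{r/2}$ simplify to $\ell \not\equiv 1 \pmod s$ and $\ell \not\equiv k \pmod s$, in direct conflict with $x^r \sim y^r$, which demands one of those congruences.  The main obstacle is recognizing that $r/2$ is the proper divisor to exploit and that everything hinges on the parity of $k$.
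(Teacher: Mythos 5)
Your proof is correct. You reach the same structural bottleneck as the paper --- namely that $\langle x\rangle$ is cyclic of order $rs$, normal of index $2$ in $G$, and contains $y$ --- and from there your conjugacy computation is the paper's argument with the roles of $r$ and $s$ interchanged: the paper relabels so that $r$ is odd and $s$ even, writes $y=x^c$ with $c$ odd, and derives the contradiction from the proper divisor $s/2$; you relabel the other way and use $r/2$. That part is equivalent up to symmetry.

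Where you genuinely diverge is in how you establish that $x$ has order $rs$. You replay the Horo\v sevski\u\i\ step from the end of the proof of Theorem~\ref{T:cocycles}: the inner automorphism given by $x$ has order $m<\#G=2rs$, the conjugacy manipulation (conjugate $y$ so $x^r=y^r$, use centrality of $x^m$, combine $mr$ and $ms$ to get $x^m=y^m$, then $x^{(m,r)}=y^{(m,r)}$) forces $r\mid m$ and $s\mid m$, hence $m=rs$; then $\#Z(G)\in\{1,2\}$, the case $\#Z(G)=2$ dies because $G/Z(G)$ cyclic would make $G$ abelian, and $Z(G)=1$ gives $x$ of order $rs$. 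The paper instead argues more directly on orders: it first notes that $x^r\sim y^r$, $x^s\sim y^s$, and $(r,s)=1$ force $x$ and $y$ to have a common order $n$, then shows $rs\mid n$ by splitting $x$ and $y$ into $p$-parts and $p'$-parts for each prime power dividing $rs$, and finally rules out $n=2rs$ since that would make $G=\langle x\rangle$ abelian. Your route is shorter once Theorem~\ref{T:cocycles} is in hand and exhibits the parallel nicely, at the cost of re-invoking Horo\v sevski\u\i\ and a small center case split; the paper's route is more elementary and self-contained, not relying on the automorphism-order bound at all. Both are valid proofs.
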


\begin{proof}
We know from Theorem~\ref{T:cocycles} that $\#G$ is divisible by $rs$
but is not equal to $rs$, and we know that if $rs\equiv 2\bmod 4$ 
then $\#G$ is divisible by~$2rs$.  All we have left to prove
is that if $rs$ is even, $\#G$ cannot be equal to $2rs$.
Suppose, to get a contradiction, that $rs$ is even and  $\#G = 2rs$.
By switching $r$ and $s$, if necessary,
we may assume that $r$ is odd and $s$ is even.

The orders of $x^r$ and $y^r$ are equal, as are the orders of $x^s$ and $y^s$.
Since $r$ and $s$ are coprime, the orders of $x$ and $y$ are equal.  Denote
by $n$ this common value.  We claim that $n$ is divisible by $rs$.
To see this, suppose there were a prime power $q=p^e$ with $q\mid rs$ but $q\nmid n$,
say with $q\mid r$.  Write $x = x_1 x_2$, where $x_1$ is a power of $x$
with $p$-power order and $x_2$ is a power of $x$ with prime-to-$p$ order, and
write $y=y_1y_2$ likewise.  Our assumption is that $x_1^{q/p}= y_1^{q/p}=1$.
Since $x^r\sim y^r$ we see that
$x_2^r\sim y_2^r$.  But since the orders of $x_2$ and $y_2$ are
coprime to $p$, there is an integer $m$ with $x_2^{rm} = x_2^{r/p}$
and $y_2^{rm} = y_2^{r/p}$, so that $x_2^{r/p}\sim y_2^{r/p}$.  But since
$x_1^{r/p} = y_1^{r/p} = 1$, we have $x^{r/p}\sim y^{r/p}$ as well,
contradicting the hypotheses of the theorem.  Thus, $n$ is divisible by $rs$.

In fact, $n$ must be equal to $rs$; for if $n$ were equal to $2rs$, 
then $G$ would be abelian, and no abelian group satisfies the hypotheses
of the theorem. 

Thus the subgroup $\langle x\rangle$ of $G$ has index $2$ and is normal.
Let $z$ be an element of $G$ that is not in $\langle x\rangle$.  Then
$z x z^{-1} = x^a$ for some $a$
and $z^2 = x^b$ for some $b$.  Note that 
\[
x = z^2 x z^{-2} = z x^a z^{-1} = x^{a^2},
\]
so that $a^2 \equiv 1 \bmod rs$.  In particular, $a$ is coprime to $rs$.

We know that $x^r$ and $y^r$ are conjugate in $G$, so their images in
$G/\langle x\rangle\cong C_2$ are conjugate, hence equal.  Since $r$ is
odd, the images of $x$ and $y$ in $G/\langle x\rangle$ must be equal,
so $y$ lies in $\langle x\rangle$, say $y = x^c$.  Since $x$ and $y$ 
have the same order, $c$ must be odd.

Note that conjugating $x^i$ by an element of $G$ gives either $x^i$
or $x^{ia}$, depending on whether the element we are conjugating by is a power of $x$.
Since $x^s\sim y^s$ we see that
\[
x^{cs} = x^{s} \text{\quad or\quad} x^{cs} = x^{as}.
\]
Thus, either $c\equiv 1\bmod r$ or $c\equiv a\bmod r$.
Suppose that $c\equiv 1\bmod r$.
Then we claim that $cs/2 \equiv s/2\bmod rs$.  To check that this congruence holds,
we need only note that it holds modulo $r$ (because $c\equiv 1\bmod r$)
and that it holds modulo $s$ (because $cs/2 \equiv s/2\bmod s$,
since $c$ is odd).  Thus, $y^{s/2}$ is conjugate to $x^{s/2}$, a contradiction.

Likewise, if $c\equiv a\bmod r$, we find
that $cs/2 \equiv as/2\bmod rs$, and again $y^{s/2}$ is conjugate to $x^{s/2}$.
\end{proof}

Any group satisfying the hypotheses of Theorem~\ref{T:2rs} 
for an $s$ that is congruent to $2$ modulo $4$ must have order at
least $4rs$, but it is not necessary for its order to be a multiple
of~$4rs$.  We now construct an example for $r=3$ and $s=2$ where
the group $G$ has order $2^2\cdot 3^3$.

\begin{example}
Let $V_1$ be a $2$-dimensional vector space over $\F_2$ and let $V_2$ be
a $2$-dimensional vector space over $\F_3$.  Let $\alpha_1\in\Aut V_1$ 
and $\alpha_2\in\Aut V_2$ be automorphisms of order $3$, and let $A$ be 
the subgroup of $\Aut V_1 \times \Aut V_2$ generated by 
$\alpha = (\alpha_1,\alpha_2)$.  We take $G = (V_1\times V_2)\rtimes A$,
so that $G$ has order~$2^2\cdot 3^3$.

Let $v_1$ be a nonzero element of $V_1$ and let $v_2$ be an element of
$V_2$ that is not fixed by $\alpha_2$.  Let $x$ and $y$ be the elements
$(v_1,v_2)$ and $(\alpha_1(v_1),v_2)$ of $V_1\times V_2$, viewed as 
elements of~$G$.  It is easy to see that $x$ and $y$ are not conjugate 
in~$G$.  But $x^2$ and $y^2$ are conjugate (because they are equal), and
$x^3$ is equal to the conjugate of $y^3$ by~$\alpha$.
\end{example}

\section{The unique genus-$1$ example}
\label{S:genus1}
In this section we prove Theorem~\ref{T:genus1}.  First we make some
general comments about twists of genus-$1$ curves. 

Suppose $E$ is an elliptic curve over a perfect field~$K$.  This
means that $E$ is a curve of genus $1$ together with a specified 
$K$-rational point $O$, the identity element for the group law on~$E$.
Note that there is a distinction between an automorphism of $E$ as a 
curve and an automorphism of $E$ as an elliptic curve; a curve 
automorphism $\varphi$ of $E$ is an elliptic curve automorphism if and
only if $\varphi(O) = O$.  We denote the group of elliptic curve 
automorphisms of $E$ by $\Aut (E,O)$.

Every $\Kb$-rational point $P$ of $E$ gives us an element of $\Aut E_\Kbar$,
namely the translation-by-$P$ map, so there is an injection 
$E(\Kb) \to \Aut E_\Kbar$.  It is not hard to show that in fact we have
\[
\Aut E_\Kbar = E(\Kb) \rtimes \Aut (E_\Kbar,O),
\]
so there is a split exact sequence
\[
0 \to E(\Kb) \to \Aut E_\Kbar \to \Aut (E_\Kbar,O) \to 0.
\]
Applying Galois cohomology to this sequence, we obtain a map of pointed
sets $\pi: H^1(\Gal \Kb/K, \Aut E_\Kbar) \to H^1(\Gal \Kb/K, \Aut (E_\Kbar,O)).$
This map takes the class of a genus-$1$ curve $F$ to the class of its
Jacobian.  This fact makes it clear that $\pi$ is surjective, because
given a pointed curve $(F,P)$ that is a twist of $(E,O)$, we have 
$\pi([F]) = [(F,P)]$.  In other words, every twist of the elliptic curve
$E$ comes from a twist of the genus-$1$ curve~$E$. When $K$ is finite,
the converse is true as well.

\begin{lemma}
\label{L:ECtwists}
If $K$ is a finite field then the map $\pi$ is a bijection.
\end{lemma}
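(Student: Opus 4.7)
The plan is to prove only injectivity, since the paragraph preceding the lemma already establishes that $\pi$ is surjective: given a pointed twist $(F,P)$ of $(E,O)$, one simply has $\pi([F])=[(F,P)]$. The essential input I will invoke is that every genus-$1$ curve $F$ over a finite field $K$ has a $K$-rational point. This is immediate from the Weil bound $\#F(K)\ge (\sqrt{\#K}-1)^2$, which, since the left-hand side is a nonnegative integer, forces $\#F(K)\ge 1$ for every finite field $K$.

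With this in hand, injectivity is almost automatic. Suppose $[F_1],[F_2]\in H^1(\Gal \Kb/K,\Aut E_\Kbar)$ satisfy $\pi[F_1]=\pi[F_2]$; unwinding the construction of $\pi$, this says that the Jacobians $\Jac F_1$ and $\Jac F_2$ are isomorphic as elliptic curves over $K$. Choose $K$-rational points $P_i\in F_i(K)$. The Abel--Jacobi map $Q\mapsto [Q-P_i]$ is a $K$-rational isomorphism of elliptic curves $(F_i,P_i)\cong(\Jac F_i,O)$, so composing with a chosen isomorphism $\Jac F_1\cong\Jac F_2$ yields a $K$-isomorphism of the underlying curves $F_1\cong F_2$, which says $[F_1]=[F_2]$.

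An alternative route, entirely cohomological, is worth noting in case it fits the paper's style better. A direct calculation shows that for $\varphi\in\Aut E_\Kbar$ with image $\psi\in\Aut(E_\Kbar,O)$ and for $P\in E(\Kb)$, conjugation satisfies $\varphi t_P\varphi^{-1}=t_{\psi(P)}$; hence the conjugation action of $\Aut E_\Kbar$ on its normal subgroup $E(\Kb)$ factors through $\Aut(E_\Kbar,O)$. The twisting formalism for nonabelian Galois cohomology then identifies the fiber of $\pi$ over a class $[c]$ corresponding to the elliptic curve $F/K$ with the image of $H^1(\Gal \Kb/K,F(\Kb))$ in $H^1(\Gal \Kb/K,\Aut E_\Kbar)$. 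By Lang's theorem, $H^1(\Gal \Kb/K,F(\Kb))=0$, so every fiber of $\pi$ is a singleton.

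The only real obstacle is bookkeeping: in the first approach, checking that $\pi$ really is the ``Jacobian'' map (as asserted in the discussion preceding the lemma) and that the Abel--Jacobi identification is $K$-rational; in the second, tracking the twist of $E(\Kb)$ by a cocycle lifting $[c]$ and recognizing it as $F(\Kb)$ with its natural Galois action. Both points are routine, so I would probably present the first, more geometric proof, and perhaps remark on the cohomological one.
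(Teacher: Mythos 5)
Your first argument is essentially identical to the paper's proof: both rest on the fact (via the Weil/Hasse bound, or Deuring) that a genus-$1$ curve over a finite field has a rational point, hence is isomorphic over $K$ to its Jacobian, so equal Jacobians force $F_1\cong F_2$. Your alternative, purely cohomological route via Lang's theorem ($H^1(\Gal\Kb/K,F(\Kb))=0$, so each fiber of $\pi$ is a singleton) is a genuinely different and slightly more conceptual argument that the paper does not give; it would fit naturally here since Lang's theorem is the standard mechanism behind the point-existence fact anyway, but for the purposes of this lemma both proofs are correct and the geometric one is what the paper uses.
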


\begin{proof}
Suppose that $F_1$ and $F_2$ are genus-$1$ curves that correspond to classes
in $H^1(\Gal \Kb/K, \Aut E_\Kbar)$ having the same image in  
$H^1(\Gal \Kb/K, \Aut (E_\Kbar,O))$; that is, suppose that 
$\Jac F_1 \cong \Jac F_2$.  Deuring proved that every genus-$1$ curve
over a finite field has a rational point; in fact, the Weil bounds show 
that a genus-$1$ curve over $\F_q$ must have at least 
$q + 1 - 2\sqrt{q} \ge 1$ points.  But a genus-$1$ curve with a rational
point is isomorphic to its own Jacobian, so we must have $F_1\cong F_2$.
Thus, $\pi$ is injective.
\end{proof}

Lemma~\ref{L:ECtwists} shows that over a finite field, the twists of a
genus-$1$ curve $E$ coincide with the twists of $E$ viewed as an 
elliptic curve, so we may replace the infinite group $\Aut E_\Kbar$ with the
finite group $\Aut (E_\Kbar,O)$.

\begin{proof}[Proof of Theorem~\textup{\ref{T:genus1}}]
Suppose $E$ and $F$ are distinct elliptic curves over $K = \F_q$ that 
are twists of one another and that have minimal isomorphism extensions
of degrees $r$ and $s$.  Then Theorem~\ref{T:cocycles} shows that
the geometric automorphism group of $E$ has order divisible by $rs$
and greater than~$rs$.
Silverman~\cite{silverman}*{Prop.~A.1.2} lists the possible orders of 
automorphism groups of elliptic curves.   Using Silverman's list, we see
that either $r=2$ and $s=3$ and the elliptic curves $E$ and $F$ are twists
of the $j$-invariant $0$ curve in characteristic $2$ or in 
characteristic~$3$, or $r=3$ and $s=4$ and $E$ and $F$ are twists of the
$j$-invariant $0$ curve in characteristic~$2$. First let us examine the
cases where $r=2$ and $s=3$, starting in characteristic~$2$.

Suppose $K$ is an odd-degree extension of $\F_2$, so that $q = 2^d$ for 
some odd~$d$.  Up to isomorphism over $K$, there are exactly three 
elliptic curves over $K$ with $j$-invariant $0$; they are
\begin{align*}
E_1\colon\quad y^2 + y & = x^3 \\
E_2\colon\quad y^2 + y & = x^3 + x \\
E_3\colon\quad y^2 + y & = x^3 + x + 1.
\end{align*}
It is easy to check that these curves are pairwise nonisomorphic 
over~$K$ --- in fact, one can check that their Frobenius endomorphisms
are (respectively) $\pi_1 = (\sqrt{-2})^d$ and 
$\pi_2 = (-1+\sqrt{-1})^d$ and $\pi_3 = (1+\sqrt{-1})^d$, so they are 
not even isogenous to one another over $K$.  One can check that these 
are all of the twists of the $j=0$ curve by verifying that
$\sum_{i=1}^3 1/\#\Aut E_i = 1$ 
(see Lemma~\ref{L:weight}).  But these three
curves remain distinct over the cubic extension of~$K$; therefore, there
are no curves $C$ and $D$ over $K$ as in the statement of 
Theorem~\ref{T:genus1} when $r=2$ and $s=3$.

Suppose $K$ is an even-degree extension of~$\F_2$.  Then $C$ and $D$ are
both twists of the elliptic curve $E_1$ defined above.  All of the
geometric automorphisms of $E_1$ are defined over $K$, and 
by~\cite{silverman}*{Exer.~A.1(b)} we see that the 
automorphism group $G$ of $E_1$ is isomorphic to $\SL_2(\F_3)$.
Then by Lemma~\ref{L:cohomology}
we see that $C$ and $D$ correspond to two nonconjugate elements of $G$
whose squares and cubes are conjugate.  But an easy calculation shows 
that there are no such elements in $G$.

Suppose $K$ is an even-degree extension of~$\F_3$.  Our argument in this
case is the same as in the preceding paragraph: The curves $C$ and $D$ 
are both twists of the elliptic curve $E$ defined by $y^2 = x^3 - x$.
All of the geometric automorphisms of $E$ are defined over~$K$, and the
automorphism group $G$ of $E$ is isomorphic to $C_3\rtimes C_4$, where
$C_4$ acts on $C_3$ in the unique nontrivial way 
(see~\cite{silverman}*{Exer.~A.1(a)}).  Then by Lemma~\ref{L:cohomology}
we see that $C$ and $D$ correspond to two nonconjugate elements of $G$ 
whose squares and cubes are conjugate.  But again, an easy calculation 
shows that there are no such elements in $G$.

Thus we find that when $r=2$ and $s=3$, the field $K$ must be an 
odd-degree extension of~$\F_3$.  Now we show that for every such field 
we can find elliptic curves $C$ and $D$ as in the theorem.  

Let $a$ be an element of $K$ with $\Tr_{K/\F_3} a \neq 0$, and let $C$
and $D$ be the two curves
\begin{align*}
C\colon\quad y^2 & = x^3 - x - a\\
D\colon\quad v^2 & = u^3 - u + a.
\end{align*}
It is easy to check that these curves are not isomorphic to one another
over~$K$.  But if $i$ is an element of the quadratic extension of $K$ 
with $i^2 = -1$, then $u = -x, v = iy$ gives an isomorphism from $C$ 
to~$D$; and if $\alpha$ is an element of the cubic extension of $K$ with
$\alpha^3 - \alpha = a$, then $u = x + \alpha, v = y$ gives an 
isomorphism from $C$ to $D$.

Now let us consider the case $r=3$ and $s=4$.  Suppose there were
two genus-$1$ curves $C$ and $D$ over a finite field $K$
satisfying the conclusion of Theorem~\ref{T:genus1} with $r=3$ and $s=4$.
As we have seen, $K$ must have characteristic~$2$.
Let $L$ be the quadratic extension of $K$.  Then the curves $C_L$ and $D_L$
satisfy the conclusion of 
Theorem~\ref{T:genus1} with $r=2$ and $s=3$.
But we have just seen that no such examples exist in characteristic~$2$.
\end{proof}

\section{Proof of Theorem~\ref{T:automorphism}}
\label{S:automorphism}
In this section we prove a strong version of 
Theorem~\ref{T:automorphism}. 

\begin{theorem}
\label{T:automorphism-plus-galois}
Let $K_0$ be a finite prime field\/ $\F_p$, let $r>1$ and $s>1$ be 
integers that are coprime to one another, and let $C$ and $D$ be
curves over a finite extension $K$ of $K_0$ that satisfy the 
conclusion of Theorem~\textup{\ref{T:main}}.
Then the genus $g$ of $C$ and $D$ is larger than $0$. 
If $g=1$, then $\{r,s\} = \{2,3\}$ and the geometric automorphism 
group of $C$ \textup{(}and $D$\textup{)} viewed as an elliptic curve is 
$C_3\rtimes C_4$\textup{;}  furthermore,
not all of these automorphism are $K$-rational.
If $g>1$, then the 
geometric automorphism group $G$ of $C$ \textup{(}and~$D$\textup{)} has order 
divisible by~$rs$, but not equal to $rs$\textup{;} 
if $rs\equiv 2\bmod 4$ then $\#G$ is divisible by $2rs$\textup{;}
and if $\#G=2rs$ then no twist of $C$ has all of its geometric 
automorphisms defined over~$K$.
\end{theorem}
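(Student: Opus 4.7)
The plan is to dispatch each claim in the theorem in turn, using group-theoretic consequences of Theorems \ref{T:cocycles} and \ref{T:2rs} to control $\#G$.

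First I would handle the genus-$0$ and genus-$1$ claims. The genus-$0$ case is immediate from Theorem~\ref{T:genus0}: since $r$ and $s$ are coprime, at least one of them is odd, and then that theorem forces $C \cong D$ over $K$, contradicting the fact that the corresponding minimal isomorphism extension has degree strictly greater than $1$. For the genus-$1$ case, Theorem~\ref{T:genus1} gives $\{r,s\} = \{2,3\}$, tells us $K$ is an odd-degree extension of $\F_3$, and says the Jacobians are supersingular. The only supersingular $j$-invariant in characteristic $3$ is $j=0$, and a direct computation on $y^2 = x^3 - x$ shows that its elliptic-curve automorphism group is generated by the order-$3$ map $\rho\col(x,y)\mapsto(x+1,y)$ and the order-$4$ map $\sigma\col(x,y)\mapsto(-x,iy)$ with $i^2 = -1$; these satisfy $\sigma\rho\sigma^{-1}=\rho^{-1}$, so together they generate a copy of $C_3 \rtimes C_4$. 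Since $-1$ is a nonsquare in $\F_3$ and $K$ has odd degree over $\F_3$, the element $i$ lies in $\F_9 \setminus K$, so the order-$4$ automorphism is not $K$-rational.

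For the case $g>1$ the automorphism group $G = \Aut C_\Kb$ is finite, and the main bounds come from Theorem~\ref{T:cocycles}. Let $\alpha$ be the image of Frobenius in $\Aut G$, let $x=1$ be the trivial cocycle (representing $C$ as a twist of itself), and let $y \in G$ be a cocycle representing $D$ as a twist of $C$. The description of the restriction map in Section~\ref{S:twists} shows that $C_L \cong D_L$ for the degree-$m$ extension $L$ of $K$ exactly when $[y y^\alpha \cdots y^{\alpha^{m-1}}]_{\alpha^m} = [1]_{\alpha^m}$ in $H^1(\hat{\Z},G,\alpha^m)$. The minimality of $r$ and $s$ therefore translates term for term into the hypothesis of Theorem~\ref{T:cocycles}, whose conclusion gives the required divisibility of $\#G$ by $rs$, the inequality $\#G \neq rs$, and divisibility by $2rs$ when $rs \equiv 2\bmod 4$.

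For the final assertion I argue by contradiction: suppose $\#G = 2rs$ and some twist $C'$ of $C$ has all its geometric automorphisms defined over $K$, so that the Galois action on $\Aut C'_\Kb \cong G$ is trivial. Then by Lemma~\ref{L:cohomology}, the twists $C$ and $D$ of $C'$ correspond to conjugacy classes $[c],[d]$ in $G$, and the minimal-extension hypothesis translates to $c^r \sim d^r$, $c^s \sim d^s$, and $c^e \not\sim d^e$ for all proper divisors $e$ of $r$ or of $s$, which is precisely the hypothesis of Theorem~\ref{T:2rs}. Since $\#G = 2rs$ with $rs$ even falls under the exclusion in Theorem~\ref{T:2rs}, we get the desired contradiction. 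I expect the most delicate part to be the careful translation of the curve-theoretic minimality hypothesis into the cocycle conditions of Theorems~\ref{T:cocycles} and~\ref{T:2rs}; once that dictionary is in place, the genus-$1$ identification of $C_3\rtimes C_4$ together with the description of the Frobenius action on $i$ is essentially bookkeeping.
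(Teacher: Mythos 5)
Your proposal follows the paper's own proof very closely: the genus-$0$ case is dispatched by Theorem~\ref{T:genus0}, the genus-$1$ case by Theorem~\ref{T:genus1} and the structure of $\Aut$ of the supersingular $j=0$ curve, the divisibility claims for $g>1$ by translating the minimality hypothesis into the cocycle hypothesis of Theorem~\ref{T:cocycles}, and the final claim by arguing via Lemma~\ref{L:cohomology} and Theorem~\ref{T:2rs}. That is the paper's route.

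Two points are worth flagging, though. First, in the genus-$1$ step you compute with $y^2 = x^3 - x$, but the theorem asserts non-$K$-rationality for the actual curves $C$ and $D$, which are nontrivial twists of $y^2 = x^3 - x$ and need not be $K$-isomorphic to it; the conclusion still holds (e.g.\ because an order-$4$ automorphism of any elliptic curve acts on the tangent space at the origin by a primitive fourth root of unity, which lies outside $K$ when $-1$ is a nonsquare; or because if all of $\Aut(C_\Kb,O)$ were $K$-rational, Lemma~\ref{L:cohomology} plus Lemma~\ref{L:ECtwists} would force $D$ to correspond to a $g\in C_3\rtimes C_4$ with $g^2=g^3=1$, i.e.\ $g=1$), but the transfer from the model curve to $C$ should be made explicit. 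Second, and more substantively: your sentence ``Since $\#G = 2rs$ with $rs$ even falls under the exclusion in Theorem~\ref{T:2rs}'' correctly reports what Theorem~\ref{T:2rs} actually says, but $rs$ even is not a hypothesis here. When $rs$ is odd, Theorem~\ref{T:2rs} does \emph{not} exclude $\#G=2rs$ (and indeed cannot --- take $x=a$, $y=a^4$ in $D_{30}$ with $r=3$, $s=5$), so your contradiction argument, exactly like the paper's own appeal to Theorem~\ref{T:2rs} at this point, only covers $rs$ even. You have essentially exposed a parity restriction that the published proof passes over silently; it would be worth examining whether the theorem's final clause should carry the hypothesis ``$rs$ even,'' or whether some additional curve-theoretic input is needed to handle odd $rs$.
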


\begin{proof}
It follows from Theorem~\ref{T:genus0} that $g>0$, and 
the statements about the genus-$1$ case follow from 
Theorem~\ref{T:genus1} and its proof.

Suppose $g > 1$.  Let $\alpha\in\Aut G$ be the automorphism of
$G$ that describes the action of the $q$-power Frobenius on $G$,
where $q=\#K$.  As we saw in Section~\ref{S:twists}, the twist $D$
of $C$ corresponds to an element of 
$H^1(\Gal \Kb/K, G) = H^1(\hat{\Z},G,\alpha)$.
Suppose this element is represented by a cocycle that sends the
Frobenius to $y\in G$.  Then, in the notation introduced at the 
beginning of Section~\ref{S:grouptheory}, we have 
$[1]_{\alpha^r} = [y y^\alpha \cdots y^{\alpha^{r-1}}]_{\alpha^r}$
and
$[1]_{\alpha^s} = [y y^\alpha \cdots y^{\alpha^{s-1}}]_{\alpha^s}$
but for every proper divisor $d$ of $r$ or of $s$ we have
$[1]_{\alpha^d} \neq [y y^\alpha \cdots y^{\alpha^{d-1}}]_{\alpha^d}$.
Then Theorem~\ref{T:cocycles} tells us that $\#G$ is divisible by~$rs$
but not equal to $rs$, and that if $rs\equiv 2\bmod 4$ then $\#G$ is 
divisible by $2rs$.

Suppose $\#G = 2rs$, and let $X$ be an arbitrary twist of $C$.  Then 
$G \cong \Aut X_\Kbar$, and $C$ and $D$ are both twists of $X$, 
say corresponding to cocycles that send the Frobenius to elements $x$ 
and $y$ of $\Aut X_\Kbar$, respectively.  If  $\Gal \Kb/K$ 
acted trivially on $\Aut X_\Kbar$, then $x$ and $y$ would be 
nonconjugate elements of $G$ satisfying $x^r\sim y^r$ and 
$x^s\sim y^s$ and with $x^d\not\sim y^d$ for all proper divisors
$d$ of $r$ or of~$s$.  But Theorem~\ref{T:2rs} shows that this is 
impossible.  It follows that the action of  $\Gal \Kb/K$ on 
$\Aut X_\Kbar$ is nontrivial; that is, not all of the automorphisms 
of $X$ are defined over $K$.
\end{proof}

\section{Genus-$2$ examples}
\label{S:genus2}
In this section we prove Theorems~\ref{T:genus2} and~\ref{T:genus2bigger}.
In fact, we prove stronger statements that give information about the 
\emph{reduced automorphism groups} of the examples that occur for a 
given field.

First we review some facts about genus-$2$ curves. Let $C$ be a genus-$2$ curve 
over a field~$K$.  Then $C$ is hyperelliptic, so there is a unique degree-$2$
map from $C$ to $\PP^1$.  Let $\iota$ be the involution on $C$ 
determined by this double cover, and let $G$ be the automorphism group
of~$C_\Kbar$.  The subgroup $\langle\iota\rangle$ of $G$ is normal, and the
quotient group $\Gb$ is called the \emph{reduced automorphism group} 
of~$C_\Kbar$.  The group $\Gb$ acts faithfully on $\PP^1_\Kbar$ via the cover 
$C\to\PP^1$, so $\Gb$ can be viewed as a subgroup of 
$\Aut \PP^1_\Kbar = \PGL_2 \Kb$.

Let $X$ be the set of points of $\PP^1_\Kbar$ that ramify in the double
cover $C\to \PP^1$.  Then  $\Gb$ stabilizes the set $X$, and if the
characteristic of $K$ is not $2$ then every element of $\PGL_2 \Kb$ that
stabilizes $C$ is an element of  $\Gb$.

Igusa~\cite{igusa}*{\S8} enumerated the possible reduced 
automorphism groups of genus-$2$ curves over the algebraic closures of 
finite fields.  Here we determine which of these groups can occur for 
the curves mentioned in Theorem~\ref{T:genus2}.  First we dispose of the
finite fields of characteristic~$3$.

\begin{theorem}
\label{T:g2p3}
If\/ $\F_q$ is a finite field of characteristic~$3$, then there do not 
exist nonisomorphic genus-$2$ curves $C$ and $D$ over\/ $\F_q$ that
become isomorphic to one another over\/ $\F_{q^2}$ and\/ $\F_{q^3}$.
\end{theorem}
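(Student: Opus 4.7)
The plan is to combine the strong form of the automorphism-group lower bound with Igusa's classification. Since $\{r,s\}=\{2,3\}$ and $rs=6\equiv 2\bmod 4$, Theorem~\ref{T:automorphism-plus-galois} forces the geometric automorphism group $G=\Aut C_{\Kbar}$ of any such pair $(C,D)$ to satisfy $12\mid \#G$; moreover, if $\#G=12$ then not every twist of $C$ has all of its geometric automorphisms defined over $K$. Writing $\iota$ for the hyperelliptic involution and $\bar G=G/\langle\iota\rangle$ for the reduced automorphism group, we equivalently need $6\mid \#\bar G$, with $\bar G\le\PGL_2(\Fbar_3)$ acting on the $6$-point ramification locus $X\subset\PP^1_{\Fbar_3}$ of the hyperelliptic cover.

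From Igusa's enumeration of the reduced automorphism groups of genus-$2$ curves over $\Fbar_3$, I would extract the short list of admissible $G$ with $12\mid\#G$. The possibilities are rigid in characteristic $3$ because an order-$3$ element of $\PGL_2(\Fbar_3)$ is unipotent and fixes a unique point of $\PP^1$; hence if $3\mid\#\bar G$ then either $\bar G$ is contained in a Borel subgroup, or $\bar G$ is one of the exceptional subgroups of $\PGL_2(\Fbar_3)$ (such as $\PSL_2(\F_3)\cong A_4$ or $\PGL_2(\F_3)\cong S_4$) acting on a distinguished orbit of length $6$ in $\PP^1_{\Fbar_3}$.

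For each candidate $G$, I would translate the problem via Lemma~\ref{L:semidirect} into a group-theoretic check in the semidirect product $A=G\rtimes\langle\alpha\rangle$, where $\alpha\in\Aut G$ records the Frobenius action: the existence of $C$ and $D$ would yield elements $X,Y\in A$, both mapping to $\alpha$ modulo $G$, with $X^2\sim_A Y^2$ and $X^3\sim_A Y^3$ but $X\not\sim_A Y$. The case $\#G=12$ is disposed of by Theorem~\ref{T:cocycles} together with the final clause of Theorem~\ref{T:automorphism-plus-galois}, and each of the remaining candidates is handled by directly enumerating the conjugacy classes of $A$.

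The main obstacle is the exceptional cases, in which $\bar G$ is as large as $S_4$ and the curves are supersingular (related to Deuring-type normal forms like $y^2=x^5-x$). There, $G$ has many conjugacy classes and several outer automorphisms that could serve as $\alpha$. The key simplification is that $\alpha$ must fix $\iota$ and act on the short exact sequence $1\to\langle\iota\rangle\to G\to\bar G\to 1$, and that the underlying curve is defined over a small base field ($\F_3$ or $\F_9$), which cuts the possibilities for $\alpha$ down to a handful of outer automorphisms. A direct inspection of the resulting finite list of semidirect products $A$ will complete the proof.
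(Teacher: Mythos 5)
Your outline correctly reduces the problem to the two candidate reduced automorphism groups $D_6$ and $S_4$ via Igusa's list and Theorem~\ref{T:automorphism-plus-galois}, and the $S_4$ case is indeed dispatched by the argument from part~(a) of Theorem~\ref{T:precise2} (the relevant point being that $-2=1$ is always a square in characteristic~$3$, so the Galois action on the automorphism group of the $S_4$-curve is trivial). But the plan for the $D_6$ case has a real gap. You write that ``the case $\#G=12$ is disposed of by Theorem~\ref{T:cocycles} together with the final clause of Theorem~\ref{T:automorphism-plus-galois},'' but that final clause is only a \emph{conditional} statement: \emph{if} $\#G=2rs=12$, \emph{then} no twist of $C$ has all of its geometric automorphisms defined over $K$. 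It does not by itself forbid $\#G=12$. In fact a pure enumeration in the semidirect products $A=G\rtimes\langle\alpha\rangle$ cannot close this case: the example immediately following Theorem~\ref{T:cocycles} exhibits, for every coprime pair $(r,s)$, an automorphism $\alpha$ of the dihedral group $D_{2rs}$ and elements $x,y$ satisfying precisely the hypotheses of that theorem with $\#G=2rs$. For $\{r,s\}=\{2,3\}$ this gives an $\alpha\in\Aut(D_{12})$ fixing the central involution and a pair of cocycles that pass all the group-theoretic tests. So the group theory is genuinely consistent with $\#G=12$, and the contradiction you want does not exist at that level.

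What is missing is a geometric input: one must show that the offending $\alpha$ never actually arises as the Frobenius action on the automorphism group of a genus-$2$ curve with reduced automorphism group $D_6$ in characteristic $3$. The paper does this by producing, for every admissible value of the Igusa invariant $[1{\col}0{\col}t{\col}t{\col}t]$ with $t\neq 0,-1$, an explicit model $y^2=(x^3-x)^2-t^{1/3}$ over $\F_q$ whose full group of $12$ automorphisms $(x,y)\mapsto(\pm x+a,\pm y)$, $a\in\F_3$, is already defined over $\F_q$. Hence \emph{some} twist of $C$ has all geometric automorphisms defined over $K$, which contradicts the final clause of Theorem~\ref{T:automorphism-plus-galois} and rules out the $D_6$ case. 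Without this explicit rational model (which exploits the fact that the affine group $\F_3\rtimes\{\pm 1\}$ acts on $\PP^1$ with the $D_6$-ramification configuration in characteristic~$3$), the argument cannot be completed by enumerating conjugacy classes in $A$. Your instinct to restrict $\alpha$ to the automorphisms compatible with the extension $1\to\langle\iota\rangle\to G\to\bar G\to1$ is a sound simplification, but it leaves the problematic $\alpha$ in play; only the geometric construction eliminates it.
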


Next we classify the reduced automorphism groups that occur in 
characteristic not~$3$.

\begin{theorem}
\label{T:precise2}
Let\/ $\F_q$ be a finite field of characteristic not~$3$.  Suppose $C$ 
and $D$ are nonisomorphic genus-$2$ curves over a finite field\/ $\F_q$
that become isomorphic to one another over\/ $\F_{q^2}$ and\/ 
$\F_{q^3}$.  Let $\Gb$ be the reduced automorphism group 
of~$C_{\Fbar_q}$. 
\begin{itemize}
\item[\textup{(a)}]
If\/ $\characteristic \F_q > 5$ then the possibilities for $\Gb$ are as
follows\/\textup{:}
\begin{center}
\renewcommand{\arraystretch}{1.1}
\begin{tabular}{|c|c||c|c|c|}
\hline
\multicolumn{2}{|c||}{Given these conditions$\ldots$} &
\multicolumn{3}{|c|}{$\ldots$is this group possible\/\textup{?}}\\ \hline
$-2\in\F_q^{*2}$ \textup{?} & $-3\in\F_q^{*2}$ \textup{?} & 
$D_6$ & $D_{12}$ & $S_4$ \\
\hline\hline
\yes & \yes & \yes & \yes & \no  \\
\hline
\yes & \no  & \yes & \no  & \no  \\
\hline
\no  & \yes & \yes & \yes & \yes \\
\hline
\no  & \no  & \yes & \no  & \yes \\
\hline
\end{tabular}
\end{center}
\item[\textup{(b)}]
If\/ $\characteristic \F_q = 5$ then the possibilities for $\Gb$ are as
follows\/\textup{:}
\begin{center}
\renewcommand{\arraystretch}{1.1}
\begin{tabular}{|c||c|c|}
\hline
Given this condition$\ldots$ & 
\multicolumn{2}{|c|}{$\ldots$is this group possible\/\textup{?}}\\ \hline
$-2\in\F_q^{*2}$ \textup{?} & $D_6$ & $S_5$ \\
\hline\hline
\yes & \yes & \yes \\
\hline
\no  & \yes & \no  \\
\hline
\end{tabular}
\end{center}
\item[\textup{(c)}]
If\/ $\characteristic \F_q = 2$ then $\Gb \cong D_6$.
\end{itemize}
Furthermore, all of the groups listed as possibilities for a given 
field\/ $\F_q$ actually do occur.  In particular, over every finite
field of characteristic not~$3$, there are examples where $C$ and $D$
have geometric reduced automorphism group isomorphic to~$D_6$.
\end{theorem}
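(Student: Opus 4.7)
The proof strategy combines Igusa's classification of reduced automorphism groups of genus-$2$ curves with the cocycle machinery of Sections~\ref{S:twists} and~\ref{S:grouptheory}. Let $G = \Aut C_{\Fbar_q}$, so that $G$ is a central extension of $\Gb$ by the hyperelliptic involution $\iota$ and $|G| = 2|\Gb|$. Applying Theorem~\ref{T:automorphism-plus-galois} with $r = 2$ and $s = 3$ (note $rs \equiv 2 \bmod 4$) forces $12 \mid |G|$, so $|\Gb| \geq 6$ and $\Gb$ contains an element of order $3$. Igusa's table in characteristic not $2$ or $3$ then leaves only $\Gb \in \{D_6, D_{12}, S_4\}$ in characteristic greater than $5$; the same three candidates plus $S_5 \cong \PGL_2(\F_5)$ (arising from the Bolza curve $y^2 = x^5 - x$) in characteristic $5$; and only $\Gb = D_6$ in characteristic $2$. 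This produces the list of candidates appearing in each of the three tables.

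For each candidate $\Gb$, I would next make explicit the central extension $G$ and the Frobenius automorphism $\alpha \in \Aut G$, both of which are controlled by the rationality of the $\PGL_2$-action on the set of Weierstrass points. For $\Gb = D_{12}$ a primitive sixth root of unity enters the action, and this root lies in $\F_q$ precisely when $-3 \in \F_q^{*2}$; for $\Gb = S_4$ the order-$4$ element of the octahedral group requires a primitive fourth root of unity, and the corresponding rationality translates into $-2 \in \F_q^{*2}$. With $(G,\alpha)$ in hand, Lemmas~\ref{L:cohomology} and~\ref{L:semidirect} reduce the existence of a pair of nonisomorphic curves $C$, $D$ with the prescribed minimal isomorphism extensions to the existence of elements $(x,\alpha),(y,\alpha) \in G \rtimes \langle\alpha\rangle$ whose squares and cubes are conjugate but which are not themselves conjugate. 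This is a finite check inside each of a handful of small groups.

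For the ``yes'' entries I would produce explicit families: one-parameter families such as $y^2 = x^6 + ax^3 + 1$ for $\Gb = D_{12}$ (valid when $-3 \in \F_q^{*2}$), the Bolza-type twists $y^2 = x^5 - x + a$ of Theorem~\ref{T:genus2bigger} for the characteristic-$5$ $S_5$ case, and families such as $y^2 = x(x^4 + ax^2 + 1)$ for $\Gb = S_4$, in each case obtaining the partner curve $D$ by an appropriate quadratic twist of the parameter. The ``no'' entries are handled by combining Theorem~\ref{T:2rs} (which kills the case where Galois acts trivially on $G$) with direct conjugacy computations in the semidirect products $G \rtimes \langle \alpha\rangle$. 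The main obstacle will be the $\Gb = S_4$ analysis: the order-$48$ group $G$ admits several relevant central extension structures and several Frobenius actions, so the bookkeeping needed to show that nonconjugate pairs $(x, y)$ with the requisite cohomological behaviour exist precisely when the table says ``yes'' is the most delicate part of the case analysis.
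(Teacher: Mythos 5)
Your high-level strategy matches the paper's: invoke Theorem~\ref{T:automorphism-plus-galois} to force $12\mid\#G$, read Igusa's classification to cut the candidate $\Gb$ down to a short list, translate the existence of the pair $(C,D)$ into a conjugacy question in $G\rtimes\langle\alpha\rangle$ via Lemmas~\ref{L:cohomology} and~\ref{L:semidirect}, and finish with explicit examples.  However, several of your specifics do not survive scrutiny.  First, the explicit families you propose do not have the claimed reduced automorphism groups for generic parameter: $y^2=x^6+ax^3+1$ loses the symmetry $x\mapsto -x$ unless $a=0$, so its generic $\Gb$ is the order-$6$ group $D_6$, not $D_{12}$; likewise $y^2=x(x^4+ax^2+1)$ has $\Gb=V_4$ generically, not $S_4$.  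Since Igusa shows there is a \emph{unique} geometric curve with $\Gb=D_{12}$ (namely $y^2=x^6+1$) and a \emph{unique} one with $\Gb=S_4$, no one-parameter family can exist; the paper instead takes a single base curve and produces $C$, $D$ as twists by varying the cocycle (e.g.\ $y^2=x^6+g$ and $gy^2=x^6+g$).  Second, your heuristic that the $S_4$ condition comes from a primitive fourth root of unity is wrong: the fourth root of unity $i$ controls $-1\in\F_q^{*2}$, whereas the actual rationality of $\Aut X_{\Fbar_q}$ for the octahedral curve is governed by $\sqrt{-2}$, which appears explicitly in the lift of the order-$3$ and order-$4$ reduced automorphisms; you get the right answer for the wrong reason.

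Third, your plan to dispatch the ``no'' entries via Theorem~\ref{T:2rs} ``killing the case where Galois acts trivially'' overstates what that theorem gives you: with $r=2$, $s=3$ it only forbids $\#G\in\{6,12\}$ when $\alpha$ is trivial, so it eliminates $\Gb=D_6$ with trivial Galois action but says nothing about $\Gb=D_{12}$ ($\#G=24$) or $\Gb=S_4$ ($\#G=48$).  The exclusion of $S_4$ when $-2\in\F_q^{*2}$ (trivial Galois action, $\#G=48$) and of $D_{12}$ when $-3\notin\F_q^{*2}$ (nontrivial Galois action) each require a genuine conjugacy computation, which the paper carries out directly.  Finally, your concern about ``several relevant central extension structures'' for $\Gb=S_4$ is unfounded: there is a single geometric curve with $\Gb=S_4$, hence a single $G$, and only two possible Frobenius actions (trivial or not), determined by whether $-2$ is a square; the bookkeeping is a finite check in one fixed group, exactly as in the $D_{12}$ case.
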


First we will prove Theorem~\ref{T:precise2}, then Theorem~\ref{T:g2p3},
and finally Theorem~\ref{T:genus2bigger}.

\begin{remark}
On three occasions in the following proof of Theorem~\ref{T:precise2}
we will have to show that if $x$ and $y$ are elements of a certain
group $G$ and if $x^2\sim y^2$ and $x^3\sim y^3$, then $x\sim y$.  
On the first occasion we will give the details of the computation.
On the second occasion we will leave the details to the reader.
On the third occasion we will translate the group-theoretic
problem into a question about explicit curves over a small finite
field, and then answer this question by direct computation.
\end{remark}

\begin{proof}[Proof of Theorem~\textup{\ref{T:precise2}}]
Suppose that the characteristic of $\F_q$ is greater than~$5$.  In this 
case, Igusa~\cite{igusa}*{\S8} calculates that there are $7$ 
possible reduced isomorphism groups for a genus-$2$ curve, each of which
actually occurs over the algebraic closure:  the trivial group, the
cyclic group $C_2$, the dihedral group $D_6$, the Klein $4$-group 
$V_4 = D_4$, the dihedral group $D_{12}$, the symmetric group $S_4$, and
the cyclic group~$C_5$.

Since $C$ and $D$ are nontrivial twists of one another 
over the quadratic and cubic extensions of $\F_q$, 
Theorem~\ref{T:automorphism} says that the geometric automorphism groups
of $C$ and $D$ must have order divisible by~$12$.  We see immediately
that the only possible reduced geometric automorphism groups are $D_6$, 
$D_{12}$, and~$S_4$.  To prove statement (a), we must show that $S_4$ 
cannot occur when $-2$ is a square in $\F_q$ and that $D_{12}$ cannot
 occur when $-3$ is not a square in $\F_q$.

Igusa shows that in fact there is exactly one genus-$2$ curve over 
$\Fbar_q$ whose reduced automorphism group is $S_4$.  This curve can 
always be defined over $\F_q$; one model for it is 
\[
y^2 = x^6 - 5x^4 - 5x^2 + 1.
\]
Call this model $X$, and let $a$ be a square root of $-2$ in $\Fbar_q$.
The geometric automorphism group $G$ of $X$ is generated by the 
automorphisms $\alpha$, $\beta$, $\gamma$, and $\delta$ defined by
\begin{align*}
\alpha(x,y) &= \left( \frac{x+1-a}{(-a-1)x+1}, 
                       \frac{8y}{((a+1)x-1)^3} \right)\\
\beta(x,y)  &= \left( \frac{x+1}{x-1}, \frac{2ay}{(x-1)^3} \right)\\
\gamma(x,y) &= (-x,y)\\
\delta(x,y) &= (1/x,y/x^3).
\end{align*}
The orders of these automorphisms are $3$, $4$, $2$, and $2$, 
respectively, and $\beta^2$ is the hyperelliptic involution~$\iota$.  Clearly
all of the geometric automorphisms of $X$ are defined over $\F_q$ if 
$-2$ is a square in $\F_q$; otherwise, there are only $8$ elements of 
$G$ defined over $\F_q$, namely the subgroup generated by $\beta^2$, 
$\gamma$, and $\delta$.  Note that no element of $G$ has order $12$, 
because no element of $\Gb\cong S_4$ has order $6$.

Now suppose that $-2$ is a square in $\F_q$, and suppose that we have
curves $C$ and $D$ over $\F_q$ that are quadratic and cubic twists of
one another and that have reduced geometric automorphism groups 
isomorphic to $S_4$.  Then $C$ and $D$ must both be $\Fbar_q$-twists 
of~$X$.  By Lemma~\ref{L:cohomology}, the curves $C$ and $D$ correspond
to conjugacy classes in $G$, say the conjugacy classes of elements $u$ 
and $v$ respectively.  Our assumptions on $C$ and $D$ imply that 
$u^2\sim v^2$ and $u^3\sim v^3$.  Since $u^2$ and $v^2$ have the same
order, as do $u^3$ and $v^3$, we see that $u$ and $v$ have the same
order.  If this order is a power of $2$ then $u^3\sim v^3$ implies that
$u\sim v$.  If this order is $3$ then $u^2\sim v^2$ implies that 
$u\sim v$.  The only other possibility is that this order is $6$. 
Since $\Gb\cong S_4$ has no elements of order $6$, if $u$ and $v$ have
order $6$ then $u^3 = v^3 = \iota$.  Also, $S_4$ has only one conjugacy
class of elements of order $3$, so if $u$ and $v$ have
order $6$ then either $u\sim v$ or $u\sim \iota v = v^4$.  But
$v^4$ has order $3$, so we must have $u\sim v$.
In every case we have $u\sim v$, so $C$ and $D$ must be
isomorphic to one another over $\F_q$.   Thus, $S_4$ is not a
possibility when $-2$ is a square in $\F_q$.

Igusa also shows that there is exactly one genus-$2$ curve over
$\Fbar_q$ whose reduced automorphism group is $D_{12}$.  This curve can
always be defined over~$\F_q$; one model for it is 
\[
y^2 = x^6 + 1.
\]
Call this model $X$, and let $\omega$ be a primitive cube root of $1$ in
$\Fbar_q$.  The geometric automorphism group $G$ of $X$ is generated by
the automorphisms $\alpha$, $\beta$, $\gamma$, and $\iota$ defined by
\begin{align*}
\alpha(x,y) &= (\omega x, y)\\               
\beta(x,y)  &= \left( \frac{1}{x}, \frac{y}{x^3} \right)\\
\gamma(x,y) &= (-x,y)\\
\iota(x,y) &= (x,-y).
\end{align*}
The orders of these automorphisms are $3$, $2$, $2$, and $2$, 
respectively.  Clearly all of the geometric automorphisms of $X$ are
defined over $\F_q$ if $-3$ is a square in $\F_q$; otherwise, there are
only $8$ elements of $G$ defined over $\F_q$, namely the subgroup 
generated by $\beta$, $\gamma$, and $\iota$. 

Now suppose that $-3$ is not a square in $\F_q$.   If $C$ and $D$ are
curves over $\F_q$ that are quadratic and cubic twists of one another
and that have reduced geometric automorphism groups isomorphic 
to~$D_{12}$, then $C$ and $D$ are both $\Fbar_q$-twists of~$X$.  Let $c$
and $d$ be cocycles that represent the classes in 
$H^1(\Gal \Fbar_q/\F_q ,G)$ giving rise to $C$ and $D$, and let $u$ and
$v$ be the images of Frobenius under the cocycles $c$ and $d$, 
respectively.  

Let $\phi$ be the element of $\Aut G$ that gives the action of the
Frobenius $\varphi \in \Gal \Fbar_q/\F_q$ on~$G$, and let 
$A = G \rtimes \langle\phi\rangle$.  Using Lemma~\ref{L:semidirect},
we see that the elements $\tilde{u}=(u,\phi)$ and $\tilde{v}=(v,\phi)$
of $A$ have conjugate squares and conjugate cubes.

Using the explicit description of $G$ given above, it is a straightforward
matter to show that $\tilde{u}$ and $\tilde{v}$ are conjugate to one another.
(One can simply enumerate all pairs of elements of $A$ whose squares and
cubes are conjugate to one another, and verify that the elements themselves
are conjugate to one another.)  By  Lemma~\ref{L:semidirect} we see that 
the cocycles $c$ and $d$ are cohomologous, so the curves $C$ and $D$ are
isomorphic to one another over $\F_q$.

Thus we see that when $-3$ is not a square in $\F_q$, there do not exist
nonisomorphic curves $C$ and $D$ over $\F_q$ that become isomorphic to
one another over $\F_{q^2}$ and $\F_{q^3}$ and that have with geometric
reduced automorphism groups isomorphic to $D_{12}$.

This completes the proof of statement (a).

Suppose that the characteristic of $\F_q$ is~$5$.  Then Igusa determines
that the possible reduced automorphism groups are the trivial group, 
$C_2$, $D_6$, $D_4$, and $\PGL_2(\F_5)\cong S_5$.  
Theorem~\ref{T:automorphism} shows that the only possible reduced 
automorphism groups for curves $C$ and $D$ as in the statement of 
Theorem~\ref{T:genus2} are $D_6$ and $S_5$.  To complete the proof of 
statement~(b) we must show that $S_5$ cannot occur when $q$ is an odd
power of~$5$.

Let $X$ be the curve $y^2 = x^5 - x$.  The double cover $X\to\PP^1$
is ramified at $\PP^1(\F_5)$, so the reduced automorphism group of $X$
is $\PGL_2(\F_5)\cong S_5$.  The reduced automorphisms are defined 
over~$\F_5$, but the automorphisms of $X$ are not all defined 
over~$\F_5$:  lifting the reduced automorphism $x\mapsto -2/x$ requires
a square root of~$-2$.

Igusa shows that up to geometric isomorphism, $X$ is the only genus-$2$
curve in characteristic $5$ with reduced automorphism group~$S_5$, so if
there are examples of pairs of curves $C$ and $D$ as in the statement of
the theorem that have geometric reduced automorphism group $S_5$, they 
will have to be twists of~$X$.  Our work in Section~\ref{S:twists} shows
that the existence of such $C$ and $D$ is really a question about
$H^1(\Gal \Fbar_q/\F_q,\Aut X_{\Fbar_q})$.  Now, this cohomology set
\emph{does not depend on $q$}, so long as $q$ is an odd power of~$5$.
Thus, to show that no $C$ and $D$ exist for an arbitrary odd power 
of~$5$, it will suffice to show that there are no such $C$ and $D$ 
over~$\F_5$.  

One can easily check that the curves $y^2 = f$ over~$\F_5$, with $f$
taken from the set
\[
   \{x^5 - x, x^5 - 2x, x^5 + x, x^5 - x + 1, x^5 - x + 2,
    x^6 - 2, x^6 - x + 1, x^6 - x + 2\},
\]    
are all twists of $X$ and are all pairwise nonisomorphic.  It is also
easy to check these curves cover all $\F_5$-isomorphism classes of
twists of $X$, because the sum of the inverses of the cardinalities of
their groups of rational automorphisms is equal to $1$ 
(see Lemma~\ref{L:weight}).  Then one can verify that no two
of these curves become isomorphic to one another over both $\F_{25}$ 
and~$\F_{125}$.  (This last computation is not hard; simply by looking 
at the fields of rationality of the Weierstrass points of the curves, 
one finds that the only possible $\{C,D\}$ pair is 
\[
\{y^2 = x^5 - x + 1, y^2 = x^5 - x + 2\},
\]
and these curves have different numbers of points over $\F_{125}$.)
This proves statement~(b).

Suppose that the characteristic of $\F_q$ is~$2$.  Again Igusa lists the
possible reduced automorphism groups, and the only group on the list 
whose order is divisible by $3$ is the group $D_6$.  This proves 
statement~(c).

The fact that the groups that we have not excluded actually
do occur follows from Examples~\ref{X:D6}--\ref{X:S5} below.
\end{proof}

\begin{example}
\label{X:D6}
\emph{The group $D_6$ in characteristic greater than~$3$.}

Suppose the characteristic of $\F_q$ is greater than~$3$.  We first show
that there is a genus-$2$ curve over $\F_q$ whose reduced automorphism
group is $D_6$ and that has automorphisms that are not defined
over~$\F_q$.

Suppose $t \in \F_q$ satisfies
\begin{equation}
\label{EQ:forbidden-t1}
t
(t-1)
(t+1)
(t^2-t+1)\neq 0.
\end{equation}
Let $n = t^2 - t + 1$, let $a_4 = -3(t^4+t^2+1)/t$, let
$a_3 = 2(3t^5 - 2t^4 + 3t^3 + 3t^2 - 2t + 3)/t$, and let $f$ be the 
polynomial
\[
f = x^6 + a_4 x^4 + a_3 x^3 + n a_4 x^2 + n^3.
\]
One can check that Inequality~\eqref{EQ:forbidden-t1} implies that the 
discriminant of $f$ is nonzero.  Furthermore, one can check that the 
automorphisms $\bar{\alpha}(x) = n/x$ and 
$\bar{\beta}(x) = (tx-n)/(x-1)$ of $\PP^1$ permute the roots of~$f$.
The automorphisms $\bar{\alpha}$ and $\bar{\beta}$ generate a group 
isomorphic to $D_6$, so the reduced automorphism group of the genus-$2$
curve $X$ defined by $y^2 = f$ contains~$D_6$.

Igusa~\cite{igusa}*{\S8} shows that for odd prime powers $q$ 
there are at most two curves over $\Fbar_q$ whose reduced automorphism 
groups contain $D_6$ and are larger than $D_6$; one can compute that the
Igusa invariants of these curves are
\[
[20{\col}30{\col}{-20}{\col}{-325}{\col}64]
\]
and 
\[
[120{\col}330{\col}{-320}{\col}{-36825}{\col}11664].
\]
One can show that as long as 
\begin{equation}
\label{EQ:forbidden-t2}
(t^2 + t + 1)
(3t^2 - 2t + 3)
(t^2 - 4t + 1)
\neq 0
\end{equation}
the curve $X$ does not have these Igusa invariants, so in this case its
reduced automorphism group is exactly $D_6$.

The automorphism $\bar{\beta}$ of $\PP^1$ automatically lifts to an
$\F_q$-defined automorphism of $X$, but $\bar{\alpha}$ will only lift to
an $\F_q$-defined automorphism if $n$ is a square in $\F_q$.  We will 
show that when $q\ne 7$ we can find a $t\in\F_q$ that satisfies 
Inequalities~\eqref{EQ:forbidden-t1} and~\eqref{EQ:forbidden-t2} and such
that $n$ is not a square.

Let $n_0$ be an arbitrary nonsquare in $\F_q$.  The curve 
$x^2 - x + 1 - n_0 y^2 = 0$ over $\F_q$ is a nonsingular conic, so it 
has at least $q-1$ rational points in the affine plane.  At most $2$ of
these points have $y$-co\"ordinate equal to $0$, so there are at least
$(q-3)/2$ values of $t$ in $\F_q$ such that $t^2 - t + 1$ is equal to a 
nonsquare in $\F_q$.   At most one of these values (namely $t=-1$)
fails to satisfy Inequality~\eqref{EQ:forbidden-t1}, 
because the other values of $t$ that fail to satisfy
Inequality~\eqref{EQ:forbidden-t1} do not have the property that 
$t^2-t+1$ is a nonsquare.  At most six values of $t$ fail to 
satisfy Inequality~\eqref{EQ:forbidden-t2}.  Thus, as
long as $(q-3)/2 > 7$ we are assured that there is a value of $t$ in 
$\F_q$ such that the curve $X$ constructed above has reduced 
automorphism group equal to $D_6$ and has automorphisms that are not 
defined over $\F_q$.

For the primes $5$, $11$, $13$, and $17$, the value $t=-8$ satisfies 
Inequalities~\eqref{EQ:forbidden-t1} and~\eqref{EQ:forbidden-t2}, and 
$t^2 - t + 1 = 73$ is a nonsquare modulo these primes.

For $q = 7$, we consider the curve $X$ defined by 
\[
y^2 = x^6 + x^5 - 3x^4 - 2x^2 + 2x - 1,
\]
whose Igusa invariants are \hbox{$[ 0: 4: 4: 3: 4 ]$}. The reduced
automorphism group of $X$ contains the automorphisms 
$\bar{\alpha}(x) = 3/x$ and $\bar{\beta}(x) = (2x-3)/(x-1)$, which 
generate a group isomorphic to $D_6$.  The reduced automorphism group is
no larger than this because the Igusa invariants of $X$ are not equal to
those of the two curves with larger groups, and since $3$ is not a 
square in $\F_7$ the automorphism $\bar{\alpha}$ does not lift to an
$\F_7$-rational automorphism of~$X$.

Thus for every $\F_q$ of characteristic greater than $3$ we know there 
is a genus-$2$ curve $X$ over $\F_q$ whose reduced automorphism group
is isomorphic to $D_6$ and is generated by an automorphism 
$\bar{\alpha}$ of order $2$ that does not lift to a rational 
automorphism of $X$ and an automorphism $\bar{\beta}$ of order $3$ that
does lift rationally.

Let $\alpha$ and $\beta$ be lifts of $\bar{\alpha}$ and $\bar{\beta}$ to
$G = \Aut X_{\Fbar_q}$.  Note that $\alpha$ and $\beta$ generate a group
isomorphic to $D_6$, and $G$ is the product of this group with the
order-$2$ subgroup containing the hyperelliptic involution $\iota$.  Let
$\varphi \in \Gal \Fbar_q/\F_q$ be the $q$-power Frobenius, so that
$\alpha^\varphi = \iota \alpha$.

Let $c\in H^1(\Gal \Fbar_q/\F_q, G)$ be the cocycle that sends
$\varphi$ to $\beta$, and let $d$ be the cocycle that sends $\varphi$ to
$\beta^2$.  We claim that $c$ and $d$ are not cohomologous.  To verify
this, we must show that there is no $\gamma\in G$ with
$\beta^2 = \gamma^{-1} \beta \gamma^\varphi.$  Suppose we take an 
arbitrary $\gamma$ in $G$ and write it as 
$\gamma = \beta^i \alpha^j \iota^k$.  Then 
$\gamma^\varphi = \beta^i \alpha^j \iota^{j+k}$, so
\begin{align*}
\gamma^{-1} \beta \gamma^\varphi 
 &= (\iota^{-k} \alpha^{-j} \beta^{-i}) \beta 
                                       (\beta^i \alpha^j \iota^{j+k})\\
 &= \alpha^{-j} \beta  \alpha^j \iota^j\\
 &= \begin{cases}
      \beta          & \text{\quad if $j=0$;}\\
      \iota \beta^2  & \text{\quad if $j=1$.}
    \end{cases}
\end{align*}
Thus $c$ and $d$ are not cohomologous.

Consider the cocycles in  $H^1(\Gal \Fbar_q/\F_{q^2}, G)$ induced from
$c$ and $d$; these are the cocycles that send $\varphi^2$ to
$\beta \beta^\varphi = \beta^2$ and to
$(\beta^2)(\beta^2)^\varphi = \beta$, respectively.  Since 
$\beta^2 = \alpha^{-1} \beta \alpha^{\varphi^2}$, we see that these
cocycles are cohomologous to one another.

Next consider the cocycles in  $H^1(\Gal \Fbar_q/\F_{q^3}, G)$ induced
from $c$ and $d$; these are the cocycles that send $\varphi^3$ to
$\beta \beta^\varphi \beta^{\varphi^2} = 1$ and 
$(\beta^2)(\beta^2)^\varphi(\beta^2)^{\varphi^2} = 1$.  Clearly these 
are cohomologous to one another.

It follows that if we let $C$ and $D$ be the twists of $X$ corresponding
to the cohomology classes of $c$ and $d$, then $C$ and $D$ are not
isomorphic to one another, but become isomorphic to one another over the
quadratic and cubic extensions of $\F_q$.  Furthermore, their geometric
reduced automorphism groups are isomorphic to $D_6$.
\end{example}

\begin{example}
\label{X:D6-char2}
\emph{The group $D_6$ in characteristic~$2$.}

Suppose $\F_q$ is a finite field of characteristic~$2$. Let $a\in \F_q$
be an element whose trace to $\F_2$ is $1$, and let $b\in \F_{q^2}$ be
an element with $b^2 + b = a$.  Note that $b\not\in\F_q$, because $a$ 
has trace~$1$.

Let $X$ be the curve 
\[
y^2 + y = a\left(x + \frac{1}{x} + \frac{1}{x+1}\right).
\]
The reduced automorphism group of $X$ contains the automorphisms 
$\bar{\alpha}(x) = x + 1$ of order $2$ and $\bar{\beta}(x) = 1/(x+1)$ of
order~$3$, and Igusa shows that the $D_6$ generated by these 
automorphisms is the full reduced automorphism group of~$X$.  Note that
$\bar{\beta}$ lifts to the $\F_q$-rational automorphism 
\[
(x,y) \mapsto \left(\frac{1}{x+1},y\right)
\]
of $X$, but that  $\bar{\alpha}$ only lifts to the automorphisms 
$(x,y)\mapsto(x+1,y+b)$ and $(x,y)\mapsto(x+1,y+b+1)$, which are not
defined over~$\F_q$.

Now we are in the same group-theoretical situation as we were in
Example~\ref{X:D6}, and the same argument shows that we can find 
nonisomorphic twists $C$ and $D$ of $X$ that become isomorphic to one 
another over $\F_{q^2}$ and $\F_{q^3}$.
\end{example}

\begin{example}
\label{X:D12}
\emph{The group $D_{12}$ in characteristic greater than~$5$, when
$-3$ is a square.}

Suppose $\F_q$ has characteristic greater than $5$ and suppose $-3$ is a
square in $\F_q$, so that $q\equiv 1 \bmod 3$.  Let $g$ be a generator 
for $\F_q^*$.  An argument using Lemma~\ref{L:kummer} shows that 
the two curves $y^2 = x^6 + g$ and $gy^2 = x^6 + g$ over $\F_q$ are not isomorphic 
to one another, but $(x,y)\mapsto(x,y/g^{1/2})$ gives an isomorphism over $\F_{q^2}$,
and $(x,y)\mapsto(g^{1/3}/x, y/x^3)$ gives an isomorphism over $\F_{q^3}$.
Furthermore, the two curves
are geometrically isomorphic to the curve $y^2 = x^6 + 1$, which in
characteristic greater than $5$ has reduced automorphism group $D_{12}$.
\end{example}

\begin{example}
\label{X:S4}
\emph{The group $S_4$ in characteristic greater than~$5$, when $-2$ is
not a square.}

Suppose $\F_q$ has characteristic greater than $5$ and suppose $-2$ is 
not a square in $\F_q$.  Let $X$ be the curve 
\[
y^2 = x^5 - x
\]
over $\F_q$.  Then the geometric reduced automorphism group of $X$ is
isomorphic to $S_4$.  (Perhaps more suggestively, we can say that the 
geometric reduced automorphism group is isomorphic to the octahedral 
group, the octahedron in question being the one in $\PP^1_\BC$ whose
vertices are the roots of $z^5-z$ in $\BC$, together with $\infty$.)

Let $\zeta$ be a primitive $8$th root of unity in $\F_{q^2}$ and let
$i = \zeta^2$.  The fact that $-2$ is not a square in 
$\F_q$ implies that $q$-th power raising sends $\zeta$ to either 
$\zeta^5$ or $\zeta^7$.  Then the reduced automorphism group of $X$,
viewed as a subgroup of $\PGL_2(\Fbar_q)$, consists of the elements
\[
  \left\{ i^a z^b \ \big|\  a\in \{0,1,2,3\}, b\in\{-1,1\}, 
               z\in \left\{x, \frac{x-1}{x+1}, \frac{x-i}{x+i}\right\} 
  \right\}.
\]  

Let $c$ be the cocycle in $H^1(\Gal \Fbar_q/\F_q,G)$ that sends the
Frobenius $\varphi\in\Gal \Fbar_q/\F_q$ to the automorphism
\[
\alpha: (x,y) \mapsto \left( \frac{x + i}{x - i},
                       \frac{2+2i}{(x - i)^3}\cdot y\right),
\]                       
and let $d$ be the cocycle that sends $\varphi$ to
\[
\beta: (x,y) \mapsto \left( \frac{ix - i}{x + 1},
                       \frac{2-2i}{(x + 1)^3}\cdot y\right).
\]
One can check that no matter whether $\varphi(\zeta)$ is $\zeta^5$ or
$\zeta^7$, these two cocycles are not cohomologous.  However, their 
images in  $H^1(\Gal \Fbar_q/\F_{q^2},G)$ \emph{are} cohomologous; 
indeed, if we let $\gamma$ be the automorphism
\[
\gamma: (x,y) \mapsto   \left( \frac{x - 1}{x + 1},
                       \frac{\zeta(2-2i)}{(x + 1)^3}\cdot y\right),
\]
then we have
\[
\alpha \alpha^\varphi = 
  \gamma^{-1} \beta \beta^\varphi \gamma^{\varphi^2}.
\]  
Also, the images of $c$ and $d$ in $H^1(\Gal \Fbar_q/\F_{q^3},G)$ are
also cohomologous, and in fact
\[
\alpha \alpha^\varphi \alpha^{\varphi^2} = 
   \beta  \beta^\varphi  \beta^{\varphi^2}.
\]

Therefore, the twists of $X$ corresponding to $c$ and $d$ give us the
example we want.
\end{example}

\begin{example}
\label{X:S5}
\emph{The group $S_5$ in characteristic~$5$, when $-2$ is a square.}

We are working over $\F_q$, where $q$ is an even power of $5$.   Let $g$ be a
generator of the multiplicative group $\F_q^*$, and consider
the two curves $y^2 = x^6 + g$ and $gy^2 = x^6 + g$.  The same argument
we gave in Example~\ref{X:D12} shows that these two curves are not isomorphic 
to one another, but that they become isomorphic over $\F_{q^2}$ and $\F_{q^3}$.
Since they are both twists of $y^2 = x^5 - x$, they have geometric reduced 
isomorphism groups isomorphic to $S_5.$
\end{example}

Next we prove Theorem~\ref{T:g2p3}.

\begin{proof}[Proof of Theorem~\textup{\ref{T:g2p3}}]
Igusa shows that the possible reduced automorphism groups of genus-$2$
curves in characteristic~$3$ are the trivial group, $C_2$, $D_6$, $D_4$,
$S_4$, and~$C_5$.  Theorem~\ref{T:automorphism} shows that the only 
possibilities in our situation are $D_6$ and $S_4$.  But our proof of
statement~(a) of Theorem~\ref{T:precise2} shows that $S_4$ is not a 
possibility, so all we have to show is that $D_6$ is also impossible.
By Theorem~\ref{T:automorphism-plus-galois}, it will be enough for us to
show that every curve $C$ over a finite field $\F_q$ of characteristic 
$3$ whose reduced automorphism group is $D_6$ has a twist that has all 
of its automorphisms defined over~$\F_q$.

So let $q$ be a power of $3$ and let $C$ be a curve over $\F_q$ with 
geometric reduced automorphism group $D_6$.  From Igusa's 
parametrization of such curves (over~$\Fbar_q$) one can calculate that 
the Igusa invariants of such a curve are 
\hbox{$[1{\col}0{\col}t{\col}t{\col}t]$} for some nonzero $t\in \F_q$.
(Here we are using the invariants 
\hbox{$[J_2{\col}J_4{\col}J_6{\col}J_8{\col}J_{10}]$} from~\cite{igusa},
so the vector of Igusa invariants lives in weighted projective space, 
where the $i$-th co\"ordinate has weight $i$.)  In fact, we also have
$t\neq -1$, because that value of $t$ comes from the curve with reduced
automorphism group $S_4$.  So it will suffice for us to show that given 
any $t\in \F_q$ with $t\neq0$ and $t\neq -1$, there is a curve with 
Igusa invariants \hbox{$[1{\col}0{\col}t{\col}t{\col}t]$} defined over
$\F_q$ with all of its automorphisms defined over~$\F_q$.

Let $X$ be the curve $y^2 = (x^3-x)^2 - t^{1/3}$.  One can compute that
the Igusa invariants of $X$ are 
\hbox{$[1{\col}0{\col}t{\col}t{\col}t]$}.  But for every $a\in \F_3$, we
 have automorphisms
\[
(x,y) \mapsto (\pm x + a, \pm y)
\]
of $X$, so all $12$ of the  automorphisms of $X$ are defined 
over~$\F_q$.
\end{proof}

We close by proving Theorem~\ref{T:genus2bigger}.

\begin{proof}[Proof of Theorem~\textup{\ref{T:genus2bigger}}]
Let  $r$, $s$, $C$, and $D$ be as in the statement of 
Theorem~\ref{T:genus2bigger}.  From  Theorem~\ref{T:automorphism} we know
that $rs$ divides the order of the geometric automorphism
groups of $C$ and $D$, but that this group has order larger than $rs$.
According to Igusa's enumeration~\cite{igusa}*{\S8} of the 
possible geometric reduced automorphism groups of genus-$2$ curves,
we find that there are only four possible reduced
automorphism groups to consider:
\begin{enumerate}
\item The group $D_{12}$, which occurs in characteristic larger than $5$
      as the reduced automorphism group of the curve $y^2 = x^6 + 1$,
      and which occurs in no other way;
\item The group $S_4$, which occurs in characteristics other than $2$ and $5$
      as the reduced automorphism group of the curve $y^2 = x^6 - 5x^4 - 5x^2 + 1$,
      and which occurs in no other way;
\item The group $S_5$, which occurs in characteristic $5$ as the
      reduced automorphism group of the curve $y^2 = x^5 - x$, and
      which occurs in no other way; and
\item A group of the form $C_2^4\rtimes C_5$, which occurs in characteristic $2$
      as the reduced automorphism group of the curve $y^2 + y = x^5$,
      and which occurs in no other way.
\end{enumerate}
Note that the equations we give for these four curves show that they can be defined over
the appropriate prime field $\F_p$; also the full automorphism groups of the first three
curves can be defined over, at worst, the quadratic extension of the ground field. 
(For the fourth curve, the full automorphism group may require a quadratic or a quartic
extension of the ground field.)

For any given $\{r,s\}$ pair and any finite base field $K$, Theorem~\ref{T:cocycles}
says that to see whether we can find two twists of one of these four curves that
satisfy the conclusion of Theorem~\ref{T:main}, we must make simple computations 
in the cohomology sets $H^1(\hat{\Z},G,\alpha^d)$ for various values of $d$, 
where $G$ is the automorphism group of the given curve and $\alpha$ is the 
automorphism of $G$ induced by Frobenius.  But we know what the possible 
groups $G$ are, and we know what the possible Frobenius actions are, and we
know which $\{r,s\}$ pairs we have to consider, so determining whether any 
$\{r,s\}$ pairs give rise to examples is a finite and well-defined computation.

We have provided examples of similar computations in the proof of Theorem~\ref{T:genus2}.
We leave the ones required here to the reader (and his or her favorite symbolic
manipulation program).  But we will at least note that the computation for $S_4$ can be 
skipped: for this group, the only possibilities for $\{r,s\}$ are $\{3,4\}$ and $\{3,8\}$,
and if $C$ and $D$ over $\F_q$ give an example for such a pair, then $C$ and $D$, when
base extended to $\F_{q^2}$ or $\F_{q^4}$, given an example with $\{r,s\} = \{2,3\}$ 
over a field of square order.  However, Theorem~\ref{T:genus2} shows that no
such examples exist.

The result of the computation that we have outlined above is that the only reduced
automorphism group that gives rise to an example is the group $S_5$, with the
nontrivial action of Frobenius, and with $\{r,s\} = \{2,5\}$.
Furthermore, there is a unique pair of elements of $H^1(\hat{\Z},G,\alpha)$
that gives rise to an example: identifying $G$ with the automorphism group
of $y^2 = x^5 - x$, the pair corresponds to the classes in $H^1(\hat{\Z},G,\alpha)$
of the automorphisms $(x,y)\mapsto(x+1,y)$ and $(x,y)\mapsto(x+2,y)$.

Since the action of Frobenius on the automorphism group is nontrivial, the
examples occur only over fields of the form $K = \F_q$ where $q$ is an odd power of~$5$.
Let $\sigma$ be the $q$-power Frobenius of $\Kb$.  The first of the cohomology
classes identified above corresponds to the twists of $y^2 = x^5 - x$ of the form
$y^2 = x^5 - x + a$, where $a\in K$ has the property that $b^\sigma - b = \pm1$
for all $b\in \Kb$ with $b^5 - b = a$.  The second cohomology class corresponds
to the twists of the form $y^2 = x^5 - x + a$, where  $a\in K$ has the property 
that $b^\sigma - b = \pm2$ for all $b\in \Kb$ with $b^5 - b = a$.
Thus, we can always put the twists $C$ and $D$ in the form given in 
the statement of the theorem, and any two curves as in the statement of
the theorem are twists of $y^2 = x^5 - x$ with minimal isomorphism extensions
of degrees $2$ and $5$ over $K$.
\end{proof}

\section{Galois cohomology of connected algebraic groups}
\label{S:related}

Suppose $C$ and $D$ are two curves over a field $K$.  Let $X$ be the 
algebraic variety $\Isom(C,D)$, and let $L$ and $M$ be finite
extensions of~$K$ whose degrees over $K$ are coprime to one another.
Question~\ref{Q:oldmain} can be phrased in terms of $X$:
\begin{question}
\label{Q:general}
Suppose $X(L)$ and $X(M)$ are nonempty.  Must $X(K)$ be nonempty\/\textup{?}
\end{question}
Note that the hypothesis that $X(L)$ is nonempty shows that
the variety $X$ is a torsor for the algebraic group~$\Aut C$.

Several authors have considered Question~\ref{Q:general} in the
case where $X$ is a torsor for a connected linear algebraic group 
(see~\cite{totaro} and the references therein).  
Totaro considers a more
general version of the question, which he phrases as an assertion:

\begin{quotation}
\noindent
Let $k$ be a field, let $G$ be a smooth connected linear algebraic group
over~$k$, and let $X$ be a quasi-projective variety that is a
homogeneous space for $G$.  Suppose that there is a zero-cycle (not
necessarily effective) of degree $d > 0$ on $X$.  Then $X$ has a closed
point of degree dividing $d$, which moreover can be chosen to be \'etale
(i.e.~separable) over $k$.~\cite{totaro}*{Question~0.2}
\end{quotation}

Totaro points out that the question has a positive answer in the special case 
$d=1$ for torsors over any split simple group other than $E_8$, by work of 
Bayer-Fluckiger and Lenstra~\cite{bayer-fluckiger-lenstra} and
Gille~\cite{gille};  Garibaldi and Hoffmann~\cite{garibaldi-hoffmann} proved 
that the answer is again positive for torsors over several other groups, 
including some non-split ones.  The answer for $E_8$ is not known. 
Florence~\cite{florence} and Parimala~\cite{parimala} have shown that the 
answer can be `no' if $X$ is not a $G$-torsor.

Serre makes the following remark:
\begin{quotation}
\noindent
Soit $G$ un groupe alg\'ebrique sur $k$, et soient $x,y$ deux
\'el\'ements de $H^1(k,G)$.  Supposons que $x$ et $y$ aient m\^eme
images dans $H^1(k',G)$ et dans $H^1(k'',G)$ o\`u $k'$ et $k''$ sont 
deux extensions finies de $k$ de degr\'es premiers entre eux (par 
exemple $\indexin{k'}{k} = 2$ et $\indexin{k''}{k} = 3$).
\emph{Ceci n'entra\^{\i}ne pas} $x = y$ contrairement \`a ce qui se 
passe dans le cas ab\'elien ; on peut en construire des exemples, en 
prenant $G$ non connexe ; j'ignore ce qu'il en est lorsque $G$ est 
connexe.~\cite{serre:RC9091}*{p.~117}
\end{quotation}
\begin{quotation}
\noindent
[Let $G$ be an algebraic group over $k$, and let $x$ and $y$ be two 
elements of $H^1(k,G)$.  Suppose that $x$ and $y$ have the same images 
in $H^1(k',G)$ and in $H^1(k'',G)$, where $k'$ and $k''$ are finite 
extensions of $k$ whose degrees are coprime to one another (for example,
$\indexin{k'}{k} = 2$ and $\indexin{k''}{k} = 3$).  
\emph{It does not follow} that $x=y$, as opposed to what happens in the
abelian case; one can construct examples by taking $G$ to be not 
connected; I do not know what happens when $G$ is connected.]
\end{quotation}

Suppose the group $G$ in Serre's remark is a connected linear group, and
let $A$ and $B$ denote the twists of $G$ corresponding to $x$ and~$y$.
Let $X = \Isom(A,B)$, so that $X$ is a $G$-torsor.  Since $x$ and $y$
have the same images in $H^1(k',G)$ and in $H^1(k'',G)$, the variety $X$
has rational points over $k'$ and $k''$, so it has $k$-rational 
zero-cycles of degrees $\indexin{k'}{k}$ and $\indexin{k''}{k}$.  Since
these field degrees are coprime to one another, $X$ has a zero-cycle
of degree~$1$.  Thus, in this case, Totaro's question (``Does $X$ have
a rational point?'') is equivalent to Serre's implied question (``Does 
$x = y$?'').

As Serre mentions, examples can be constructed when $G$ is not 
connected.  In this paper we considered the case of the algebraic
group $G = \Aut C$ for a curve~$C$; when $C$ has genus at least $1$,
this group is not connected when it is nontrivial.





\begin{bibdiv}
\begin{biblist}

\bib{aschbacher}{book}{
   author={Aschbacher, Michael},
   title={Finite group theory},
   series={Cambridge Studies in Advanced Mathematics},
   volume={10},
   publisher={Cambridge University Press},
   place={Cambridge},
   date={1986},
   pages={x+274},
   isbn={0-521-30341-9},
}
	
\bib{bayer-fluckiger-lenstra}{article}{
   author={Bayer-Fluckiger, E.},
   author={Lenstra, H. W., Jr.},
   title={Forms in odd degree extensions and self-dual normal bases},
   journal={Amer. J. Math.},
   volume={112},
   date={1990},
   number={3},
   pages={359--373},
   issn={0002-9327},
}

\bib{florence}{article}{
   author={Florence, Mathieu},
   title={Z\'ero-cycles de degr\'e un sur les espaces homog\`enes},
   language={French},
   journal={Int. Math. Res. Not.},
   date={2004},
   number={54},
   pages={2897--2914},
   issn={1073-7928},
}	
	
\bib{garibaldi-hoffmann}{article}{
   author={Garibaldi, Skip},
   author={Hoffmann, Detlev W.},
   title={Totaro's question on zero-cycles on $G\sb 2$, $F\sb 4$ 
          and $E\sb 6$ torsors},
   journal={J. London Math. Soc. (2)},
   volume={73},
   date={2006},
   number={2},
   pages={325--338},
   issn={0024-6107},
}

\bib{vdgeer-vdvlugt}{article}{
   author={van der Geer, Gerard},
   author={van der Vlugt, Marcel},
   title={Supersingular curves of genus $2$ over finite fields of
          characteristic $2$},
   journal={Math. Nachr.},
   volume={159},
   date={1992},
   pages={73--81},
   issn={0025-584X},
}

\bib{gille}{article}{
   author={Gille, Philippe},
   title={La $R$-\'equivalence sur les groupes alg\'ebriques r\'eductifs
          d\'efinis sur un corps global},
   language={French},
   journal={Inst. Hautes \'Etudes Sci. Publ. Math.},
   number={86},
   date={1997},
   pages={199--235 (1998)},
   issn={0073-8301},
}

\bib{mod2}{article}{
    author = {Goldstein, Daniel},
    author = {Guralnick, Robert M.},
    title = {Automorphisms of modular curves II},
    status = {in preparation},
}

\bib{guralnick}{article}{
   author={Guralnick, Robert M.},
   title={Cyclic quotients of transitive groups},
   journal={J. Algebra},
   volume={234},
   date={2000},
   number={2},
   pages={507--532},
   issn={0021-8693},
}

\bib{horosevskii}{article}{
   author={Horo{\v{s}}evski{\u\i}, M. V.},
   title={Automorphisms of finite groups},
   language={Russian},
   journal={Mat. Sb. (N.S.)},
   volume={93(135)},
   date={1974},
   pages={576--587, 630},
}

\bib{howe:compositio1993}{article}{
   author={Howe, Everett W.},
   title={On the group orders of elliptic curves over finite fields},
   journal={Compositio Math.},
   volume={85},
   date={1993},
   number={2},
   pages={229--247},
   issn={0010-437X},
}

\bib{igusa}{article}{
   author={Igusa, Jun-ichi},
   title={Arithmetic variety of moduli for genus two},
   journal={Ann. of Math. (2)},
   volume={72},
   date={1960},
   pages={612--649},
   issn={0003-486X},
}

\bib{ihara}{article}{
   author={Ihara, Yasutaka},
   title={On modular curves over finite fields},
   conference={
      title={Discrete subgroups of Lie groups and applications to moduli
      (Internat. Colloq., Bombay, 1973)},
   },
   book={
      publisher={Oxford Univ. Press},
      place={Bombay},
   },
   date={1975},
   pages={161--202},
}

\bib{katz-sarnak}{book}{
   author={Katz, Nicholas M.},
   author={Sarnak, Peter},
   title={Random matrices, Frobenius eigenvalues, and monodromy},
   series={American Mathematical Society Colloquium Publications},
   volume={45},
   publisher={American Mathematical Society},
   place={Providence, RI},
   date={1999},
   pages={xii+419},
   isbn={0-8218-1017-0},
}

\bib{madden-valentini}{article}{
   author={Madden, Daniel J.},
   author={Valentini, Robert C.},
   title={The group of automorphisms of algebraic function fields},
   journal={J. Reine Angew. Math.},
   volume={343},
   date={1983},
   pages={162--168},
   issn={0075-4102},
}

\bib{parimala}{article}{
   author={Parimala, R.},
   title={Homogeneous varieties---zero-cycles of degree one versus rational
          points},
   journal={Asian J. Math.},
   volume={9},
   date={2005},
   number={2},
   pages={251--256},
   issn={1093-6106},
}
	
\bib{scharlau}{book}{
   author={Scharlau, Winfried},
   title={Quadratic and Hermitian forms},
   series={Grundlehren der Mathematischen Wissenschaften
          },
   volume={270},
   publisher={Springer-Verlag},
   place={Berlin},
   date={1985},
   pages={x+421},
   isbn={3-540-13724-6},
}

\bib{serre:LF}{book}{
   author={Serre, Jean-Pierre},
   title={Local fields},
   series={Graduate Texts in Mathematics},
   volume={67},
   note={Translated from the French by Marvin Jay Greenberg},
   publisher={Springer-Verlag},
   place={New York},
   date={1979},
   pages={viii+241},
   isbn={0-387-90424-7},
}

\bib{serre:RC9091}{article}{
   author = {Serre, Jean-Pierre},
   title = {R\'esum\'e des cours de 1990--1991},
   language={French},
   booktitle = {Annuaire du Coll\`ege de France},
   pages = {111--121},
   date = {1991},
   note = {[={\OE}uvres 153]},
}

\bib{serre:GC}{book}{
   author={Serre, Jean-Pierre},
   title={Galois cohomology},
   note={Translated from the French by Patrick Ion 
         and revised by the author},
   publisher={Springer-Verlag},
   place={Berlin},
   date={1997},
   pages={x+210},
   isbn={3-540-61990-9},
}

\bib{silverman}{book}{
   author={Silverman, Joseph H.},
   title={The arithmetic of elliptic curves},
   series={Graduate Texts in Mathematics},
   volume={106},
   publisher={Springer-Verlag},
   place={New York},
   date={1986},
   pages={xii+400},
   isbn={0-387-96203-4},
}

\bib{springer}{article}{
   author={Springer, Tonny Albert},
   title={Sur les formes quadratiques d'indice z\'ero},
   language={French},
   journal={C. R. Acad. Sci. Paris},
   volume={234},
   date={1952},
   pages={1517--1519},
}

\bib{totaro}{article}{
   author={Totaro, Burt},
   title={Splitting fields for $E\sb 8$-torsors},
   journal={Duke Math. J.},
   volume={121},
   date={2004},
   number={3},
   pages={425--455},
   issn={0012-7094},
}

\end{biblist}
\end{bibdiv}

\end{document}